\documentclass[11pt,twoside,reqno,centertags,draft]{amsart}
\usepackage{amsfonts}
\usepackage{color,enumitem,graphicx}
\usepackage[colorlinks=true,urlcolor=blue,
citecolor=red,linkcolor=blue,linktocpage,pdfpagelabels,
bookmarksnumbered,bookmarksopen]{hyperref}

  \usepackage{amsmath,amsthm,amsfonts,amssymb}
  \usepackage{ulem}
\begin{document}

\title{ Existence and concentration of ground states  to  fractional Choquard equations}
\date{}
\maketitle

\vspace{ -1\baselineskip}

{\small
\begin{center}
{\sc  Jianfu Yang} \\
Department of Mathematics,
Jiangxi Normal University\\
Nanchang, Jiangxi 330022,
People's Republic of China\\
email: jfyang200749@sina.com\\[10pt]

{\sc  Jinge Yang} \\
School of Science,
Jiangxi University of Water Resources and Electric Power\\
Nanchang, Jiangxi 330099,
People's Republic of China\\
email: Jinge Yang: jgyang2007@yeah.net\\[10pt]

\end{center}
}

\renewcommand{\thefootnote}{}
\footnote{MSC(2020): 35B38, 35B40, 35J60.}
\footnote{Key words:   Fractional Choquard equation; Ground states; Blow up; Concentration.
.}

\begin{quote}
{\bf Abstract.}\ \ In this paper, we study the nonlinear fractional Choquard equation
\begin{equation*}
			(-\Delta)^su+Vu=(|x|^{-\gamma}*|u|^2)u \quad {\rm in} \quad \mathbb{R}^N,
\end{equation*}
where $0<\gamma<4$, $0<s<1$,  $N\geq 4$ and $V\in C^1(\mathbb{R}^N)$ is a positive potential. Set $s_0=\frac {\gamma}{4}$. Under suitable assumptions  on $V$, we  prove  that   the equation  admits a nonnegative ground state solution for $s\in(s_0,1)$, whereas no ground state solution exists for $0<s\le s_0$. Furthermore,  we  show that  any ground state solution $u_s$ blows up and concentrates at a minimum point of $V$ as $s\downarrow s_0$. Finally,  up to a subsequence, the ground state solution $u_s$ converges to a ground state solution of  the classical Choquard equation   as $s\uparrow 1$.

\end{quote}

\newcommand{\N}{\mathbb{N}}
\newcommand{\R}{\mathbb{R}}
\newcommand{\Z}{\mathbb{Z}}

\newcommand{\cA}{{\mathcal A}}
\newcommand{\cB}{{\mathcal B}}
\newcommand{\cC}{{\mathcal C}}
\newcommand{\cD}{{\mathcal D}}
\newcommand{\cE}{{\mathcal E}}
\newcommand{\cF}{{\mathcal F}}
\newcommand{\cG}{{\mathcal G}}
\newcommand{\cH}{{\mathcal H}}
\newcommand{\cI}{{\mathcal I}}
\newcommand{\cJ}{{\mathcal J}}
\newcommand{\cK}{{\mathcal K}}
\newcommand{\cL}{{\mathcal L}}
\newcommand{\cM}{{\mathcal M}}
\newcommand{\cN}{{\mathcal N}}
\newcommand{\cO}{{\mathcal O}}
\newcommand{\cP}{{\mathcal P}}
\newcommand{\cQ}{{\mathcal Q}}
\newcommand{\cR}{{\mathcal R}}
\newcommand{\cS}{{\mathcal S}}
\newcommand{\cT}{{\mathcal T}}
\newcommand{\cU}{{\mathcal U}}
\newcommand{\cV}{{\mathcal V}}
\newcommand{\cW}{{\mathcal W}}
\newcommand{\cX}{{\mathcal X}}
\newcommand{\cY}{{\mathcal Y}}
\newcommand{\cZ}{{\mathcal Z}}

\newcommand{\abs}[1]{\lvert#1\rvert}
\newcommand{\xabs}[1]{\left\lvert#1\right\rvert}
\newcommand{\norm}[1]{\lVert#1\rVert}

\newcommand{\loc}{\mathrm{loc}}
\newcommand{\p}{\partial}
\newcommand{\h}{\hskip 5mm}
\newcommand{\ti}{\widetilde}
\newcommand{\D}{\Delta}
\newcommand{\e}{\epsilon}
\newcommand{\bs}{\backslash}
\newcommand{\ep}{\emptyset}
\newcommand{\su}{\subset}
\newcommand{\ds}{\displaystyle}
\newcommand{\ld}{\lambda}
\newcommand{\vp}{\varphi}
\newcommand{\wpp}{W_0^{1,\ p}(\Omega)}
\newcommand{\ino}{\int_\Omega}
\newcommand{\bo}{\overline{\Omega}}
\newcommand{\ccc}{\cC_0^1(\bo)}
\newcommand{\iii}{\opint_{D_1}D_i}

\theoremstyle{plain}
\newtheorem{Thm}{Theorem}[section]
\newtheorem{Lem}[Thm]{Lemma}
\newtheorem{Def}[Thm]{Definition}
\newtheorem{Cor}[Thm]{Corollary}
\newtheorem{Prop}[Thm]{Proposition}
\newtheorem{Rem}[Thm]{Remark}
\newtheorem{Ex}[Thm]{Example}

\numberwithin{equation}{section}
\newcommand{\meas}{\rm meas}
\newcommand{\ess}{\rm ess} \newcommand{\esssup}{\rm ess\,sup}
\newcommand{\essinf}{\rm ess\,inf} \newcommand{\spann}{\rm span}
\newcommand{\clos}{\rm clos} \newcommand{\opint}{\rm int}
\newcommand{\conv}{\rm conv} \newcommand{\dist}{\rm dist}
\newcommand{\id}{\rm id} \newcommand{\gen}{\rm gen}
\newcommand{\opdiv}{\rm div}

\vskip 0.2cm \arraycolsep1.5pt
\newtheorem{Lemma}{Lemma}[section]
\newtheorem{Theorem}{Theorem}[section]
\newtheorem{Definition}{Definition}[section]
\newtheorem{Proposition}{Proposition}[section]
\newtheorem{Remark}{Remark}[section]
\newtheorem{Corollary}{Corollary}[section]

\section{Introduction}

\bigskip

This paper is concerned with  the existence,  nonexistence  and the concentration behavior of ground state solutions for the following nonlinear fractional Choquard equation
	\begin{equation}\label{eq:1.1}\tag{$P_s$}
		(-\Delta)^su+Vu=(|x|^{-\gamma}*|u|^2)u,\quad {\rm in} \quad \mathbb{R}^N,
	\end{equation}
	where  $0<\gamma<4$, $0<s<1$, $N\geq 4$, and $V\in C^1(\mathbb{R}^N)$ is positive. The fractional Laplacian $(-\Delta)^s$  is defined on the Schwartz space  via the Fourier transform
	\[
	\mathcal{F}[(-\Delta)^su](\xi)=|\xi|^{2s} \mathcal{F}(u)(\xi),
	\]
	where
	\[
	\mathcal{F}(u)(\xi)=\int_{\mathbb{R}^N}e^{- ix\cdot \xi}u(x)\,dx
	\]
	denotes the Fourier transform of $u$.

In this work, we regard $s$ as a parameter, which is motivated by the study of semiclassical Schr\"odinger equations
	\begin{equation}\label{eq:1.3a}
		\begin{cases}
			-\varepsilon^2\Delta u +V(x)u=|u|^{p-2}u,\quad x\in\mathbb{R}^N,\\
			u(x)\to 0,\quad  |x|\to \infty.
		\end{cases}
	\end{equation}
The concentration behavior of ground states of equation \eqref{eq:1.3a} as $\varepsilon\to 0$ has been widely investigated for various classes of potentials. It is well known that concentration points are closely related to the critical points of $V$; see, for instance, \cite{FW, O}.  For further results in this direction, we refer to  \cite{ABC, AMS, BPW, BO,  PF, PF1, G, W93}  and the references therein.

Similar results have been established for fractional Schr\"odinger equations
	\begin{equation}\label{eq:1.3c}
		(-\Delta)^s u +V(x)u=|u|^{2^*_s-2-\varepsilon}u,\quad x\in\mathbb{R}^N,
	\end{equation}
where $2^*_s=\frac{2n}{n-2s}$ is the fractional Sobolev critical exponent.
The case $s=1$ was studied in \cite{PW,W}, and the fractional case $s\in (0,1)$
 was considered in \cite{CW}. Vilasi and Wang \cite{VW} investigated the fractional Choquard equation
\begin{equation}\label{fc1}
		(-\Delta)^s u +u=A_\alpha(|x|^{-(N-\alpha)}*|u|^{2_{\alpha,s}^*-\varepsilon})|u|^{2_{\alpha,s}^*-\varepsilon-2}u,
	\end{equation}
	where $2_{\alpha,s}^*=\frac{N+\alpha}{N-2s}$.
	Recently,  Yang and Yang \cite{YY}  used $s$
 as a parameter to study the concentration of ground states for nonlinear fractional elliptic problems
	\begin{equation}\label{fs}
		(-\Delta)^s u +V(x)u=|u|^{p}u,\quad x\in\mathbb{R}^N.
	\end{equation}

Motivated by \cite{YY}, we extend the related results to the fractional Choquard equation \eqref{eq:1.1}, which contains both nonlocal fractional operators and nonlocal convolution nonlinearities.

\bigskip

Throughout the paper, we assume the following condition on $V\in C^1(\mathbb{R}^N)$:
\bigskip

	$(V)$ $0<V_0\leq V(x)<V_\infty:=\lim_{x\to\infty}V(x)<+\infty$, and $x\cdot\nabla V$ is bounded on
	$\mathbb{R}^N$.
\bigskip

Under assumption $(V)$, for any $u\in H^s(\mathbb{R}^N)$, we have
	\begin{equation}\label{svh1}
		\min\{1,V_0\}\|u\|_{H^s(\mathbb{R}^N)}^2\leq \|u\|_{s,V}^2	\leq \max\{1,V_\infty\}\|u\|_{H^s(\mathbb{R}^N)}^2,
	\end{equation}
where
\[
\|u\|_{H^s(\mathbb{R}^N)}^2=\int_{\mathbb{R}^N}|(-\Delta)^{\frac s2}u(x)|^2\,dx+\int_{\mathbb{R}^N}|u(x)|^2\,dx,
\]
\[
	\|u\|_{s,V}^2=\int_{\mathbb{R}^N}|(-\Delta)^{\frac s2}u(x)|^2\,dx+\int_{\mathbb{R}^N}V(x)|u(x)|^2\,dx.
	\]
Thus, $\|u\|_{s,V}$ is an equivalent norm of $H^s(\mathbb{R}^N)$.

By the Hardy-Littlewood-Sobolev inequality and fractional Sobolev embedding,
	\[
	D_\gamma(u,u):=\iint_{\mathbb{R}^N\times\mathbb{R}^N}\frac{|u(x)|^2|u(y)|^2}{|x-y|^\gamma}dxdy
\leq C\|u\|_{H^s}^4,
	\]
which  implies that the functional
\begin{equation}\label{fu}
		I_{s,\gamma}^V(u)=\frac{\|u\|_{s,V}^2}{D_\gamma(u,u)^{\frac12}}
	\end{equation}
is well-defined on $H^s(\mathbb{R}^N)\setminus\{0\}$. The existence of ground state solutions of \eqref{eq:1.1} is equivalent to the attainability of the minimization problem
 \begin{equation}\label{ssgv}
		S_{s,\gamma}^V=\inf_{u\in H^s(\mathbb{R}^N\setminus \{0\}\}}	I_{s,\gamma}^V(u).
	\end{equation}

A ground state solution to \eqref{eq:1.1}
 is a solution with minimal energy. By Lemma 2.6 in \cite{YY}, a function $u\in H^s(\mathbb{R}^N)$ is a ground state of $(P_s)$ if and only if it is a minimizer of $S_{s,\gamma}^V$ and solves $(P_s)$ with $\|u\|_{s,V}= S_{s,\gamma}^V$.

Let $s_0=\frac {\gamma}{4}$.  We  show that  ground states exist for $s\in (s_0,1)$, and do not exist for $s\in (0,s_0]$. For the constant potential $V(x)\equiv1$, related results have been established in \cite{AS}.

\bigskip

For $s\in (s_0,1)$, our main results are as follows.
	\begin{Theorem}\label{thm1} Let $s_0<s<1$and assumption $(V)$ hold. Then
		
		$(i)$ Equation \eqref{eq:1.1} admits a nonnegative ground state solution $u_s$;
		
		$(ii)$ Any nonnegative ground state solution $u_s$ belongs to $C(\mathbb{R}^N)$ and satisfies the decay estimate
		$$0\leq u_s(x)\leq C(N,s,\gamma,V_0,u_s)(1+|x|^2)^{-\frac{N+2s}{2}}.$$
	\end{Theorem}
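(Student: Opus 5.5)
We prove (i) by comparing $S^V_{s,\gamma}$ with the limiting constant $S^{V_\infty}_{s,\gamma}$ (the infimum of \eqref{fu} with $V\equiv V_\infty$). The condition $s>s_0=\gamma/4$ means $N-\gamma>N-4s$, so $2<\frac{2N}{2N-\gamma}\cdot 2<2^*_s$. By the Hardy--Littlewood--Sobolev inequality, $D_\gamma(u,u)\le C\|u\|_{L^{4N/(2N-\gamma)}}^4$, and this exponent is subcritical. Hence the problem with constant potential $V_\infty$ is translation invariant and subcritical. The results of \cite{AS} (or a standard concentration--compactness argument) give a positive radial minimizer $w$ of $S^{V_\infty}_{s,\gamma}$.

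Since $V<V_\infty$ pointwise, we have
\[
S^V_{s,\gamma}\le I^V_{s,\gamma}(w)<I^{V_\infty}_{s,\gamma}(w)=S^{V_\infty}_{s,\gamma}.
\]
Now take a minimizing sequence $u_n$. Replace it by $|u_n|$, which is allowed because $\|\,|u|\,\|_{s,V}\le\|u\|_{s,V}$ via the Gagliardo form and $D_\gamma$ is unchanged. Normalize so that $D_\gamma(u_n,u_n)=1$; then $u_n$ is bounded in $H^s$ by \eqref{svh1}, and $u_n\rightharpoonup u$.

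Write $v_n=u_n-u$. By Brezis--Lieb type splittings,
\[
\|u_n\|_{s,V}^2=\|u\|_{s,V}^2+\|v_n\|_{s,V}^2+o(1),\qquad D_\gamma(u_n,u_n)=D_\gamma(u,u)+D_\gamma(v_n,v_n)+o(1).
\]
Since $v_n\rightharpoonup0$ and $V(x)\to V_\infty$, we get $\int (V_\infty-V)v_n^2\to0$ by local compactness. This gives $\|v_n\|_{s,V}^2\ge S^{V_\infty}_{s,\gamma}D_\gamma(v_n,v_n)^{1/2}+o(1)$. Setting $t=D_\gamma(u,u)\in[0,1]$, we obtain
\[
S^V\ge S^V t^{1/2}+S^{V_\infty}(1-t)^{1/2}.
\]
Because $S^V<S^{V_\infty}$ and $t^{1/2}+(1-t)^{1/2}\ge1$, this forces $t=1$. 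So $u$ is a nonnegative minimizer and $u_n\to u$ strongly.

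A minimizer satisfies the Euler--Lagrange equation with a Lagrange multiplier, and the homogeneity (degree 2 against degree 4) lets us scale it away. Thus $u_s=\lambda u$ solves $(P_s)$, and by Lemma 2.6 of \cite{YY} it is a ground state. The main obstacle is this dichotomy step: the Brezis--Lieb splitting for the nonlocal term $D_\gamma$, and the control of $\int (V_\infty-V)v_n^2$.

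For (ii), let $u\ge0$ be a ground state and set $K=|x|^{-\gamma}*u^2$.
\begin{enumerate}[label=(\alph*)]
\item Show $K\in L^\infty$. Split the convolution into $|x-y|<1$ and $|x-y|\ge1$. The far part is bounded by $\|u\|_2^2$. The near part is bounded by $\|u\|_{L^{q}}^2$ with $q$ slightly above $2N/(N-\gamma)$... wait, we need $|y|^{-\gamma}\in L^{r}_{loc}$ with $r<N/\gamma$. Since $u^2\in L^{p}$ for $p\le N/(N-2s)$, Hölder requires $\gamma/N<1-1/p$, i.e. $\gamma<2s$. This holds because $\gamma<4s$ only gives $\gamma<2s$ when... it does not in general, so first bootstrap integrability.
\item Bootstrap. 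From $(-\Delta)^s u+V_0u\le Ku$, use the Moser iteration for the fractional Laplacian, with test functions $u\min(u,T)^{2\beta}$, as in \cite{YY}. This gives $u\in L^p$ for all $p\in[2,\infty)$, hence $K\in L^\infty$ and then $u\in L^\infty$. Hölder continuity of $u$ follows from Silvestre's regularity for $(-\Delta)^s u=f\in L^\infty$, so $u\in C(\mathbb{R}^N)$.
\item Since $u\in L^2\cap C$ and $u$ is uniformly continuous, $u(x)\to0$ as $|x|\to\infty$. Also $K(x)\to0$ as $|x|\to\infty$, by splitting the convolution and using that the $L^2$ tail of $u$ vanishes.
\item Choose $R$ with $K\le V_0/2$ for $|x|\ge R$. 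There $u$ is a subsolution: $(-\Delta)^s u+\frac{V_0}{2}u\le0$ on $|x|>R$. Compare with the barrier $W=(1+|x|^2)^{-(N+2s)/2}$, which satisfies $(-\Delta)^s W+\frac{V_0}{2}W\ge0$ for $|x|$ large. This is the known estimate $(-\Delta)^sW\sim -c|x|^{-N-2s}$ for $W$; one can instead use the Bessel-kernel fundamental solution of $(-\Delta)^s+\frac{V_0}{2}$, whose decay is $|x|^{-N-2s}$.
\item Apply the comparison principle for the nonlocal operator to $u-CW$. Here $C$ is chosen so that $u\le CW$ on $B_R$, using $u\in L^\infty$ there. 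This yields the decay bound with a constant depending on $N,s,\gamma,V_0$ and $u_s$.
\end{enumerate}
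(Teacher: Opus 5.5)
Your part (i) is the paper's argument: the strict inequality $S^V_{s,\gamma}<S^{V_\infty}_{s,\gamma}$ from the positive minimizer of [AS], the splitting of $\|u_n\|_{s,V}^2$ and of $D_\gamma$ via the nonlocal Brezis--Lieb lemma, $\int(V_\infty-V)v_n^2\to0$, and $S^V\ge S^Vt^{1/2}+S^{V_\infty}(1-t)^{1/2}$ forcing $t=1$. Your $L^\infty$/H\"older step in (ii) is also the same in substance. Step (a), however, is a detour. By Lemma~\ref{lemb2}, $|x|^{-\gamma}*u^2\in L^{2N/\gamma}$, and $2N/\gamma>N/(2s)$ is exactly the condition $s>s_0$. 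That is the integrability hypothesis the Moser iteration (Lemma~\ref{lem:A.3}) needs, and you should state it rather than leave the bootstrap unanchored.

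For the decay your route differs from the paper's. The paper uses no comparison principle. It takes $G$, the fundamental solution of $(-\Delta)^s+\frac12V_0$, and $f_s=(|x|^{-\gamma}*u_s^2-V+\frac12V_0)u_s$, which is $\le0$ for $|x|\ge R$. It then writes $u_s=G*f_s\le\int_{B_R}G(\cdot-y)f_s(y)\,dy$, so only a pointwise bound on $G$ is needed.

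Your barrier claim is wrong as stated. If $(-\Delta)^sW\sim-c|x|^{-N-2s}$ with $c>0$ (proportional to $\|W\|_{L^1}$ and independent of $V_0$), then $(-\Delta)^sW+\frac{V_0}{2}W\sim(\frac{V_0}{2}-c)|x|^{-N-2s}$. This is negative at infinity when $V_0$ is small. You can fix it in either of two ways:
\begin{itemize}
\item use $W(x/\rho)$ with $\rho$ large: the nonlocal term gains a factor $\rho^{-2s}$, and $|(-\Delta)^sW|\le CW$ on $\mathbb{R}^N$;
\item compare with $G$ itself, as in your fallback.
\end{itemize}

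Either version rests on the far-field decay $|x|^{-N-2s}$, which is what the stated exponent requires. The paper's written proof instead invokes the bound $G\le C|x|^{-(N-2s)}$, which is sharp only near the origin, so it only yields $u_s\le C|x|^{-(N-2s)}$. To reach $(1+|x|^2)^{-(N+2s)/2}$ from the representation formula, one must use $G(x)\le C|x|^{-N-2s}$ for $|x|\ge1$. Take $R\ge1$, so that $|x-y|\ge|x|/2\ge1$ whenever $|y|\le R\le|x|/2$.
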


\bigskip

If $0<s\leq s_0$, problem $(P_{s})$ is of critical or supercritical type with no attainable minimizer.
	\begin{Theorem}\label{p1}
	  Assume  $0<s\leq s_0$ and assumption $(V)$ holds. Then the infimum $S_{s,\gamma}^V$ is not attained in $H^s(\mathbb{R}^N)\setminus\{0\}$.
	\end{Theorem}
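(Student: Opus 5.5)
The plan is to compare $S_{s,\gamma}^V$ with the scale-invariant (homogeneous) quotient and to use dilations. I treat the supercritical case $0<s<s_0$ and the critical case $s=s_0$ separately. In both cases the key observation is the same: the potential term $\int V|u|^2$ can be made negligible by concentrating $u$, while the remaining quotient either degenerates to $0$ or is dilation invariant.

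\emph{Case $0<s<s_0$.} Fix $0\neq\varphi\in C_c^\infty(\mathbb{R}^N)$ and set $\varphi_\lambda(x)=\varphi(\lambda x)$. A change of variables gives
\[
\int|(-\Delta)^{\frac s2}\varphi_\lambda|^2=\lambda^{2s-N}\int|(-\Delta)^{\frac s2}\varphi|^2,\qquad \int V\varphi_\lambda^2\le V_\infty\lambda^{-N}\int\varphi^2 ,
\]
and $D_\gamma(\varphi_\lambda,\varphi_\lambda)^{1/2}=\lambda^{\frac\gamma2-N}D_\gamma(\varphi,\varphi)^{1/2}$. Hence
\[
I_{s,\gamma}^V(\varphi_\lambda)\le C\bigl(\lambda^{2s-\frac\gamma2}+\lambda^{-\frac\gamma2}\bigr).
\]
Since $2s<\gamma/2$, this tends to $0$ as $\lambda\to\infty$, so $S_{s,\gamma}^V=0$. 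On the other hand, for every $u\neq0$ we have $\|u\|_{s,V}^2\ge V_0\|u\|_2^2>0$. Hence $I_{s,\gamma}^V(u)>0$ whenever $D_\gamma(u,u)<\infty$. When $D_\gamma(u,u)=\infty$, which can happen in this range because $H^s\not\subset L^{\frac{4N}{2N-\gamma}}$, the quotient is $0$ only by a degenerate convention. I will remark that such $u$ are excluded or are not admissible minimizers, since they do not correspond to solutions. So the infimum $0$ is not attained.

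\emph{Case $s=s_0$, i.e.\ $\gamma=4s$.} Now $\frac{4N}{2N-\gamma}=2_s^*$, so HLS together with the fractional Sobolev inequality give
\[
S_*:=\inf_{0\neq u\in \dot H^s}\frac{\int|(-\Delta)^{\frac s2}u|^2}{D_\gamma(u,u)^{1/2}}>0 .
\]
This quotient is invariant under $u\mapsto\lambda^{\frac{N-2s}{2}}u(\lambda x)$, and the exponents match precisely because $\gamma=4s$. I will prove $S_{s,\gamma}^V=S_*$ in two steps.
\begin{enumerate}
\item Lower bound: $I_{s,\gamma}^V(u)\ge S_*$ is immediate, and in fact $I_{s,\gamma}^V(u)>S_*$ for every $u\neq0$, because $\int Vu^2\ge V_0\int u^2>0$.
\item Upper bound: given $\varepsilon>0$, pick $\varphi\in C_c^\infty$ with homogeneous quotient below $S_*+\varepsilon$; this is possible by density of $C_c^\infty$ in $\dot H^s$ and continuity of $D_\gamma$ on $\dot H^s$ via HLS. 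Put $\varphi_\lambda=\lambda^{\frac{N-2s}{2}}\varphi(\lambda x)$. The Gagliardo term and $D_\gamma$ are unchanged, while
\[
\int V\varphi_\lambda^2\le V_\infty\lambda^{-2s}\int\varphi^2\to0 .
\]
Hence $S_{s,\gamma}^V\le S_*+\varepsilon$.
\end{enumerate}
Combining the two steps, $S_{s,\gamma}^V=S_*$. A minimizer $u$ would then satisfy $S_*=I_{s,\gamma}^V(u)>S_*$, which is a contradiction.

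The only delicate step is the critical case. There one must check that the HLS exponent coincides with $2_s^*$ exactly when $\gamma=4s$, so that $S_*>0$ and the dilation leaves both the Dirichlet term and $D_\gamma$ invariant. One must also justify the density or continuity argument in $\dot H^s$. The decay of the $L^2$ term under concentration is elementary here, because we use compactly supported $\varphi$ rather than Sobolev bubbles. Bubbles would need $N>4s$ to lie in $L^2$, which holds anyway since $N\ge4>4s$.
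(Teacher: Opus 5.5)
Your proposal is correct and follows the paper's argument. You concentrate a test function by dilation so the potential term disappears, which gives $S_{s,\gamma}^V=0$ for $s<s_0$ and $S_{s_0,\gamma}^V\le S_{s_0}^H$ at $s=s_0$, and then you rule out a minimizer via the strict inequality $I_{s_0,\gamma}^V(u)>I_{s_0,\gamma}^0(u)\ge S_{s_0}^H$. The differences are minor: you use $C_c^\infty$ functions plus density where the paper uses the explicit extremal $\psi=(1+|x|^2)^{-\frac{N-2s_0}{2}}$, and your remark about possibly infinite $D_\gamma(u,u)$ for $s<s_0$ addresses a point the paper passes over silently.
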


\bigskip

	Fix $\gamma\in (0,4)$. 	Let $u_s$ be a nonnegative ground state  of  \eqref{eq:1.1} and $x_s$ be a maximum point of $u_s$. We study the blow-up and concentration behavior of $u_s$  as  $s\downarrow s_0$.  The proof adopts the framework of \cite{PW}, while new delicate estimates are required due to the double nonlocal structure of \eqref{eq:1.1}.
	\begin{Theorem}\label{thm2}
		There exists a constant $C(N,s_0)>0$ such that
		\begin{flushleft}
			${(i)}$ the limit $$\lim_{s\downarrow s_0}	\|u_s\|_{L^\infty(\mathbb{R}^N)}^{-1}u_s\Big(\|u_s\|_{L^\infty(\mathbb{R}^N)}^{-\frac{2}{N+2s-\gamma}}x+x_s\Big)=\Big(1+\frac{|x|^2}{C(N,s_0)}\Big)^{-\frac{N-2s_0}{2}}
	$$ holds in $\dot{H}^{s_0}(\mathbb{R}^N)\cap C(\mathbb{R}^N)$;
		\end{flushleft}
		\smallskip
		\begin{flushleft}
			${(ii)}$ $\lim_{s\downarrow s_0}V(x_s)=\inf_{x\in \mathbb{R}^N}V(x);$
		\end{flushleft}
		\smallskip
		$(iii)$ there exists  $A(n,s_0)>0$ such that
		\[
		\lim_{s\downarrow s_0}(s-s_0)\|u_s\|_\infty^{\frac{4s_0}{N-2s_0}}=A(n,s_0)\inf_{x\in \mathbb{R}^N}V(x).
		\]
	\end{Theorem}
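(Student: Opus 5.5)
Let $\mu_s=\|u_s\|_\infty$, and let $\varepsilon_s$ be chosen so that the rescaled function
\[
v_s(x)=\mu_s^{-1}u_s(\mu_s^{-\frac{2}{N+2s-\gamma}}x+x_s)
\]
solves
\[
(-\Delta)^s v_s+\mu_s^{-\frac{4s}{N+2s-\gamma}}V(\mu_s^{-\frac{2}{N+2s-\gamma}}x+x_s)v_s=(|x|^{-\gamma}*v_s^2)v_s .
\]
The exponent is forced by matching $(-\Delta)^s$ with the Choquard term: the left side scales like $\lambda^{2s}$, the right like $\mu^2\lambda^{\gamma-N}$. By construction $0\le v_s\le 1$ and $v_s(0)=1$.

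The plan follows the Pan--Wang framework referred to just above the statement.

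\emph{Step 1: energy limit.} I would show $S^V_{s,\gamma}\to S_{s_0,\gamma}$, the best HLS--Sobolev constant for the critical problem in $\dot H^{s_0}$. The upper bound comes from testing $I^V_{s,\gamma}$ on truncated, concentrated extremals $U_\varepsilon=\varepsilon^{-(N-2s_0)/2}U(x/\varepsilon)\varphi$, with $U=(1+|x|^2/C(N,s_0))^{-(N-2s_0)/2}$ the Lieb extremal. One chooses $\varepsilon\to0$ suitably as $s\downarrow s_0$ and uses continuity of the Fourier symbol $|\xi|^{2s}$ in $s$. The lower bound comes from the fractional Sobolev plus HLS inequality, whose constants are continuous in $s$, interpolated between $L^2$ and $L^{2^*_s}$.

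Next, $\|u_s\|_{s,V}^2=D_\gamma(u_s,u_s)=(S^V_{s,\gamma})^2$ stays bounded. Since Theorem~\ref{p1} forbids attainment at $s_0$, a concentration-compactness argument shows $\mu_s\to\infty$; otherwise $u_s$ would converge, after translation, to a minimizer at $s_0$. Hence $v_s$ is bounded in $\dot H^{s}$, uniformly near $s_0$, and the potential coefficient $\mu_s^{-4s/(N+2s-\gamma)}V\to0$.

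\emph{Step 2: convergence.} I would derive uniform Hölder estimates for $v_s$ from the equation, using $0\le v_s\le1$ and the boundedness of the convolution term by HLS and Hardy-type bounds. Then $v_s\to v$ locally uniformly, with $v(0)=1$, and $v$ solves $(-\Delta)^{s_0}v=(|x|^{-\gamma}*v^2)v$. Energy minimality from Step 1 makes $v$ an extremal, and the classification of extremals for the critical Choquard problem (Lieb; Du--Yang) together with the normalization $v(0)=\max v=1$ gives $v=U$.

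Upgrading to convergence in $\dot H^{s_0}$ requires a uniform decay bound
\[
v_s(x)\le C(1+|x|)^{-(N-2s_0)}.
\]
I would obtain it by a comparison or Kelvin-transform argument with a uniform barrier, using the decay structure behind Theorem~\ref{thm1}(ii). I expect this to be the main obstacle, because the constants there depend on $u_s$ and must now be made uniform in $s$ with a doubly nonlocal right-hand side. My first attempt would be a Moser iteration on the Kelvin transform, giving a uniform $L^\infty$ bound near infinity.

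\emph{Step 3: Pohozaev identity, giving (ii) and (iii).} Applying the Pohozaev identity to $(P_s)$ yields
\[
\Big(\frac{N-\gamma}{4}-\frac{N-2s}{2}\Big)D_\gamma(u_s,u_s)=\int\Big(V+\tfrac12 x\cdot\nabla V\Big)u_s^2 .
\]
Combined with the Nehari identity, the left side is a multiple of $(s-s_0)$. Rescaling and dividing by $\mu_s^{-\frac{4s_0}{N-2s_0}}\int v_s^2$ gives:
\begin{itemize}
\item The $x\cdot\nabla V$ term vanishes in the limit, since $x\cdot\nabla V$ is bounded.
\item The main term is $V(x_s)\int U^2$, finite because $N\ge4>4s_0$, i.e.\ $U\in L^2$. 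The needed tail control comes from the uniform decay of Step 2.
\end{itemize}
This yields
\[
(s-s_0)\mu_s^{\frac{4s_0}{N-2s_0}}\sim c\,V(x_s)\,\frac{\int U^2}{D_\gamma(U,U)} .
\]
For (ii), I compare $S^V_{s,\gamma}$ with the test-function expansion $S_{s_0}+c(s-s_0)\cdots+c'\varepsilon^{4s_0}\inf V$. Optimizing in $\varepsilon$ centered at a near-minimum point of $V$ forces $V(x_s)\to\inf V$, since otherwise the energy of $u_s$ would exceed the upper bound. Substituting this into the previous asymptotic relation gives (iii).
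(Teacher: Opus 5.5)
Your Steps 1 and 2 follow the paper's route: energy level $S^H_{s_0}$, blow-up by contradiction, uniform H\"older bounds and classification of the limit, and a Kelvin transform plus Moser-type iteration for uniform decay. For the Kelvin step, the paper first proves $u_s(x+x_s)\le C|x|^{-N}$ for $|x|\ge R$, which controls the transformed coefficient near the origin. One point is misattributed: strong $\dot H^{s_0}$ convergence does not need the decay bound. It follows from $\limsup\|v_s\|_{\dot H^{s_0}}\le S^H_{s_0}=\|v\|_{\dot H^{s_0}}$ via interpolation. The decay is what gives convergence in $C(\mathbb{R}^N)$ and the $L^2$ limits used later.

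\textbf{Gap in (ii).} The genuine gap is your proof of (ii). The bubble's potential term scales like $\varepsilon^{2s_0}$, not $\varepsilon^{4s_0}$. Optimizing in $\varepsilon$ then gives
\[
S^V_{s,\gamma}\le S^H_{s_0}+\frac{S^H_{s_0}}{s_0}(s-s_0)\log\frac{1}{s-s_0}+\frac{S^H_{s_0}}{s_0}(s-s_0)\log\inf V+O(s-s_0),
\]
where the final $O(s-s_0)$ does not depend on $V$. So $\inf V$ is visible only at order $s-s_0$, below the leading correction. To argue that $V(x_s)\not\to\inf V$ would push the energy of $u_s$ above this bound, you need a matching lower bound for $I^V_{s,\gamma}(u_s)$, with $V(x_s)$ in place of $\inf V$, accurate up to $o(s-s_0)$. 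That is a second-order expansion of the energy of $u_s$ around the bubble, including the scaling direction. Nothing in your plan provides it, and the interpolation lower bound used for the energy limit loses an uncontrolled $O(s-s_0)$ constant.

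The missing idea is to take the translate $u_s(\cdot+x_s-y_0)$, with $y_0$ a minimum point of $V$, as the competitor. The kinetic term and $D_\gamma$ are translation invariant, so minimality gives $\int Vu_s^2\le\int V(x)u_s(x+x_s-y_0)^2\,dx$. After rescaling this reads $\int V(\lambda_sx+x_s)v_s^2\le\int V(\lambda_sx+y_0)v_s^2$, with $\lambda_s=\|u_s\|_\infty^{-2/(N+2s-\gamma)}$. Dominated convergence (uniform decay, and $U\in L^2$ since $N\ge4>4s_0$) then yields $\lim V(x_{s_k})\le\inf V$, with no expansion at all.

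\textbf{Step 3.} The identity you want is $(s-s_0)D_\gamma(u_s,u_s)=s\int Vu_s^2+\frac12\int x\cdot\nabla V\,u_s^2$. It comes from Pohozaev combined with Nehari, or from $\frac{d}{dt}I^V_{s,\gamma}(u_s(t\cdot))|_{t=1}=0$ as in the paper. Your displayed version has left coefficient $s-s_0-\frac N4<0$, and coefficient $1$ instead of $s$ on $V$.

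More importantly, boundedness of $x\cdot\nabla V$ does not make its term negligible. After rescaling it equals $\lambda_s^{\gamma-2s}\int(\lambda_sx+x_s)\cdot\nabla V(\lambda_sx+x_s)v_s^2\,dx$, the same order as the main term $\lambda_s^{\gamma-2s}\int sV(\lambda_sx+x_s)v_s^2\,dx$. If $x_{s_k}\to x_0$, it equals $\lambda_s^{\gamma-2s}\bigl(x_0\cdot\nabla V(x_0)\int U^2+o(1)\bigr)$. If $x_s\to\infty$, nothing forces it to vanish, since $x\cdot\nabla V$ need not decay. It is $o(\lambda_s^{\gamma-2s})$ only because (ii) keeps $x_s$ bounded (as $\inf V<V_\infty$) with limit points at minima of $V$, where $\nabla V=0$.

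Hence (ii) must be proved first, independently of the Pohozaev asymptotics, which is exactly what the translation argument does. Then (iii) follows. You also need $\lambda_s^{s-s_0}\to1$, a by-product of the extremality argument in Step 2. It ensures $\lambda_s^{\gamma-2s}$ and $\|u_s\|_\infty^{-4s_0/(N-2s_0)}$ have ratio tending to $1$.
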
	

\bigskip

Finally, we study the asymptotic behavior of ground states $u_s$ as $s\uparrow 1$.

\begin{Theorem}\label{thm3}For any sequence $s_k\uparrow 1$, there exist a subsequence (still denoted by $s_k$) and a ground state solution $u$ of $(P_s)$ with $s=1$ such that the sequence $\{u_{s_k}\}$ of corresponding ground state solutions satisfies
\[
\lim_{k\to\infty} u_{s_k}=u \quad {\rm in} \quad L^{2_{s_0}^*}(\mathbb{R}^N)
\]
up to subsequence.
\end{Theorem}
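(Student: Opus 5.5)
The idea is to get uniform bounds on the ground states $u_{s_k}$, pass to a weak limit in $H^1$, and then upgrade to strong convergence by comparing minimization levels.

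\textbf{Uniform $H^1$-bound.} We normalize the ground states so that $\|u_{s}\|_{s,V}^2=D_\gamma(u_s,u_s)=(S_{s,\gamma}^V)^2$; this is the scaling implied by Lemma 2.6 of \cite{YY}, for which the equation holds with the given nonlinearity. First we show that $S_{s,\gamma}^V$ is bounded above and below uniformly for $s$ near $1$.
\begin{itemize}[leftmargin=*]
\item \emph{Upper bound.} Test $I_{s,\gamma}^V$ with a fixed $\varphi\in C_c^\infty(\mathbb{R}^N)$. Since $\int|\xi|^{2s}|\hat\varphi|^2\,d\xi\to\int|\xi|^2|\hat\varphi|^2\,d\xi$ as $s\to1$, this gives $\limsup_{s\uparrow1}S_{s,\gamma}^V\le S_{1,\gamma}^V$.
\item \emph{Lower bound.} Use Hardy–Littlewood–Sobolev, $D_\gamma(u,u)\le C\|u\|_{L^{2N/(N-\gamma/2)}}^4$, together with the fractional Sobolev/Gagliardo–Nirenberg inequality with constants uniform for $s\in[s_1,1)$, $s_1>s_0$. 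The estimate $|\xi|^{2s}\le 1+|\xi|^2$ and the Fourier characterization make these constants uniform.
\end{itemize}
Hence $\|u_{s_k}\|_{s_k,V}$ is bounded and $D_\gamma(u_{s_k},u_{s_k})$ is bounded away from zero. To obtain $H^1$ control I would use the equation: the $H^{s_k}$-norm alone does not control $\nabla u$ uniformly. The alternative route is the Bourgain–Brezis–Mironescu type compactness: boundedness of $(1-s_k)[u_k]^2_{W^{s_k,2}}$, suitably normalized, together with $L^2$ bounds gives precompactness in $L^2_{loc}$ and $H^1$ regularity of the limit. I would take this route, because $\int|\xi|^{2s_k}|\hat u_k|^2$ bounded plus $\hat u_k$ bounded in $L^2$ gives local $L^2$ compactness (Fourier tail control), and weak lower semicontinuity (Fatou in Fourier variables) gives $\int|\xi|^2|\hat u|^2\le\liminf\int|\xi|^{2s_k}|\hat u_k|^2$.

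\textbf{Ruling out vanishing.} Vanishing must be excluded using $V<V_\infty$. If $u_k\to0$ in $L^2_{loc}$ after all translations, then by Lions' lemma (uniform in $s$) $D_\gamma(u_k,u_k)\to0$, which contradicts the lower bound. So there are $y_k$ with $u_k(\cdot+y_k)\rightharpoonup u\ne0$. If $|y_k|\to\infty$, the limit is a critical point of the problem at infinity with potential $V_\infty$, and then
\[
S_{1,\gamma}^{V_\infty}\le\liminf S_{s_k,\gamma}^V\le S_{1,\gamma}^V<S_{1,\gamma}^{V_\infty}.
\]
Here the final strict inequality holds because $V<V_\infty$ and the $V_\infty$-problem is attained. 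This is a contradiction, so $y_k$ is bounded and can be absorbed into the limit.

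\textbf{Concluding strong convergence.} Apply the Brezis–Lieb splitting for both the quadratic form and $D_\gamma$, combined with the level comparison $\liminf I_{s_k,\gamma}^V(u_k)\ge I_{1,\gamma}^V(u)$, and use the strict subadditivity of $t\mapsto t^{1/2}$. This shows $u$ attains $S_{1,\gamma}^V$, that no mass is lost, and that $u$ solves $(P_1)$ by passing to the limit in the weak formulation with test functions in $C_c^\infty$. It follows that $u_k\to u$ in $L^2$, and interpolating with the uniform bound in $L^{2^*_{s_1}}$ for some $s_1$ slightly less than $1$ gives convergence in $L^{2^*_{s_0}}$, since $2<2^*_{s_0}<2^*_{s_1}$.

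\textbf{Main obstacle.} The hardest step is the uniform-in-$s$ constants: Sobolev, Lions' lemma, and local compactness must hold with constants independent of $s_k$ near $1$. I would handle all of these via the Fourier representation and the bound $|\xi|^{2s}\ge |\xi|^{2s_1}-1$.
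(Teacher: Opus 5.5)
Your proof is correct, but it handles loss of compactness at infinity differently from the paper. The paper never translates. It takes the weak limit $u$ of $u_{s_k}$ itself and performs the same Brezis--Lieb splitting of $\|u_{s_k}\|_{s_k,V}^2$ and $D_\gamma$. It excludes $0<D_\gamma(u,u)<l$ by strict concavity of $t\mapsto t^{1/2}$, as you do. The remaining case $D_\gamma(u,u)=0$ lumps vanishing and escape together; the paper excludes it by bounding the remainder below with $S^{V_\infty}_{s_k,\gamma}$ and invoking Lemma~\ref{st1}$(iii)$, $\liminf_{s\uparrow1}S^{V_\infty}_{s,\gamma}\ge S^{V_\infty}_{1,\gamma}$. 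That lemma in turn needs radial ground states $w_s$ of the autonomous fractional problem for every $s$ and a compact radial embedding. You instead dispose of vanishing with Lions' lemma in $H^{s_1}$. In the escaping case you pass to the limit in the translated equation, so the Nehari identity of the limit and Fatou give $S^{V_\infty}_{1,\gamma}\le I^{V_\infty}_{1,\gamma}(u)=D_\gamma(u,u)^{1/2}\le\liminf_k S^V_{s_k,\gamma}$. This needs nothing about the $V_\infty$-problem for $s<1$, only attainment of $S^{V_\infty}_{1,\gamma}$; the paper uses that tacitly anyway for $S^V_{1,\gamma}<S^{V_\infty}_{1,\gamma}$. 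The price is the translation argument the paper avoids. Your endgame is also more direct: equality in the splitting gives $\|u_{s_k}-u\|_{s_k,V}\to0$, hence $L^2$ and, by interpolation, $L^{2^*_{s_0}}$ convergence. The paper's Step 2 instead goes through $D_\gamma(u_{s_k}-u,u_{s_k}-u)\to0$ and a quoted bound $\|f\|_{2^*_{s_0}}\le C D_\gamma(f,f)^{1/4}$, which cannot hold in that bare form: many widely separated bumps violate it. Some minor precisions. First, $\int|\xi|^2|\hat u|^2\le\liminf_k\int|\xi|^{2s_k}|\hat u_{s_k}|^2$ is weak lower semicontinuity after identifying the weak $L^2$-limit of $|\xi|^{s_k}\hat u_{s_k}$ as $|\xi|\hat u$, not Fatou. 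Second, the BBM detour is unnecessary, since $(-\Delta)^s$ here is simply the multiplier $|\xi|^{2s}$. Third, $\liminf_k I^V_{s_k,\gamma}(u_{s_k})\ge I^V_{1,\gamma}(u)$ is an output of the splitting, not an input. The actual inputs are $\|u\|_{1,V}^2\ge S^V_{1,\gamma}D_\gamma(u,u)^{1/2}$ and $\|u_{s_k}-u\|_{s_k,V}^2\ge S^V_{s_k,\gamma}D_\gamma(u_{s_k}-u,u_{s_k}-u)^{1/2}$.
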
	
	
	The rest of the paper is organized as follows. Section 2 presents  some preliminary lemmas. Section 3 gives the proofs of Theorems \ref{thm1} and \ref{p1}.
Section 4 is devoted to the proof of Theorem \ref{thm2}.
Section 5 proves Theorem \ref{thm3}.
For simplicity, we denote $\|\cdot\|_p=\|\cdot\|_{L^p(\mathbb{R}^N)}$.

	\bigskip
	
	\bigskip

	\section{Preliminaries}

\bigskip

	 In this section, we collect some useful lemmas that will be used throughout the paper.
	\begin{Lemma}[Hardy--Littlewood--Sobolev inequality]\label{lemb} Let $t,r>1$ and $0<\mu<N$. For any  $f\in L^t(\mathbb{R}^N)$ and $h\in L^r(\mathbb{R}^N)$,  there exists a constant $C=C(t,N,\mu,r)>0$ such that
		\[
		\int_{\mathbb{R}^N}\int_{\mathbb{R}^N}\frac{f(x)h(y)}{|x-y|^\mu}\,dxdy\leq C(t,N,\mu,r)
		\|f\|_{L^t(\mathbb{R}^N)}\|h\|_{L^r(\mathbb{R}^N)},
		\]
		provided $\frac1t+\frac\mu N+\frac1r=2$.
	\end{Lemma}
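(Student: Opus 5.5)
The plan is to reduce the Hardy--Littlewood--Sobolev inequality in Lemma~\ref{lemb} to two classical facts: Young's inequality in weak-type (Lorentz) form, and H\"older's inequality. We may assume $f,h\ge 0$, since replacing them by $|f|,|h|$ only increases the left-hand side. We write the double integral as
\[
\int_{\mathbb{R}^N} f(x)\,(I_\mu * h)(x)\,dx, \qquad I_\mu(x)=|x|^{-\mu}.
\]
The kernel $I_\mu$ lies in weak $L^{N/\mu}(\mathbb{R}^N)$, because $|\{|x|^{-\mu}>\lambda\}|=\omega_N\lambda^{-N/\mu}$.

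The key step is the bound
\[
\|I_\mu*h\|_{q}\le C\|h\|_r, \qquad \frac1q=\frac1r+\frac\mu N-1 .
\]
The condition $\frac1t+\frac\mu N+\frac1r=2$ gives exactly $\frac1q=1-\frac1t$, so $q=t'$. Once this bound is available, H\"older's inequality yields
\[
\int f\,(I_\mu*h)\le \|f\|_t\,\|I_\mu*h\|_{t'}\le C\|f\|_t\|h\|_r .
\]
We also check that the exponents are admissible: $t>1$ forces $\frac1r<2-\frac\mu N-\frac1t$, and $\frac1q=1-\frac1t\in(0,1)$, so $1<r<q<\infty$ holds automatically.

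To prove the key bound we follow Hedberg's argument. Fix $x$ and a radius $R>0$, and split the kernel into its parts on $|x-y|<R$ and on $|x-y|\ge R$.
\begin{itemize}
\item For the near part we use a dyadic decomposition to get
\[
\int_{|x-y|<R}\frac{h(y)}{|x-y|^\mu}\,dy\le C R^{N-\mu}\,Mh(x),
\]
where $Mh$ is the Hardy--Littlewood maximal function. This needs $\mu<N$.
\item For the far part we use H\"older's inequality with $r'$:
\[
\int_{|x-y|\ge R}\frac{h(y)}{|x-y|^\mu}\,dy\le \|h\|_r\Big(\int_{|z|\ge R}|z|^{-\mu r'}\,dz\Big)^{1/r'} = C R^{\frac{N}{r'}-\mu}\|h\|_r .
\]
This requires $\mu r'>N$, which is equivalent to $\frac1q>0$.
\end{itemize}
We then choose $R$ so that the two terms balance, namely $R^{N/r}=\|h\|_r/Mh(x)$. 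This gives the pointwise bound
\[
(I_\mu*h)(x)\le C\,\|h\|_r^{1-r/q}\,(Mh(x))^{r/q}.
\]
Taking the $L^q$ norm and applying the maximal theorem $\|Mh\|_r\le C\|h\|_r$, valid since $r>1$, gives $\|I_\mu*h\|_q\le C\|h\|_r$.

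The main obstacle is only bookkeeping: verifying that every exponent condition holds ($r>1$ for the maximal theorem, $\mu r'>N$, and $\mu<N$). Each of these follows from the hypotheses. Alternatively, we can simply cite the classical result of Lieb and Loss, \emph{Analysis}, Theorem~4.3, since the lemma is standard.
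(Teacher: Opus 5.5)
Your proof is correct, but it takes a different route from the paper, because the paper gives no proof of Lemma~\ref{lemb}. There it is simply quoted as the classical Hardy--Littlewood--Sobolev inequality, and Lemma~\ref{lemb2} is then derived from it (implicitly by duality).

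You run the deduction the other way. Hedberg's pointwise bound $(|\cdot|^{-\mu}*h)(x)\le C\|h\|_r^{1-r/q}(Mh(x))^{r/q}$, combined with the maximal theorem, gives $\||\cdot|^{-\mu}*h\|_q\le C\|h\|_r$ with $\frac1q=\frac1r+\frac\mu N-1$. This is exactly Lemma~\ref{lemb2} with $p=r$ and $\gamma=\mu$. One application of H\"older's inequality with $q=t'$ then gives Lemma~\ref{lemb}.

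The bookkeeping checks out:
\begin{itemize}
\item $\mu<N$ makes the dyadic sum converge.
\item $\mu r'>N$ is equivalent to $\frac1q=1-\frac1t>0$.
\item $N-\mu+\frac Nq=\frac Nr$ is precisely what makes $R^{N/r}=\|h\|_r/Mh(x)$ the balancing radius. This radius is defined a.e., since $0<Mh<\infty$ a.e.\ when $h\not\equiv 0$.
\end{itemize}

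Your argument is thus self-contained modulo the maximal theorem, and it establishes both preliminary lemmas at once. What it does not deliver is the optimal constant. Lieb's rearrangement-based analysis identifies that constant in the diagonal case $t=r=\frac{2N}{2N-\mu}$, which is the case the paper uses to bound $D_\gamma(u,u)\le C\|u\|_{4N/(2N-\gamma)}^4$. Since the paper never needs that constant to be sharp, your non-sharp $C$ is enough.

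Two cosmetic points:
\begin{itemize}
\item Your opening announces weak-type Young's inequality, which you never use; the weak-$L^{N/\mu}$ remark can be dropped.
\item The phrase ``$t>1$ forces $\frac1r<2-\frac\mu N-\frac1t$'' is garbled, since that relation is an identity by hypothesis. Admissibility actually comes from $\frac1q=1-\frac1t\in(0,1)$ and $\frac1r-\frac1q=1-\frac\mu N>0$.
\end{itemize}
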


\bigskip

Lemma \ref{lemb} implies the following result.

	\begin{Lemma}\label{lemb2} Let $0<\gamma<N$ and $p>1$. For any $f\in L^p(\mathbb{R}^N)$, we have
		$|x|^{-\gamma}*f\in L^q(\mathbb{R}^N)$ and
		\[
		\Big\||\cdot|^{-\gamma}*f\Big\|_{L^q(\mathbb{R}^N)}\leq C(p,N,\gamma)\|f\|_{L^p(\mathbb{R}^N)},
		\]
		where $1+\frac1q=\frac1p+\frac\gamma N$.
	\end{Lemma}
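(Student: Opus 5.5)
The plan is to deduce Lemma \ref{lemb2} from Lemma \ref{lemb} by duality. Fix $f\in L^p(\mathbb{R}^N)$ and define $q$ by $1+\frac1q=\frac1p+\frac\gamma N$. For the statement to be meaningful we need $1<p<q<\infty$, that is, $p<\frac{N}{N-\gamma}$. This is the implicit range in which the exponent relation gives a genuine $q\in(1,\infty)$. Let $q'=\frac{q}{q-1}$ be the conjugate exponent of $q$, and take an arbitrary $h\in L^{q'}(\mathbb{R}^N)$.

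First I check the exponent condition for Lemma \ref{lemb} with $t=p$, $r=q'$ and $\mu=\gamma$:
\[
\frac1p+\frac\gamma N+\frac1{q'}=\frac1p+\frac\gamma N+1-\frac1q=2,
\]
which is exactly the defining relation for $q$. Also $t,r>1$, since $p>1$ and $q<\infty$. Applying Lemma \ref{lemb} to $|f|$ and $|h|$ then gives
\[
\Big|\int_{\mathbb{R}^N}\big(|\cdot|^{-\gamma}*f\big)(y)h(y)\,dy\Big|\le \iint_{\mathbb{R}^N\times\mathbb{R}^N}\frac{|f(x)||h(y)|}{|x-y|^\gamma}\,dxdy\le C\|f\|_{L^p}\|h\|_{L^{q'}}.
\]
Here Fubini/Tonelli is justified because the double integral of the absolute values is finite. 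The same finiteness shows that $|\cdot|^{-\gamma}*|f|$ is finite almost everywhere, so $|\cdot|^{-\gamma}*f$ is well defined.

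Second, I take the supremum over all $h$ with $\|h\|_{L^{q'}}\le 1$. By the duality $(L^{q'})^*=L^q$, valid since $1<q<\infty$, this shows that $|\cdot|^{-\gamma}*f\in L^q(\mathbb{R}^N)$ with
\[
\Big\||\cdot|^{-\gamma}*f\Big\|_{L^q}\le C(p,N,\gamma)\|f\|_{L^p}.
\]
One small point needs care here: duality bounds a measurable function's $L^q$ norm only after we know it is a.e. finite and locally integrable. Alternatively, I can apply the bound to the truncations $\min\{|\cdot|^{-\gamma}*|f|,n\}\chi_{B_n}$, which are clearly in $L^q$, and then pass to the limit $n\to\infty$ by monotone convergence.

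There is no serious obstacle in this argument. The only step requiring attention is the measure-theoretic justification of the duality step just described, which the truncation argument settles.
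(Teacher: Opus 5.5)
Your proposal is correct and follows the route the paper intends: the paper gives no proof of Lemma \ref{lemb2} beyond the remark that it follows from the Hardy--Littlewood--Sobolev inequality (Lemma \ref{lemb}), and your duality argument with $t=p$, $r=q'$, together with the truncation to justify the $L^q$ bound, is the standard way to carry out that deduction. You are also right that the statement implicitly requires $1<p<\frac{N}{N-\gamma}$, so that $q\in(p,\infty)$.
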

	
\bigskip

	The following  fractional Sobolev theorem can be found,  for example in  \cite[Theorem 1.1.8]{Am2021} and \cite[Theorem 1.1]{CT}.
	
	\begin{Lemma}[Fractional Sobolev embedding]\label{lemc} Let $N\geq 3$ and $2_s^*=\frac {2N}{N-2s}$. Then there exists a sharp constant $S_*=S(N,s)>0$ such that
		\[
		S_*\|u\|_{L^{2_s^*}(\mathbb{R}^N)}\leq \|(-\Delta)^{\frac s2}u\|_{L^2(\mathbb{R}^N)}
		\]
		for any $u\in \dot{H}^{s}(\mathbb{R}^N)$. Moreover,  $0<C_1(N)\leq S_*\leq C_2(N)$.  The space ${H}^{s}(\mathbb{R}^N)$ is continuously embedded in $L^q(\mathbb{R}^N)$ for  $q\in[2,2_s^*]$ and compactly embedded in $L_{loc}^q(\mathbb{R}^N)$ for   $q\in[2,2_s^*)$.
	\end{Lemma}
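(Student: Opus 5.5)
The statement is the fractional Sobolev embedding, Lemma \ref{lemc}. Its parts are classical, so the plan is to organize the standard arguments and note only what must be checked for uniformity in $s$.

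\textbf{Sharp inequality.} For $u\in\dot H^s(\mathbb{R}^N)$ with $N\ge 3$ and $0<s<1$ (so $2s<N$), write $u=c_{N,s}\,|x|^{-(N-2s)}*f$ with $f=(-\Delta)^{s/2}u$. This is the Riesz potential representation, checked on the Fourier side since $\mathcal F(|x|^{-(N-2s)})=c|\xi|^{-2s}$. The dual form of Lemma \ref{lemb2} then gives
\[
\|u\|_{2_s^*}\le C\|f\|_2 ,
\qquad\text{with exponents }\quad 1+\frac{1}{2_s^*}=\frac12+\frac{N-2s}{N}.
\]
Hence a finite best constant $S_*>0$ exists. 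For the sharp value I would cite the Cotsiolis--Tavoularis / Lieb computation via HLS with extremals $(1+|x|^2)^{-(N-2s)/2}$. This gives
\[
S_*^2=2^{2s}\pi^{s}\frac{\Gamma(\frac{N+2s}{2})}{\Gamma(\frac{N-2s}{2})}\Big(\frac{\Gamma(N/2)}{\Gamma(N)}\Big)^{2s/N}.
\]

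\textbf{Uniform bounds.} The two-sided bound $C_1(N)\le S_*\le C_2(N)$ follows from this formula. As $s$ ranges over $(0,1)$, the Gamma arguments $\frac{N\pm2s}{2}$ stay in the compact interval $[\frac{N-2}{2},\frac{N+2}{2}]$, which is contained in $(0,\infty)$ because $N\ge3$. Every factor is therefore continuous and positive on $[0,1]$. So $S_*$ is bounded above and below by constants depending only on $N$. This is the step that needs care, since the uniformity in $s$ is what the later blow-up analysis as $s\downarrow s_0$ relies on.

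\textbf{Continuous embedding.} For $q\in[2,2_s^*]$, interpolate between the $L^2$ and $L^{2_s^*}$ bounds:
\[
\|u\|_q\le\|u\|_2^{\theta}\|u\|_{2_s^*}^{1-\theta}\le C\|u\|_{H^s}.
\]

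\textbf{Local compactness.} The main obstacle is the compactness in $L^q_{loc}$ for $q<2_s^*$. Take a bounded sequence in $H^s$ and a ball $B_R$, and multiply by a cutoff $\varphi\in C_c^\infty$ equal to $1$ on $B_R$. The cutoff is bounded on $H^s$; this is verified via the Gagliardo seminorm, or via the commutator estimate on the Fourier side. Next, use the Fourier-side equicontinuity estimate
\[
\|\tau_h v-v\|_2^2=\int |e^{ih\cdot\xi}-1|^2|\hat v|^2\,d\xi\le C|h|^{2s}\|v\|_{H^s}^2 .
\]
Together with the compact support, this gives precompactness in $L^2$ by Riesz--Fréchet--Kolmogorov. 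Finally, interpolate between $L^2$ convergence and $L^{2_s^*}$ boundedness to obtain convergence in $L^q(B_R)$ for every $q\in[2,2_s^*)$. A diagonal argument over $R$ yields $L^q_{loc}$ convergence.
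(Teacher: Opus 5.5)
The paper gives no proof of this lemma; it only cites \cite[Theorem 1.1.8]{Am2021} and \cite[Theorem 1.1]{CT}. Your outline reconstructs the standard content of those references: the sharp constant from the Lieb/Cotsiolis--Tavoularis formula, uniform bounds read off the Gamma factors on $[\frac{N-2}{2},\frac{N+2}{2}]$, interpolation for the continuous embedding, and Riesz--Fr\'echet--Kolmogorov plus interpolation for local compactness. So the approach is essentially the paper's and is sound, apart from one slip in the first step.

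The slip is the Riesz kernel. Since $\mathcal F[(-\Delta)^{s/2}u]=|\xi|^{s}\hat u$, you get $\hat u=|\xi|^{-s}\hat f$, so the correct representation is $u=c_{N,s}\,|x|^{-(N-s)}*f$. The kernel $|x|^{-(N-2s)}$, whose transform is $c|\xi|^{-2s}$, inverts $(-\Delta)^{s}$, not $(-\Delta)^{s/2}$. Your displayed exponent relation is therefore false: its left side is $\frac32-\frac sN$ and its right side is $\frac32-\frac{2s}{N}$. With $\gamma=N-s$ and $p=2$, Lemma \ref{lemb2} applies directly, with no duality needed: $1+\frac1q=\frac12+\frac{N-s}{N}$ gives $q=2_s^*$.

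This route only yields some positive constant for each fixed $s$. Tracking uniformity through it would be awkward, because the HLS constant for $\gamma=N-s$ blows up as $s\to0$ while $c_{N,s}\to0$. Taking the uniform bound $C_1(N)\le S_*\le C_2(N)$ from the explicit formula, as you do, is the right choice.
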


\bigskip
	
	It can be founded in \cite[Page 5]{MS} the following result.
	\begin{Lemma}\label{lema} Let $s\in (0,1)$, $N>2s$, $\mu\in (0,N)$ and $2_{\mu,s}^*=\frac{2N-\mu}{N-2s}$. Then
		\[
		S_{s,\mu}^H\Big(\int_{\mathbb{R}^N}\int_{\mathbb{R}^N}
		\frac{|u(x)|^{2_{\mu,s}^*}|u(y)|^{2_{\mu,s}^*}}{|x-y|^\mu}\,\,dxdy\Big)^{\frac{1}{2_{\mu,s}^*}}
		\leq \|(-\Delta)^{\frac s2}u\|_{L^2(\mathbb{R}^N)}^2.
		\]
		The best constant $S_{s,\mu}^H$ is attained by $u$ if and only if
		\[
		u(x)=C\Big(\frac{t}{t^2+|x-x_0|^2}\Big)^{\frac{N-2s}{2}}
		\]
		for some $x_0\in \mathbb{R}^N$, $C>0$ and $t>0$. In particular, for $\mu=4s$,
		\[
		S_{s,4s}^H\Big(\int_{\mathbb{R}^N}\int_{\mathbb{R}^N}
		\frac{|u(x)|^{2}|u(y)|^{2}}{|x-y|^{4s}}\,\,dxdy\Big)^{\frac{1}{2}}
		\leq \|(-\Delta)^{\frac s2}u\|_{L^2(\mathbb{R}^N)}^2.
		\]
	\end{Lemma}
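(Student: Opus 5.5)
This lemma is the sharp fractional Hardy--Littlewood--Sobolev (Choquard) inequality, and I would deduce it from the sharp HLS inequality combined with the sharp fractional Sobolev inequality of Lemma \ref{lemc}. Set $p=2_{\mu,s}^*$ and $t=\frac{2N}{2N-\mu}$. Lemma \ref{lemb}, with $f=h=|u|^p$, gives
\[
\iint\frac{|u(x)|^p|u(y)|^p}{|x-y|^\mu}\,dxdy\le C(N,\mu)\,\||u|^p\|_{t}^2 .
\]
I use the Lieb version here, with the sharp constant $C(N,\mu)$ attained exactly by $(1+|x|^2)^{-(2N-\mu)/2}$ up to translations and dilations. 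A direct computation gives $pt=\frac{2N}{N-2s}=2_s^*$. Hence $\||u|^p\|_t^2=\|u\|_{2_s^*}^{2p}$, and therefore
\[
D:=\Big(\iint\cdots\Big)^{1/p}\le C(N,\mu)^{1/p}\|u\|_{2_s^*}^2\le C(N,\mu)^{1/p}S_*^{-2}\|(-\Delta)^{s/2}u\|_2^2 .
\]
This proves the inequality with $S_{s,\mu}^H\ge S_*^2C(N,\mu)^{-1/p}$.

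For the characterization of extremals, I would show that this chain is sharp, i.e. the constant obtained is optimal. The key observation is that the two extremal families match. Take $U(x)=\big(\frac{t}{t^2+|x-x_0|^2}\big)^{(N-2s)/2}$, which is an extremal for the sharp Sobolev inequality (Lieb, Cotsiolis--Tavoularis). Its power is $U^p=\big(\frac{t}{t^2+|x-x_0|^2}\big)^{(2N-\mu)/2}$, which is exactly the HLS extremal with exponent $t$. So both inequalities are equalities at $U$. Consequently $S_{s,\mu}^H=S_*^2C(N,\mu)^{-1/p}$, and $U$ attains it.

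Conversely, suppose $u$ attains $S_{s,\mu}^H$. Then both inequalities in the chain must be equalities. Equality in the Sobolev step forces $u$ to be a multiple of a bubble. Equality in the HLS step is consistent with this, and in fact would independently force $|u|^p$ to be an HLS extremal. This yields the stated form. One point needs care: I must also rule out sign-changing or complex phases. Equality in Sobolev already forces $u=CU$ with a constant $C$, and since $|(-\Delta)^{s/2}|u||\le\ldots$ holds in the quadratic-form sense, this is standard.

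The special case follows by putting $\mu=4s$, which requires $4s<N$. Then $p=\frac{2N-4s}{N-2s}=2$, and the statement reduces to the displayed inequality. The main obstacle is the equality characterization in the sharp HLS inequality, in the form $f=h$ with $t=r$. I would simply invoke Lieb's theorem for this rather than reprove it. Everything else is bookkeeping with exponents.
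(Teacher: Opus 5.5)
Your argument is correct and is the standard proof of this result. The paper does not prove it but quotes it from \cite{MS}, where it is obtained the same way: compose Lieb's sharp HLS inequality with the sharp fractional Sobolev inequality, note that $U^{2_{\mu,s}^*}$ is exactly the HLS extremal to get $S_{s,\mu}^H=S_*^2C(N,\mu)^{-(N-2s)/(2N-\mu)}$, and read off the equality cases from those of the Sobolev inequality. The only correction concerns your remark about ruling out phases: both sides are even in $u$, so $-U$ is also an extremal. The equality case therefore gives $u=CU$ with $C\neq 0$, which means the lemma's $C>0$ holds only up to sign (e.g.\ for $u\ge 0$). No $|u|$-argument can exclude $C<0$, and none is needed.
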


\bigskip
	
	We denote $S_s^H:=S_{s,4s}^H$ for brevity.
	
	\bigskip
	Now we recall the uniform Schauder and Harnack estimates for fractional equations in \cite{YY}.
	\begin{Lemma}\label{lemd}(Lemma 3.4 in \cite{YY}) Let $0<s_1<s<1$. If $(-\Delta)^su=f$ with $u,f\in L^\infty(\mathbb{R}^N)$, then there exists $\alpha\in (0,s_1)$ such that
		\[
		\|u\|_{C^{0,\alpha}(\mathbb{R}^N)}
		\leq C(N,\alpha,s_1)\Big(\|f\|_{L^\infty(\mathbb{R}^N)}+\|u\|_{L^\infty(\mathbb{R}^N)}\Big).
		\]
	\end{Lemma}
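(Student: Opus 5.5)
The statement is a uniform Hölder estimate: the constant depends only on $s_1$, not on $s\in(s_1,1)$. I would prove it by a cutoff splitting combined with Fourier/Bessel-potential estimates whose constants stay bounded for $s$ in a compact subrange away from $0$. Fix $\alpha\in(0,s_1)$, say $\alpha=s_1/2$. It suffices to bound the difference quotients $|u(x)-u(y)|/|x-y|^\alpha$ for $|x-y|\le1$, since for $|x-y|>1$ the quotient is at most $2\|u\|_\infty$. By translation invariance, it suffices to estimate the oscillation of $u$ on $B_1(0)$.

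The first step is to rewrite the equation with a zeroth-order term, $(-\Delta)^su+u=f+u=:g$, so that $\|g\|_\infty\le\|f\|_\infty+\|u\|_\infty$. Then $u=G_s*g$, where $G_s$ is the kernel with Fourier symbol $(1+|\xi|^{2s})^{-1}$. This representation is justified for bounded distributional solutions because the bounded solution of $(-\Delta)^s w+w=0$ is trivial (Liouville for this operator). The estimate then reduces to showing that $G_s$ is in $L^1$ with a $C^{0,\alpha}$-type modulus bound
\[
\int_{\mathbb{R}^N}|G_s(z+h)-G_s(z)|\,dz\le C(N,\alpha,s_1)|h|^\alpha,\qquad \|G_s\|_{L^1}\le C,
\]
uniformly for $s\in(s_1,1)$. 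Given this, $|u(x+h)-u(x)|\le \|g\|_\infty\int|G_s(z+h)-G_s(z)|dz$, which yields the claim.

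To bound $G_s$ uniformly, I would use the subordination formula $(1+\lambda^s)^{-1}=\int_0^\infty e^{-t}e^{-t\lambda^s}dt$. This writes $G_s=\int_0^\infty e^{-t}p^{(s)}_t\,dt$, where $p^{(s)}_t$ is the fractional heat kernel. Each $p_t^{(s)}$ is a probability density, so $\|G_s\|_{L^1}=1$. Scaling gives $p^{(s)}_t(z)=t^{-N/(2s)}p^{(s)}_1(t^{-1/(2s)}z)$, hence
\[
\|p^{(s)}_t(\cdot+h)-p^{(s)}_t\|_{L^1}\le \min\{2,\ t^{-1/(2s)}|h|\,\|\nabla p_1^{(s)}\|_{L^1}\}.
\]
Integrating against $e^{-t}\,dt$ gives a bound of order $|h|^{\alpha}$ for any $\alpha<\min(2s,1)$. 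Splitting at $t=|h|^{2s}$, the tail $t<|h|^{2s}$ contributes $\lesssim |h|^{2s}$, and the remaining part $\int_{|h|^{2s}}^1 t^{-1/(2s)}|h|\,dt$ is $\lesssim|h|^{2s}+|h|$ up to a logarithm when $2s=1$. Both terms are $\le C|h|^{\alpha}$ for $\alpha<s_1\le s$.

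The main obstacle is a uniform bound $\|\nabla p_1^{(s)}\|_{L^1}\le C(N,s_1)$ for $s\in[s_1,1]$. I would get it from the Fourier side, using the smoothness of $e^{-|\xi|^{2s}}$. The function $\xi\mapsto \xi e^{-|\xi|^{2s}}$ has derivatives up to order $N+1$ that are integrable, with bounds continuous in $s$ on $[s_1,1]$; the only singular terms at $\xi=0$ are of type $|\xi|^{2s-k}$, integrable in the weighted sense needed. This yields $|\nabla p_1^{(s)}(z)|\le C(1+|z|)^{-N-\beta}$ with $\beta>0$ and $C$ uniform in $s\in[s_1,1]$, which is integrable. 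Alternatively, the same decay follows from the known two-sided bound $p_1^{(s)}(z)\asymp (1+|z|)^{-N-2s}$, whose constants are continuous in $s$ on compact subsets of $(0,1]$.
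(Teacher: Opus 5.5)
Your proposal is correct in substance, with one computational step that needs a fix (below), and it takes a different route from the one the paper relies on. The paper gives no proof of this lemma. It quotes \cite{YY}, which for fixed $s$ is Silvestre's estimate \cite{S}: a localization argument in which one cuts off the right-hand side, takes its potential against the fundamental solution of $(-\Delta)^s$, and controls the $s$-harmonic remainder by interior estimates. \cite{YY} then supplies the uniformity in $s\in(s_1,1)$.

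You instead invert $1+(-\Delta)^s$ globally, $u=G_s*(f+u)$, and write $G_s=\int_0^\infty e^{-t}p_t^{(s)}\,dt$. All the $s$-dependence is then concentrated in the single number $\|\nabla p^{(s)}_1\|_{L^1}$. This gives a short, self-contained proof in which the uniformity is transparent, and it allows any $\alpha<\min\{2s_1,1\}$. The localized approach is more flexible (equations holding only in a domain, higher regularity), but its constants must be tracked through several kernels.

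Your representation step is fine. If $w\in L^\infty$ solves $(-\Delta)^sw+w=0$ in $\mathcal{S}'$, test with $\mathcal{F}^{-1}[\psi/(1+|\xi|^{2s})]\in\mathcal{S}$ for $\psi\in C_c^\infty(\mathbb{R}^N\setminus\{0\})$. This gives $\operatorname{supp}\hat w\subset\{0\}$, so $w$ is a bounded polynomial, hence a constant, hence $0$.

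Two steps need tightening. First, the time integral. The bounds $|h|^{2s}$ and $|h|$ you extract from $|h|\int_{|h|^{2s}}^1t^{-1/(2s)}\,dt$ carry the factor $2s/|2s-1|$. This blows up as $s\to\frac12$, which lies inside $(s_1,1)$ whenever $s_1<\frac12$, so as written the constant is not uniform. Since $\alpha<1$ and $\alpha<2s$, interpolate before integrating, using $\min\{2,a\}\le 2^{1-\alpha}a^{\alpha}$:
\[
\|G_s(\cdot+h)-G_s\|_{L^1}\le 2^{1-\alpha}\|\nabla p^{(s)}_1\|_{L^1}^{\alpha}\,\Gamma\Big(1-\frac{\alpha}{2s}\Big)\,|h|^{\alpha}.
\]
Here $\Gamma(1-\frac{\alpha}{2s})\le\Gamma(\frac12)=\sqrt{\pi}$ because $\alpha<s_1<s$.

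Second, drop the ``alternative'' via two-sided heat kernel bounds. The lower constant in $p^{(s)}_1\asymp(1+|z|)^{-N-2s}$ degenerates as $s\to1$ (the limit is Gaussian). Moreover, pointwise bounds on $p^{(s)}_1$ do not control $\nabla p^{(s)}_1$. Your Fourier argument does work: the $k$-th derivatives of $\xi e^{-|\xi|^{2s}}$ are $O(|\xi|^{2s+1-k})$ near $0$ (not $|\xi|^{2s-k}$), which is integrable for $k=N+1$. This gives $|\nabla p^{(s)}_1(z)|\le C(N,s_1)(1+|z|)^{-N-1}$.

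Shorter still is subordination to the Gauss kernel $H_u$, where $\widehat{H_u}=e^{-u|\xi|^2}$. Write $p^{(s)}_1=\int_0^\infty\eta_s(u)H_u\,du$ with $\int_0^\infty e^{-\lambda u}\eta_s(u)\,du=e^{-\lambda^s}$. Then
\[
\|\nabla p^{(s)}_1\|_{L^1}\le\|\nabla H_1\|_{L^1}\int_0^\infty u^{-1/2}\eta_s(u)\,du=\|\nabla H_1\|_{L^1}\,\frac{\Gamma(\frac{1}{2s})}{s\sqrt{\pi}},
\]
which is bounded for $s\in[s_1,1]$.
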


\bigskip

	\begin{Lemma}\label{leme2}(Lemma 3.6 in \cite{YY})
		Let $N\geq3$, $r>0$, $0<s<1$ and $y_0\in \mathbb{R}^N$. Suppose that $u\in H^s(\mathbb{R}^N)$ satisfies
		\begin{equation}\label{ulau}		
(-\Delta)^su\leq a(x)u,\quad x\in\mathbb{R}^N.
\end{equation}
		There exist $\delta=\delta(N)>0$ and $C=C(N,r)>0$ such that if
		\[\int_{B_{2r}(y_0)}|a(x)|^{\frac{N}{2s}}\,dx\leq \delta,\]
		then
		\[
		\|u\|_{L^{\frac{(2_s^*)^2}{2}}(B_r(y_0))}\leq C(N,r)\|u\|_{L^{2_s^*}(\mathbb{R}^N)}.
		\]
	\end{Lemma}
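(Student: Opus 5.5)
This is a De Giorgi--Moser type local improvement of integrability: a smallness condition on $a$ in $L^{N/(2s)}(B_{2r}(y_0))$ upgrades $u\in L^{2_s^*}$ to $L^{(2_s^*)^2/2}$ on the smaller ball. The plan is Trudinger's argument adapted to the fractional setting: test the inequality \eqref{ulau} with a truncated power of $u$ times a cutoff, and absorb the potential term using the smallness of $a$.

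We may assume $u\ge0$, or else work with $u^+$ throughout. Fix a cutoff $\eta\in C_c^\infty(B_{2r}(y_0))$ with $0\le\eta\le1$, $\eta\equiv1$ on $B_r(y_0)$ and $|\nabla\eta|\le C/r$. For $L>0$ set $u_L=\min\{u,L\}$. Take the test function
\[
\varphi=\eta^2 u\,u_L^{2\beta},\qquad \beta=\tfrac{2_s^*-2}{2},
\]
so that $u\,u_L^{\beta}$ behaves like $u^{2_s^*/2}$. This $\varphi$ lies in $H^s$ because $u_L$ is bounded. The lower bound for the left side comes from the quadratic form of $(-\Delta)^s$. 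We need a fractional chain-rule/commutator inequality of the form
\[
\langle(-\Delta)^s u,\eta^2 u u_L^{2\beta}\rangle\ \ge\ c_\beta\|(-\Delta)^{s/2}(\eta w)\|_2^2-C\iint\frac{|\eta(x)-\eta(y)|^2}{|x-y|^{N+2s}}\big(w(x)^2+w(y)^2\big)\,dx\,dy,
\qquad w=u\,u_L^\beta .
\]
To prove it, write the form as the double integral with kernel $C_{N,s}|x-y|^{-N-2s}$. Use the elementary inequality
\[
(a-b)\big(a\,a_L^{2\beta}-b\,b_L^{2\beta}\big)\ge c_\beta\big(a\,a_L^\beta-b\,b_L^\beta\big)^2 ,
\]
valid for monotone truncated powers, and handle $\eta$ with the standard identity $\eta(x)^2A-\eta(y)^2B$. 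The error term is bounded by $C(N,r)\|w\|_{L^2(B_{4r})}^2$ plus a tail term. Both are controlled, since $w\le u^{2_s^*/2}$, by $\|u\|_{2_s^*}^{2_s^*}$ up to constants depending on $r$. (Here $\int w^2\le \int u^{2_s^*}$ globally, which suffices.)

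For the right side, use Hölder and Sobolev (Lemma \ref{lemc}):
\[
\int a\,\eta^2 w^2\le \Big(\int_{B_{2r}}|a|^{N/(2s)}\Big)^{2s/N}\|\eta w\|_{2_s^*}^2\le \delta^{2s/N}S_*^{-2}\|(-\Delta)^{s/2}(\eta w)\|_2^2 .
\]
Lemma \ref{lemc} gives $S_*\ge C_1(N)$ uniformly in $s$, and $c_\beta$ is bounded below since $2_s^*\le 2N/(N-2)$. Hence choosing $\delta=\delta(N)$ small lets us absorb this term into the left side. This yields
\[
\|\eta w\|_{2_s^*}^2\le C(N,r)\|u\|_{2_s^*}^{2_s^*}.
\]
Letting $L\to\infty$ by monotone convergence gives
\[
\|u\|_{L^{(2_s^*)^2/2}(B_r)}^{2_s^*}\le C\|u\|_{2_s^*}^{2_s^*},
\]
which is the claim after taking roots.

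The main obstacle is the nonlocal chain-rule step: obtaining the coercive lower bound with constants independent of $s$, especially controlling the cutoff commutator near $s\to1$, where the kernel constant $C_{N,s}$ degenerates like $(1-s)$ against $\int|x-y|^{2-N-2s}$. I would handle this by bounding
\[
\sup_x\int\frac{|\eta(x)-\eta(y)|^2}{|x-y|^{N+2s}}\,dy\le C\Big(\frac{r^{-2}}{1-s}+\frac{r^{-2s}}{s}\Big),
\]
and noting that multiplication by $C_{N,s}\sim s(1-s)$ cancels both singular factors.

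As a side remark, the stated homogeneity of the conclusion appears inconsistent. The argument naturally produces $\|u\|^{2_s^*/2}$ on the right, and the inequality as displayed is only literally correct up to this power, or under a normalization of $\|u\|_{2_s^*}$. I would prove the version with the power and note that the linear form follows when $\|u\|_{2_s^*}$ is bounded.
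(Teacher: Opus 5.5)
The paper gives no proof of this lemma; it is quoted from Lemma 3.6 of \cite{YY}, so your argument has to stand on its own. Its architecture is the standard Trudinger/Moser route: test with $\eta^2u\,u_L^{2\beta}$, prove a nonlocal Caccioppoli estimate with a cutoff error, absorb the potential by H\"older--Sobolev, then let $L\to\infty$. The steps you only sketch do hold with $N$-dependent constants:
\begin{itemize}
\item For $G(t)=t\,t_L^{2\beta}$ and $H(t)=t\,t_L^{\beta}$ one has $(H')^2\le\frac{(1+\beta)^2}{1+2\beta}G'$. This gives your elementary inequality with $c_\beta=\frac{1+2\beta}{(1+\beta)^2}$.
\item Since $H(t)^2=tG(t)$ with $G$ nondecreasing, $|a-b|\,(G(a)+G(b))\le 2\,|H(a)^2-H(b)^2|$. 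This is exactly what lets Young's inequality split off the cutoff term, at the cost of a fixed fraction of $c_\beta$.
\item $C_{N,s}\sup_x\int|\eta(x)-\eta(y)|^2|x-y|^{-N-2s}\,dy\le C(N)(1+r^{-2})$, uniformly in $s$.
\end{itemize}

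The step that does not go through as written is the choice of $\delta$. The absorbed term carries the factor $\|a\|_{L^{N/(2s)}(B_{2r}(y_0))}\le\delta^{2s/N}$, not $\delta$. You therefore need $\delta^{2s/N}\le\kappa\,c_\beta S_*^2$, with $\kappa<1$ fixed by the Young step. As $s\downarrow0$, the left side tends to $1$ for every fixed $\delta\in(0,1)$. The right side tends to $\kappa$, because $c_\beta\to1$ and $S_*\to1$. So this argument yields no $s$-independent $\delta$ on all of $(0,1)$.

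What you do prove is the lemma with $\delta=\delta(N,s_1)$ for $s\in[s_1,1)$, since $\delta^{2s/N}\le\delta^{2s_1/N}$ there, and with $C=C(N,r)$ still uniform. That is all the paper uses: Lemma \ref{leme2} is invoked only for $s$ close to $s_0=\gamma/4>0$, in Lemmas \ref{lem7} and \ref{lem9}. Still, you should replace the claim $\delta=\delta(N)$ by this dependence.

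Two smaller points.
\begin{itemize}
\item Your closing remark on homogeneity is mistaken. The hypothesis is invariant under $u\mapsto\lambda u$ with $\lambda>0$, and the conclusion is $1$-homogeneous. Your own final inequality $\|u\|_{L^{(2_s^*)^2/2}(B_r)}^{2_s^*}\le C\|u\|_{2_s^*}^{2_s^*}$ is exactly the stated bound, with constant $C^{1/2_s^*}$. The exponent $2_s^*/2$ only relates $\|\eta w\|_{2_s^*}$ to $\|u\|_{L^{(2_s^*)^2/2}(B_r)}$.
\item Replacing $u$ by $u^+$ is not a reduction, since it only controls $u^+$. The lemma genuinely requires $u\ge0$. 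For $a\equiv0$ and $N>4s$, take $u=-|x|^{2s-N}*f$ with $0\le f\in L^{2N/(N+2s)}$ compactly supported and singular enough at $y_0$. Then $u$ satisfies the hypothesis and lies in $H^s$, but not in $L^{(2_s^*)^2/2}(B_r(y_0))$. Nonnegativity holds in every application in the paper, and it is also what makes your test function admissible in the one-sided inequality.
\end{itemize}
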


\bigskip

	\begin{Lemma}\label{leme}(Lemma 3.1 in \cite{YY}) Let $0<r<1$, $y_0\in \mathbb{R}^N$ and $u\in H^s(\mathbb{R}^N)$ be a nonnegative function
		satisfying
		\[
		(-\Delta)^su=a(x)u+b(x),\quad x\in B_r(y_0).
		\]
		Assume that $0<s_1\leq s\leq s_2<1$, $a,b\in L^q(B_r(y_0))$ with $q>\frac{N}{2s_1}$. Let $C_a$
		be an upper bound of $\|a\|_{L^q(B_r(y_0))}$. Then
		\[
		\sup_{B_{r/2}(y_0)}u\leq C_2(N,s_1,s_2,C_a)\Big(\inf_{B_{r/2}(y_0)}u+\|b\|_{L^q(B_r(y_0))}\Big).
		\]
	\end{Lemma}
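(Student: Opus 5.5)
The plan is to run the De Giorgi--Nash--Moser scheme for the nonlocal operator $(-\Delta)^s$, following Di Castro--Kuusi--Palatucci, and to check that every constant depends on $s$ only through the bounds $s_1\le s\le s_2$. First I would absorb the inhomogeneity. Set $k=\|b\|_{L^q(B_r(y_0))}$; if $k=0$ there is nothing extra to do, and otherwise replace $u$ by $\bar u=u+k\ge k>0$. Then
\[
(-\Delta)^s\bar u=(-\Delta)^s u=\Big(a+\frac{b}{\bar u}\Big)\bar u=:\bar a\,\bar u \quad\text{in } B_r(y_0),
\]
with $|\bar a|\le |a|+|b|/k$, so $\|\bar a\|_{L^q(B_r(y_0))}\le C_a+1$. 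It therefore suffices to prove
\[
\sup_{B_{r/2}(y_0)}\bar u\le C\inf_{B_{r/2}(y_0)}\bar u
\]
for nonnegative solutions of $(-\Delta)^s\bar u=\bar a\bar u$. This gives the claim, because $\sup u\le \sup\bar u$ and $\inf\bar u=\inf u+k$.

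Second, I would write the equation through the Gagliardo form with kernel $c_{N,s}|x-y|^{-N-2s}$. The constant $c_{N,s}$ is bounded above and below for $s\in[s_1,s_2]$, and Lemma~\ref{lemc} gives a Sobolev constant that is also bounded above and below. Testing with $\eta^2\bar u^{\beta}$, where $\eta$ is a cutoff and $\beta\in\mathbb{R}\setminus\{-1\}$, gives Caccioppoli-type inequalities. The potential term is
\[
\int \bar a\,\eta^2\bar u^{\beta+1}\le \|\bar a\|_{L^q}\,\|\eta^2\bar u^{\beta+1}\|_{L^{q'}}.
\]
Since $q>\frac{N}{2s_1}\ge\frac N{2s}$, we have $q'<\frac{2^*_s}{2}$. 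Interpolating between $L^1$ and $L^{2^*_s/2}$ and then applying Young's inequality lets this term be absorbed into the Gagliardo seminorm of $\eta\bar u^{(\beta+1)/2}$. The exponent of interpolation stays uniformly away from the endpoint because $q-\frac N{2s}\ge q-\frac N{2s_1}>0$, and the absorption constant carries a power of $r$, which is harmless since $0<r<1$. For $\beta>0$, iterating over shrinking balls (Moser) gives $\sup_{B_{r/2}}\bar u\le C(\text{avg}_{B_{r}}\bar u^{p_0})^{1/p_0}+C\,\mathrm{Tail}$. For $\beta<0$ the same iteration gives $\inf_{B_{r/2}}\bar u\ge c(\text{avg}\,\bar u^{-p_0})^{-1/p_0}$. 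Because $\bar u\ge0$ on all of $\mathbb{R}^N$, the nonlocal contributions in the supersolution estimates have a favorable sign and can be dropped.

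Third, I would bridge the two estimates with the logarithmic estimate: testing with $\eta^2\bar u^{-1}$ shows that $\log\bar u$ has BMO norm on $B_{3r/4}$ bounded uniformly in $s$. John--Nirenberg, or the Bombieri--Giusti lemma, then yields $\text{avg}\,\bar u^{p_0}\cdot\text{avg}\,\bar u^{-p_0}\le C$. I expect the main obstacle to be the nonlocal tail
\[
r^{2s}\int_{\mathbb{R}^N\setminus B_r}\bar u(y)|y-y_0|^{-N-2s}\,dy
\]
that appears in the sub-solution (sup) estimate, since it must be controlled by $\inf_{B_{r/2}}\bar u$ to obtain a full Harnack inequality rather than one with a tail. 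I would handle it as in Kassmann's argument. Because $\bar u\ge0$ globally, testing the equation with a bump $\eta$ supported in $B_{r/2}$ gives $\int_{\mathbb{R}^N\setminus B_r}\bar u(y)|y-y_0|^{-N-2s}dy\le C r^{-2s}\inf_{B_{r/2}}\bar u$. Throughout I would verify that each constant, including $c_{N,s}$ and the factors $(1-s)^{-1}$ and $s^{-1}$ coming from the kernel integrals, stays bounded for $s\in[s_1,s_2]$.
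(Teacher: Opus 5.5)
The paper gives no proof of this lemma. It imports it from \cite{YY}, where the fixed-$s$ Harnack inequality of \cite{JL} is made uniform for $s\in[s_1,s_2]$. To my knowledge, the argument of \cite{JL} works on the Caffarelli--Silvestre extension. The extension $U\ge0$ of $u$ solves the local degenerate equation $\mathrm{div}(t^{1-2s}\nabla U)=0$ in $B_r(y_0)\times(0,r)$ with Neumann datum $-\lim_{t\to0^+}t^{1-2s}\partial_tU=\kappa_s(au+b)$. Moser iteration with weighted Sobolev and trace inequalities then gives Harnack for $U$, and hence for its trace $u$. In that route, global nonnegativity of $u$ is used only to get $U\ge0$, and no tail term appears because the extended problem is local. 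The price is checking that $\kappa_s$ and the weighted Sobolev/trace constants stay bounded for $s\in[s_1,s_2]$.

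You instead run Moser directly on the Gagliardo form, in the spirit of Kassmann and Di Castro--Kuusi--Palatucci. This avoids the extension and adapts to kernels comparable to $|x-y|^{-N-2s}$, where no extension is available. It also makes the $s$-bookkeeping transparent: $c_{N,s}$, $s^{-1}$, $(1-s)^{-1}$, and the Sobolev constant of Lemma \ref{lemc}. What you pay is the nonlocal tail, which you correctly single out as the crux.

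Two points need repair.

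First, the reduction step: $(-\Delta)^s\bar u=au+b$, not $a\bar u+b$. Define $\bar a=(au+b)/\bar u$ instead; it still satisfies $|\bar a|\le|a|+|b|/k$. If $b\equiv0$, use $u+\varepsilon$ with $\varepsilon\downarrow0$, so that negative powers make sense.

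Second, testing with a bump $\eta\in C_c^\infty(B_{r/4}(y_0))$ does not bound the tail by $\inf\bar u$ directly. What it gives is
\[
\int_{\mathbb{R}^N\setminus B_{r/2}(y_0)}\frac{\bar u(y)}{|y-y_0|^{N+2s}}\,dy\le Cr^{-N-2s}\int_{B_{r/2}(y_0)}\bar u\,dx+Cr^{-N}\|\bar a\|_{L^q(B_r(y_0))}\|\bar u\|_{L^{q'}(B_{r/2}(y_0))}.
\]
So you need the weak Harnack inequality $\big(|B|^{-1}\int_B\bar u^t\big)^{1/t}\le C\inf\bar u$ at $t=1$ and $t=q'$, after the usual adjustment of radii. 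Your outline only reaches the crossover at the small John--Nirenberg exponent $p_0$. Meanwhile, the $\beta>0$ iteration bounds $\sup\bar u$ by an $L^p$ average with $p>1$, not by the $L^{p_0}$ average.

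The missing step is the supersolution iteration with $\beta\in(-1,0)$. It raises $p_0$ to any $t<\frac{N}{N-2s_1}\le\frac{N}{N-2s}$. Since $q>\frac{N}{2s_1}$ gives $q'<\frac{N}{N-2s_1}$, this single step closes both the tail estimate and the link between the sup bound and the crossover, with constants uniform in $s$.
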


\bigskip

\begin{Lemma}\label{lem:A.3}(Lemma 3.7 in \cite{YY})
 Let $0<s<1$ and $u\in H^s(\mathbb{R}^n)$  be a nonnegative solution of \eqref{ulau}.
If $a\in L^t(\mathbb{R}^n)$ with $t>\frac{n}{2s}$ and
\[
\|a\|_{L^t(\mathbb{R}^n)}\leq C_a,
\]
then
\[
\|u\|_\infty\leq C(n,t,s,C_a)\|u\|_{2_s^*}.
\]
Moreover, for fixed $s_0\in (0,1)$, if $t>\frac{n}{2s_0}$ and $C_a$ is independent of $s$, $C(n,t,s, C_a)$ is uniformly bounded for $s\in(s_0,1)$.
\end{Lemma}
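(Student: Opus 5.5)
The plan is a Moser iteration that starts from the $L^{2_s^*}$ norm and uses $a\in L^t$ only through Hölder's inequality. The gain in integrability at each step comes from the gap between $2t'$ and $2_s^*$, where $t'=\frac{t}{t-1}$. Since $t>\frac n{2s}$, we have $2t'<\frac{2n}{n-2s}=2_s^*$. Set
\[
\chi:=\frac{2_s^*}{2t'}>1 .
\]

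\textbf{Step 1: energy inequality for powers.} Fix $\beta\ge1$ and $L>0$, and set $u_L=\min\{u,L\}$. As test function in \eqref{ulau} I take $\varphi=u\,u_L^{2(\beta-1)}$. It is nonnegative and belongs to $H^s$, because $u_L$ is bounded and Lipschitz in $u$. I use the fractional Stroock--Varopoulos type inequality: for $w_L=u\,u_L^{\beta-1}$ there is an absolute $c>0$ with
\[
\int (-\Delta)^{\frac s2}u\,(-\Delta)^{\frac s2}\varphi \;\ge\; \frac{c}{\beta}\,\|(-\Delta)^{\frac s2}w_L\|_2^2 .
\]
This follows from the pointwise inequality $(A-B)\bigl(f(A)-f(B)\bigr)\ge \frac c\beta\,(g(A)-g(B))^2$ with $f(\tau)=\tau\min\{\tau,L\}^{2(\beta-1)}$ and $g(\tau)=\tau\min\{\tau,L\}^{\beta-1}$, applied inside the Gagliardo double integral. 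The normalising constant $C(n,s)$ of that integral appears on both sides and cancels.

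Combining this with Lemma \ref{lemc} and Hölder's inequality gives
\[
S_*^2\|w_L\|_{2_s^*}^2\le \frac{\beta}{c}\int a\,w_L^2\le \frac{\beta}{c}\,C_a\,\|w_L\|_{2t'}^2 .
\]
Here I used $u\varphi=w_L^2$. Note $w_L\le u^\beta$, and $w_L$ increases to $u^\beta$ as $L\to\infty$. The right-hand side is finite whenever $u\in L^{2\beta t'}$. Letting $L\to\infty$ by monotone convergence yields
\[
\|u\|_{\beta 2_s^*}\le \Bigl(\tfrac{C_a\beta}{cS_*^2}\Bigr)^{\frac1{2\beta}}\|u\|_{2\beta t'} .
\]

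\textbf{Step 2: iteration.} I start from $\beta_0=1$. Then $2\beta_0t'=2t'\in(2,2_s^*)$, so $\|u\|_{2t'}\le\|u\|_2^{1-\lambda}\|u\|_{2_s^*}^\lambda$. To avoid introducing $\|u\|_2$, it is cleaner to start at $\beta_0=\chi$, since then $2\beta_0t'=2_s^*$. Define $\beta_k=\chi^{k+1}$; then $2\beta_{k+1}t'=\beta_k2_s^*$, so consecutive steps chain. Iterating the estimate from Step 1 gives
\[
\|u\|_{\beta_k2_s^*}\le \prod_{j=0}^{k}\Bigl(\tfrac{C_a\chi^{j+1}}{cS_*^2}\Bigr)^{\frac1{2\chi^{j+1}}}\|u\|_{2_s^*}.
\]
The product converges because $\sum_j \chi^{-j}$ and $\sum_j j\chi^{-j}$ are finite. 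Letting $k\to\infty$ gives $\|u\|_\infty\le C\|u\|_{2_s^*}$.

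\textbf{Step 3: uniformity in $s\in(s_0,1)$.} The constant obtained is an explicit function of $C_a$, $S_*$, $c$ and $\chi$, and each of these is controlled uniformly in $s$.
\begin{itemize}
\item $S_*\ge C_1(n)$ by Lemma \ref{lemc}.
\item $c$ is absolute.
\item $\chi=\frac{n(t-1)}{t(n-2s)}\ge \frac{n(t-1)}{t(n-2s_0)}=:\chi_0>1$, because $t>\frac n{2s_0}$.
\end{itemize}
Bounding every factor using $\chi\ge\chi_0$ makes the series above, and hence $C$, uniformly bounded.

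The main obstacle is Step 1. One must justify the test function and prove the pointwise convexity inequality with a constant that does not degenerate in $s$ or $L$. The iteration itself is routine once this inequality is in place.
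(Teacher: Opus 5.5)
The paper does not prove this lemma. It is quoted from Lemma 3.7 of \cite{YY} and used as a black box, so your argument has to stand on its own, and it does: your global Moser iteration is correct.

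The step you flagged as the main obstacle goes through with $c=1$. For your $f,g$ one has $f'=\frac{2\beta-1}{\beta^2}(g')^2$ on $(0,L)$ and $f'=(g')^2$ on $(L,\infty)$. Hence $f'\ge\frac1\beta(g')^2$ whenever $\beta\ge1$. Applying Cauchy--Schwarz to $g(A)-g(B)=\int_B^A g'$ then gives $(A-B)(f(A)-f(B))\ge\frac1\beta(g(A)-g(B))^2$.

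The test function is admissible. Since $f$ is Lipschitz with $f(0)=0$, we have $\varphi\in H^s$. Moreover $au\varphi=a\,w_L^2\in L^1$, because $w_L\le L^{\beta-1}u$ and $2\le 2t'<2_s^*$. If the weak form of \eqref{ulau} is only posed for $C_c^\infty$ test functions, a density argument suffices: $t>\frac n{2s}$ guarantees some $q\in[2,2_s^*]$ with $au\in L^{q'}$.

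Summing the logarithms, your constant is $\bigl(C_a/S_*^2\bigr)^{\frac{1}{2(\chi-1)}}\chi^{\frac{\chi}{2(\chi-1)^2}}$. This stays bounded for all $\chi\ge\chi_0$ once $S_*\ge C_1(n)$ from Lemma \ref{lemc}, which needs $n\ge3$. So the uniformity for $s\in(s_0,1)$ is transparent.

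Two features of your route are worth noting.
\begin{enumerate}
\item Because \eqref{ulau} holds on all of $\mathbb{R}^n$, you need no cutoff functions. A local iteration, as in Lemma \ref{leme2}, would produce nonlocal tail and commutator terms whose dependence on $s$ would have to be tracked.
\item You start the chain at $\beta_0=\chi$, so that $2\beta_0t'=2_s^*$, rather than interpolating $\|u\|_{2t'}$ between $\|u\|_2$ and $\|u\|_{2_s^*}$. This is what makes the final bound depend on $\|u\|_{2_s^*}$ alone, as the statement requires.
\end{enumerate}
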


\bigskip

We remark that for a fixed $s$, Lemmas \ref{lemd} and \ref{leme} were obtained in \cite{S} and \cite{JL} respectively.
	
Finally, in order to study the limiting behavior of the ground state solution of \eqref{eq:1.1} as $s\downarrow s_0$, we have the following results.
\bigskip

	\begin{Lemma}\label{lemf}
		Let $0<s_1\leq s_2\leq 1$ and $u\in H^{s_2}(\mathbb{R}^N)$. Then
		\begin{flushleft}
			${(i)}\quad \|u\|_{H^{s_1}(\mathbb{R}^N)}^2\leq 2\|u\|_{H^{s_2}(\mathbb{R}^N)}^2;$
		\end{flushleft}	
		\begin{flushleft}
			${(ii)}\quad  \|u\|_{\dot{H}^{s_1}(\mathbb{R}^N)}^2\leq \|u\|_{\dot{H}^{s_2}(\mathbb{R}^N)}^{\frac{2s_1}{s_2}}
			\|u\|_{L^{2}(\mathbb{R}^N)}^{2(1-\frac{s_1}{s_2})}$.
		\end{flushleft}	
	\end{Lemma}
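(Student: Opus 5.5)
Both inequalities follow from Plancherel's identity. We write
\[
\|u\|_{\dot H^{s}}^2=(2\pi)^{-N}\int_{\mathbb{R}^N}|\xi|^{2s}|\hat u(\xi)|^2\,d\xi,\qquad
\|u\|_{2}^2=(2\pi)^{-N}\int_{\mathbb{R}^N}|\hat u(\xi)|^2\,d\xi ,
\]
where $\hat u=\mathcal F(u)$ is the Fourier transform normalized as in the introduction. Then $\|(-\Delta)^{\frac s2}u\|_2^2=\|u\|_{\dot H^s}^2$. We reduce each claim to a pointwise inequality for the weight, or to a Hölder inequality in the $\xi$ variable. For $s_2=1$ we read $\|u\|_{\dot H^1}^2=\|\nabla u\|_2^2$, which has the same Fourier representation.

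For (i), it suffices to show, for every $\xi$,
\[
|\xi|^{2s_1}+1\le 2\big(|\xi|^{2s_2}+1\big).
\]
We split into two cases. If $|\xi|\le1$, then $|\xi|^{2s_1}\le 1$, so the left side is at most $2$, which is at most the right side. If $|\xi|>1$, then $s_1\le s_2$ gives $|\xi|^{2s_1}\le|\xi|^{2s_2}$. Multiplying the pointwise inequality by $|\hat u|^2$ and integrating gives (i); in fact the second case even yields the constant $1$.

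For (ii), we write the integrand as a product:
\[
|\xi|^{2s_1}|\hat u|^2=\big(|\xi|^{2s_2}|\hat u|^2\big)^{\theta}\big(|\hat u|^2\big)^{1-\theta},\qquad \theta=\frac{s_1}{s_2}\in(0,1].
\]
We then apply Hölder's inequality with exponents $1/\theta$ and $1/(1-\theta)$. This gives
\[
\int|\xi|^{2s_1}|\hat u|^2\,d\xi\le\Big(\int|\xi|^{2s_2}|\hat u|^2\,d\xi\Big)^{\theta}\Big(\int|\hat u|^2\,d\xi\Big)^{1-\theta}.
\]
The normalizing factors $(2\pi)^{-N}$ match on both sides, since $\theta+(1-\theta)=1$. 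This is exactly (ii). The endpoint $\theta=1$, i.e. $s_1=s_2$, is trivial.

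There is no real obstacle here. The only care needed is the normalization of the norms and the degenerate cases $s_1=s_2$ and $s_2=1$.
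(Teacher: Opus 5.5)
Your proposal is correct and follows the paper's own proof: (i) is the same pointwise weight comparison in Fourier variables, split at $|\xi|=1$, and (ii) is the same Hölder inequality with exponents $s_2/s_1$ and its conjugate. Your attention to the $(2\pi)^{-N}$ normalization, which the paper suppresses, and to the endpoint $s_1=s_2$ is harmless extra care, since both inequalities are homogeneous in that factor.
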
	
	\begin{proof}
		For $u\in H^{s_2}(\mathbb{R}^N)$, the conclusion $(i)$ follows from the inequality
		\begin{equation*}
			\begin{split}
				\|u\|_{H^{s_1}(\mathbb{R}^N)}^2&=\int_{\mathbb{R}^N}(1+|\xi|^{2s_1})|\hat{u}(\xi)|^2\,d\xi
				\leq\int_{\mathbb{R}^N}(2+|\xi|^{2s_2})|\hat{u}(\xi)|^2\,d\xi \leq2\|u\|_{H^{s_2}(\mathbb{R}^N)}^2;
			\end{split}
		\end{equation*}
		whereas for $(ii)$, we deduce it  from the inequality
		\begin{equation*}
			\begin{split}
				\|u\|_{\dot{H}^{s_1}(\mathbb{R}^N)}^2&=\int_{\mathbb{R}^N}|\xi|^{2s_1}|\hat{u}(\xi)|^2\,d\xi	\\	
				&\leq \Big(\int_{\mathbb{R}^N}|\xi|^{2s_2}|\hat{u}(\xi)|^2\,d\xi\Big)^{\frac{s_1}{s_2}}
				\Big(\int_{\mathbb{R}^N}|\hat{u}(\xi)|^2\,d\xi\Big)^{1-\frac{s_1}{s_2}}\\
				&=\|u\|_{\dot{H}^{s_2}(\mathbb{R}^N)}^{\frac{2s_1}{s_2}}
				\|u\|_{L^2(\mathbb{R}^N)}^{2(1-\frac{s_1}{s_2})}.
			\end{split}
		\end{equation*}	
	\end{proof}

\bigskip
	
	\section{Existence and nonexistence}

\bigskip

	In this section, we will show the existence and nonexistence of ground state solutions of problem \eqref{eq:1.1}. That is, we will prove  Theorem \ref{thm1} and
Theorem \ref{p1}.

\bigskip

	{\bf Proof of Theorem \ref{thm1}} $(i)$ First we show that $S_{s,\gamma}^V$ given in \eqref{ssgv} is positive. Indeed,  since $0<\gamma<4s$, $\frac{4N}{2N-\gamma}\leq 2_s^*$.
	By \eqref{svh1}, Lemmas \ref{lemb} and \ref{lemc}, we derive
	\begin{equation}\label{sgvd0}
		\begin{split}
			D_\gamma(u,u)\leq C(N,\gamma)\|u\|_{\frac{4N}{2N-\gamma}}^4\leq C(N,\gamma,s,V)\|u\|_{s,V}^4.
		\end{split}
	\end{equation}
This  implies that $S_{s,\gamma}^V>0$.
\bigskip
	
	Next, we claim that $S_{s,\gamma}^{V_\infty}>S_{s,\gamma}^{V}$.

We know from \cite{AS} that there is a positive minimizer  $w_s$ of $S_{s,\gamma}^{V_\infty}$.
	By the assumption $(V)$, we have
	\begin{equation*}
		\begin{split}
			S_{s,\gamma}^{V_\infty}&=\frac{\|w_s\|_{s,V_\infty}^2}{D_\gamma(w_s,w_s)^{\frac12}}>\frac{\|w_s\|_{s,V}^2}{D_\gamma(w_s,w_s)^{\frac12}}\geq S_{s,\gamma}^{V},
		\end{split}
	\end{equation*}
	that is $S_{s,\gamma}^{V_\infty}>S_{s,\gamma}^{V}$.
	
	\bigskip
Now we show that $S_{s,\gamma}^V$ is achieved.

	Let $\{w_n\}\subset H^s(\mathbb{R}^N)$ be a minimizing sequence of $S_{s,\gamma}^V$. Hence,
	$$I_{s,\gamma}^V(w_n)\to S_{s,\gamma}^V, \quad{\rm as}\quad n\to\infty.$$
 Choose
	\[
	u_n=\Big(\frac{(S_{s,\gamma}^V)^2}{D_\gamma(w_n,w_n)}\Big)^{\frac14}w_n,
	\]
	we may verify that
	\begin{equation}\label{unfc}
		D_\gamma(u_n,u_n)=\Big(S_{s,\gamma}^V\Big)^2\quad \text{and}\quad \lim_{n\to\infty}\|u_n\|_{s,V}^2=\Big(S_{s,\gamma}^V\Big)^2.
	\end{equation}	
	Thus, $\{u_n\}\subset H^s(\mathbb{R}^N)$ is  a bounded minimizing sequence of $S_{s,\gamma}^V$.
	By Lemma \ref{lemc}, we may assume that
	\begin{equation}\label{unwc}
		u_n\rightharpoonup u\quad \text{weakly in } \quad H^s(\mathbb{R}^N) \quad \text{as } \quad n\to\infty;
	\end{equation}	
	\begin{equation}\label{unlc}
		u_n\to u\quad \text{strongly in } \quad L_{loc}^2(\mathbb{R}^N) \quad \text{as } \quad n\to\infty.
	\end{equation}	
	Let $l=D_\gamma(u,u)$. The Fatou Lemma yields that $$0\leq l\leq \Big(S_{s,\gamma}^V\Big)^2.$$

Set
	$v_n=u_n-u$. Then,
\begin{equation}\label{unwca}
		v_n\rightharpoonup 0\quad \text{weakly in } \quad H^s(\mathbb{R}^N) \quad \text{as } \quad n\to\infty;
	\end{equation}	
	\begin{equation}\label{unlcb}
		v_n\to 0\quad \text{strongly in } \quad L_{loc}^2(\mathbb{R}^N) \quad \text{as } \quad n\to\infty.
	\end{equation}

  By $(V)$,
  \[
  \lim_{n\to\infty}\int_{\mathbb{R}^N}(V(x)-V_\infty)v_n\,dx =0.
  \]
Therefore,
	\begin{equation*}
		\begin{split}
			\Big(S_{s,\gamma}^V\Big)^2	=\lim_{n\to\infty}\|u_n\|_{s,V}^2=\lim_{n\to\infty}\|v_n\|_{s,V}^2+\lim_{n\to\infty}\|u\|_{s,V}^2=\lim_{n\to\infty}\|v_n\|_{s,V_\infty}^2+\lim_{n\to\infty}\|u\|_{s,V}^2
		\end{split}
	\end{equation*}
This yields from the definitions $S_{s,\gamma}^{V}$ and $S_{s,\gamma}^{V_\infty}$ that
\begin{equation}\label{unlcc}
\Big(S_{s,\gamma}^V\Big)^2	\geq S_{s,\gamma}^{V_\infty}\lim_{n\to\infty}\Big[D_\gamma(v_n,v_n)\Big]^{\frac12}
			+S_{s,\gamma}^V\Big[D_\gamma(u,u)\Big]^{\frac12}
\end{equation}
	
On the other hand, by the nonlocal Brezis-Lieb Lemma \cite[Lemma 2.2]{BFV} we deduce that
	\begin{equation*}
		\begin{split}
			\Big(S_{s,\gamma}^V\Big)^2	&=\lim_{n\to\infty}D_\gamma(u_n,u_n)=\lim_{n\to\infty}D_\gamma(v_n,v_n)+D_\gamma(u,u)
			=\lim_{n\to\infty}D_\gamma(v_n,v_n)+l.
		\end{split}
	\end{equation*}
	So \eqref{unlcc} becomes
	\begin{equation}\label{unlcd}
		\begin{split}
			\Big(S_{s,\gamma}^V\Big)^2&\geq 	
			S_{s,\gamma}^{V_\infty}\Big[\Big(S_{s,\gamma}^V\Big)^2-l\Big]^{\frac12}
			+S_{s,\gamma}^Vl^{\frac12},
		\end{split}
	\end{equation}
	which implies that $l=\Big(S_{s,\gamma}^V\Big)^2$. Indeed, were it not the case, we would deduce  from \eqref{unlcd}, the fact $l<\Big(S_{s,\gamma}^V\Big)^2$ and
$S_{s,\gamma}^{V_\infty}>S_{s,\gamma}^{V}$ that
	\[
	\Big(S_{s,\gamma}^V\Big)^2>S_{s,\gamma}^{V}\Big[\Big(S_{s,\gamma}^V\Big)^2-l\Big]^{\frac12}
	+S_{s,\gamma}^Vl^{\frac12}\geq \Big(S_{s,\gamma}^V\Big)^2,
	\]
	which is a contradiction. Hence, $l=D_\gamma(u,u)=\Big(S_{s,\gamma}^V\Big)^2$.
	
	 By \eqref{unfc} and \eqref{unwc},
	\[
	I_{s,\gamma}^V(u)= \frac{\|u\|_{s,V}^2}{D_\gamma(u,u)^{\frac12}}\leq \frac{\lim_{n\to\infty}\|u_n\|_{s,V}^2}{D_\gamma(u,u)^{\frac12}}=S_{s,\gamma}^V.
	\]

	The definition of $S_{s,\gamma}^V$ imlies  $I_{s,\gamma}^V(u)\geq S_{s,\gamma}^V$. Consequently,

	 $$I_{s,\gamma}^V(u)= S_{s,\gamma}^V.$$
Namely, $u$ is a minimizer of $S_{s,\gamma}^V$.
	Since $\big\||u|\big\|_{s,V}\leq \|u\|_{s,V}$, it follows that $|u|$ is also a minimizer of $S_{s,\gamma}^V$ and
	\[
	\Big[\frac d{dt}\Big(I_{s,\gamma}^V(|u|+t\varphi)\Big)\Big](0)=0,\quad \text{for any }\varphi\in C_c^\infty(\mathbb{R}^N).
	\]
	This yields that $|u|$ solves \eqref{eq:1.1}. We therefore conclude that $|u|$ is a ground state
	solution of \eqref{eq:1.1}.
	
	\bigskip
	$(ii)$ We start with the regularity of $u_s$. Applying Lemma \ref{lemd}  to \eqref{eq:1.1}, we only need  to show that $u_s$
belongs to $L^\infty(\mathbb{R}^N)$.

By \eqref{eq:1.1}, we have
\[
(-\Delta)^su_s\leq (|x|^{-\gamma}*|u_s|^2)u_s,\quad {\rm in} \quad \mathbb{R}^N.
\]

From Lemmas  \ref{lemb2} and \ref{lemc}, we get
	\begin{equation}\label{jj1}
		\begin{split}
			\big\||\cdot|^{-\gamma}*|u_s|^2\big\|_{\frac {2N}\gamma}
			\leq C(N,\gamma)\|u_s\|_{2_{s_0}^*}^2\leq C(N,\gamma,s)\|u_s\|_{H^s(\mathbb{R}^N)}^2.
		\end{split}
	\end{equation}
	
Since $\frac {2N}\gamma>\frac{N}{2s}$, taking $a(x)=|x|^{-\gamma}*|u_s|^2$ and $t=\frac {2N}\gamma$ in Lemma \ref{lem:A.3}, we obtain
\begin{equation}\label{usw}
\begin{split}
		\|u_s\|_\infty&\leq C_1(N,\gamma,s,\|u_s\|_{H^s(\mathbb{R}^N)})\|u_s\|_{2_s^*}
\leq C(N,\gamma,s,\|u_s\|_{H^s(\mathbb{R}^N)}).
\end{split}
	\end{equation}

	As a result, $u_s\in C(\mathbb{R}^N)$.

\bigskip

Next, we establish the decay law for $u_s$.	
	
Choose $\delta=\delta(N,s,\gamma,V_0,u_s)>0$  such that
	\[
	\|u_s\|_\infty^2\int_{\{y:|y|\leq \delta\}}|y|^{-\gamma}\,dy<\frac16V_0,
	\]
	and then choose $R=R(N,s,\gamma,V_0, u_s)>0$ so that
	\[
	\Big(\frac R2\Big)^{-\gamma}\|u_s\|_{H^s(\mathbb{R}^N)}^2<\frac16V_0,\quad
	\delta^{-\gamma}\int_{\{y:|y|\geq \frac R2\}}|u_s|^2\,dx<\frac16V_0.
	\]
	Then, if $|x|\geq R$ we have
	\begin{equation*}
		\begin{split}
			|x|^{-\gamma}*|u_s|^2&=\Big[\int_{\{y: |y|\leq R/2\}}+\int_{\{y: |y|\geq R/2,\,|x-y|\leq \delta\}}+
			\int_{\{y: |y|\geq R/2,\,|x-y|\geq \delta\}}\Big]
			\frac{|u_s(y)|^2}{|x-y|^{\gamma}}\,dy\\
			&\leq \Big(\frac R2\Big)^{-\gamma}\|u_s\|_{H^s(\mathbb{R}^N)}^2+\|u_s\|_\infty^2\int_{\{y:|y|\leq \delta\}}\frac 1{|y|^{\gamma}}\,dy+\frac 1{\delta^{\gamma}}\int_{\{y:|y|\geq \frac R2\}}|u_s|^2\,dx<\frac12V_0.
		\end{split}
	\end{equation*}
	
	Rewrite \eqref{eq:1.1} as
	\[
	(-\Delta)^su_s+\frac12V_0u_s=\Big[|x|^{-\gamma}*|u_s|^2-V(x)+\frac12V_0\Big]u_s:=f_s.
	\]

	For $|x|\geq R$,  since $f_s(x)\leq \Big[|x|^{-\gamma}*|u_s|^2-\frac12V_0\Big]u_s\leq 0$, we have
	\[
	{u}_s=\int_{\mathbb{R}^N}K_{s,\frac12V_0}(x-y)f_s(y)\,dy\leq \int_{\{y:|y|\leq R\}}K_{s,\frac12V_0}(x-y)f_s(y)\,dy,
	\]
	where $K_{s,\frac12V_0}>0$ is the fundamental solution of the equation
	\[
	(-\Delta)^s u+\frac12V_0 u=0.
	\]
	It is known from $(iii)$ of Lemma C.1 in \cite{FLS} that
	\[
	0\leq K_{s,\frac12V_0}\leq C(N,s,V_0)|x|^{-(N-2s)}.
	\]
	Thus, for $|x|\geq 2R$, we have
	\begin{equation*}
		\begin{split}
			{u}_s&\leq \int_{\{y:|y|\leq R\}}|x-y|^{-(N-2s)}f_s(y)\,dy\\
			&\leq \Big(\frac{|x|}2\Big)^{-(N-2s)}\int_{\{y:|y|\leq R\}}f_s(y)\,dy\\
			&\leq \Big(\frac{|x|}2\Big)^{-(N-2s)}\int_{\{y:|y|\leq R\}}(|y|^{-\gamma}*|u_s|^2)u_s(y)\,dy\\
			&\leq \Big(\frac{|x|}2\Big)^{-(N-2s)}\int_{\{y:|y|\leq R\}}\Big[1+(|y|^{-\gamma}*|u_s|^2)^{\frac{2N}\gamma}+|y|^{-\gamma}*|u_s|^2|u_s(y)|^2\Big]\,dy\\
			&\leq C(N,s,\gamma,V_0, u_s)|x|^{-(N-2s)}.
		\end{split}
	\end{equation*}

	This inequality and  \eqref{usw} yield the conclusion.
	\quad $\Box$

\bigskip

Now we establish the nonexistence result.

{\bf Proof of  Theorem \ref{p1}.}
By Lemma \ref{lema}, we know that the function  $\psi=(1+|x|^2)^{-\frac{N-2s_0}{2}}$ satisfies
\[
S_s^H=\frac{\int_{\mathbb{R}^N}|(-\Delta)^{\frac {s_0}2}\psi(x)|^2\,dx}{D_\gamma(\psi,\psi)^{\frac12}}.
\]
For $\varepsilon>0$, set $\psi_\varepsilon(x)=\psi( x/\varepsilon)$. Then,
\begin{equation}\label{sh}
\begin{split}
S_{s,\gamma}^V\leq I_{s,\gamma}^V(\psi_\varepsilon)&=
\frac{\varepsilon^{N-2s}\int_{\mathbb{R}^N}|(-\Delta)^{\frac {s}2}\psi(x)|^2\,dx
+\varepsilon^N\int_{\mathbb{R}^N}V(\varepsilon x)|\psi(x)|^2\,dx}
{\varepsilon^{N-2s_0}D_\gamma(\psi,\psi)^{\frac12}}\\
&=
\frac{\varepsilon^{2s_0-2s}\int_{\mathbb{R}^N}|(-\Delta)^{\frac {s}2}\psi(x)|^2\,dx
+\varepsilon^{2s_0}\int_{\mathbb{R}^N}V(\varepsilon x)|\psi(x)|^2\,dx}
{D_\gamma(\psi,\psi)^{\frac12}}\\
&\leq
\frac{\varepsilon^{2s_0-2s}\int_{\mathbb{R}^N}|(-\Delta)^{\frac {s}2}\psi(x)|^2\,dx
+\varepsilon^{2s_0}V_\infty\int_{\mathbb{R}^N}|\psi(x)|^2\,dx}
{D_\gamma(\psi,\psi)^{\frac12}}.
\end{split}
\end{equation}

If $s\in (0,s_0)$, letting $\varepsilon\to 0$ in \eqref{sh}, we obtain  that
$S_{s,\gamma}^V=0$. It means that  there exists no  minimizer  of  $S_{s,\gamma}^V$ in $H^s(\mathbb{R}^N)\setminus \{0\}$.

If $s=s_0$, we deduce from the inequality
\[
S_{s_0,\gamma}^V\leq\frac{\int_{\mathbb{R}^N}|(-\Delta)^{\frac {s_0}2}\psi(x)|^2\,dx
+\varepsilon^{2s_0}\int_{\mathbb{R}^N}V(\varepsilon x)|\psi(x)|^2\,dx}
{D_\gamma(\psi,\psi)^{\frac12}}\leq S_s^H+C\varepsilon^{2s_0}
\]
that $S_{s_0,\gamma}^V\leq S_{s_0}^H$ by letting $\varepsilon\to 0$.

We claim that there exists no  minimizer of  $S_{s,\gamma}^V$ in $H^s(\mathbb{R}^N)\setminus \{0\}$.
Suppose on the contrary that there exists $u\in H^s(\mathbb{R}^N)\setminus \{0\}$ such that $S_{s_0,\gamma}^V=I_{s_0,\gamma}^V(u)$. Then, the inequality
\[
I_{s_0,\gamma}^V(u)>I_{s_0,\gamma}^0(u)+V_0\|u\|_2^2>I_{s_0,\gamma}^0(u)\geq S_{s_0}^H
\]
yields a contradiction.\quad $\Box$

\bigskip

	\section{Asymptotic behavior of $u_s$ as $s\to s_0$}
	
	\smallskip
	
	For $0<s_0=\gamma/4<s<1$, we know from Theorem \ref{thm1} that there exists a nonnegative ground state solution $u_s$  of \eqref{eq:1.1} such that
\begin{equation}\label{s4-1}
		\begin{split}
			D_\gamma(u_s,u_s)=\|u\|_{s,V}^2=\Big(S_{s,\gamma}^V\Big)^2.
		\end{split}
	\end{equation}
In this section, we study the asymptotic behavior of $u_s$  as $s\downarrow s_0$,  which is described in Theorem \ref{thm2}.
We divide the proof into several steps, each one is in a subsection

	In the sequel, we denote by $C>0$ a positive constant independent of $s$, $k$, and $x$.
	
	\subsection{Blow up analysis}\ \
	
\bigskip
	
	In this subsection, we prove $(i)$ of Theorem \ref{thm2}.  We begin with estimates of $S_{s,\gamma}^V$ and $u_s$.
	
	\begin{Lemma}\label{lem1}
		For any $s\in (s_0,1)$, we have
		
		$(i)$ $S_{s,\gamma}^V\leq C$;
		
		\smallskip

		$(ii)$ $\|u_s\|_{H^{s_0}(\mathbb{R}^N)}^2\leq 2\|u_s\|_{H^{s}(\mathbb{R}^N)}^2\leq C$;

		\smallskip
		$(iii)$ $\|u_s\|_{2_{s_0}^*}\leq C$,\quad  $\|u_s\|_{2_{s}^*}\leq C$;
		
		$(iv)$ $\big\||\cdot|*|u_s|^2\big\|_{\frac {2N}\gamma}\leq C$.
	\end{Lemma}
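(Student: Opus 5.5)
The plan is to obtain all four bounds from a single $s$-independent test function, combined with the norm equivalence \eqref{svh1} and embedding and HLS constants that do not depend on $s$.

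For $(i)$, fix a nonzero $\varphi\in C_c^\infty(\mathbb{R}^N)$; for instance a truncation of $\psi$, or any fixed bump. Since $|\xi|^{2s}\le 1+|\xi|^2$ for every $s\in(0,1)$, we have
\[
\int_{\mathbb{R}^N}|(-\Delta)^{\frac s2}\varphi|^2\,dx=\int_{\mathbb{R}^N}|\xi|^{2s}|\hat\varphi(\xi)|^2\,d\xi\le \|\varphi\|_{H^1(\mathbb{R}^N)}^2 .
\]
Using $V<V_\infty$ from $(V)$, this gives
\[
S_{s,\gamma}^V\le I_{s,\gamma}^V(\varphi)\le \frac{\|\varphi\|_{H^1}^2+V_\infty\|\varphi\|_2^2}{D_\gamma(\varphi,\varphi)^{1/2}}=:C .
\]
The right-hand side is finite and positive and independent of $s$, because $D_\gamma(\varphi,\varphi)>0$ for $\varphi\not\equiv0$.

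For $(ii)$, the normalization \eqref{s4-1} gives $\|u_s\|_{s,V}^2=(S_{s,\gamma}^V)^2\le C^2$. The left inequality in \eqref{svh1}, whose constant $\min\{1,V_0\}$ does not depend on $s$, then yields $\|u_s\|_{H^s}^2\le C$. Lemma \ref{lemf}$(i)$ with $s_1=s_0\le s_2=s$ gives $\|u_s\|_{H^{s_0}}^2\le 2\|u_s\|_{H^s}^2$.

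For $(iii)$, apply Lemma \ref{lemc} twice. With exponent $s_0$ (a fixed number) we get $\|u_s\|_{2_{s_0}^*}\le S(N,s_0)^{-1}\|u_s\|_{\dot H^{s_0}}\le C$, using $(ii)$. With exponent $s$ we get $\|u_s\|_{2_s^*}\le S(N,s)^{-1}\|u_s\|_{\dot H^s}$. Here the point is that $S(N,s)\ge C_1(N)>0$ uniformly in $s$, which is exactly the uniformity asserted in Lemma \ref{lemc}, and $\|u_s\|_{\dot H^s}\le C$ by $(ii)$.

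For $(iv)$, I read the statement as bounding $|\cdot|^{-\gamma}*|u_s|^2$, matching \eqref{jj1}. Apply Lemma \ref{lemb2} with $f=|u_s|^2$, $p=\frac{2_{s_0}^*}{2}=\frac{N}{N-2s_0}$ and $q=\frac{2N}{\gamma}$. The exponent relation holds because $s_0=\gamma/4$ gives $\frac1p+\frac\gamma N=1-\frac{\gamma}{2N}+\frac{\gamma}{N}=1+\frac1q$. The HLS constant $C(p,N,\gamma)$ does not involve $s$, so
\[
\big\||\cdot|^{-\gamma}*|u_s|^2\big\|_{\frac{2N}{\gamma}}\le C\|u_s\|_{2_{s_0}^*}^2\le C
\]
by $(iii)$.

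The only delicate point is making sure every constant is independent of $s$. This concerns the choice of a fixed test function in $(i)$ and the uniform lower bound on the fractional Sobolev constant in $(iii)$. Both are handled by the estimates above, so I expect no real obstacle.
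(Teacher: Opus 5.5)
Your proposal is correct and follows essentially the same route as the paper: a fixed $s$-independent test function for (i), then \eqref{s4-1}, \eqref{svh1} and Lemma \ref{lemf} for (ii), the uniform Sobolev constant of Lemma \ref{lemc} for (iii), and Lemma \ref{lemb2} with $p=\frac{N}{N-2s_0}$, $q=\frac{2N}{\gamma}$ for (iv). Your reading of (iv) as a bound on $|\cdot|^{-\gamma}*|u_s|^2$ is the intended one, and your bound $|\xi|^{2s}\le 1+|\xi|^2$ in (i) is cleaner than the paper's version of the same step.
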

	\begin{proof}
		Choose $\varphi\in C_c^\infty(\mathbb{R}^N)$ such that $D_\gamma(\varphi,\varphi)=1$. By Lemma \ref{lemf},
		\[S_{s,\gamma}^V\leq I_{s,\gamma}^V(\varphi)= \|\varphi\|_{s,V}^2\leq 2\|\varphi\|_{H^1(\mathbb{R}^N)}^2+\|\varphi\|_{1,V}^2,\]
		$(i)$ holds true. $(ii)$  follows from  \eqref{svh1}, \eqref{s4-1}, $(i)$ and Lemma \ref{lemf}.
		$(iii)$  is a consequence of  Lemma \ref{lemc} and $(ii)$. By Lemma \ref{lemb2} and $(iii)$, we see that $(iv)$ is true.
	\end{proof}

	\begin{Lemma}\label{lem3}One has
		
		$(i)$ $\liminf_{s\downarrow s_0}I_{s,\gamma}^0(u_s)\geq S_{s_0}^H$;
		
		\smallskip
		$(ii)$ $\limsup_{s\downarrow s_0}I_{s,\gamma}^V(u_s)\leq S_{s_0}^H$;
		
		\smallskip
		$(iii)$ $\lim_{s\downarrow s_0}I_{s,\gamma}^V(u_s)=\lim_{s\downarrow s_0}D_\gamma(u_s,u_s)^{\frac12}=\lim_{s\downarrow s_0}\|u_s\|_{s,V}= S_{s_0}^H$.
		
		\smallskip
		$(iv)$	$\limsup_{s\downarrow s_0}\|u_s\|_{\dot{H}^{s_0}(\mathbb{R}^N)}\leq S_{s_0}^H$.
	\end{Lemma}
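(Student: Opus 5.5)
We prove the four parts in the order (ii), (i), (iii), (iv). Throughout we write $I_{s,\gamma}^0(u)=\|(-\Delta)^{\frac s2}u\|_2^2/D_\gamma(u,u)^{\frac12}$ for the quotient without potential, and we use the normalization \eqref{s4-1} together with the uniform bounds of Lemma \ref{lem1}.

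\textbf{Part (ii).} Take the test function $\psi_\varepsilon$ from the proof of Theorem \ref{p1}. The computation \eqref{sh} gives
\[
I_{s,\gamma}^V(u_s)=S_{s,\gamma}^V\leq \frac{\varepsilon^{2s_0-2s}\|(-\Delta)^{\frac s2}\psi\|_2^2+\varepsilon^{2s_0}V_\infty\|\psi\|_2^2}{D_\gamma(\psi,\psi)^{\frac12}}.
\]
We need $\psi\in L^2$, which holds because $N\ge4>4s_0$, so that $2(N-2s_0)>N$. We also need $s\mapsto\|(-\Delta)^{\frac s2}\psi\|_2^2=\int|\xi|^{2s}|\hat\psi|^2\,d\xi$ to be continuous at $s_0$. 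This follows from dominated convergence, since $\psi\in H^{s}$ for $s$ near $s_0$; indeed $\hat\psi$ decays exponentially, and at the origin it is integrable against $|\xi|^{2s}$ because $\psi\in L^2$. Fix $\varepsilon$ and let $s\downarrow s_0$. The right-hand side tends to $S_{s_0}^H+\varepsilon^{2s_0}C$. Then let $\varepsilon\to0$ to get (ii).

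\textbf{Part (i).} This is the key step. For $u\in H^s$ and $\gamma<4s$, interpolation in Fourier variables (Lemma \ref{lemf}(ii) with $s_1=s_0$, $s_2=s$) gives
\[
\|u\|_{\dot H^{s_0}}^2\leq \|u\|_{\dot H^{s}}^{2s_0/s}\,\|u\|_2^{2(1-s_0/s)}.
\]
Combined with Lemma \ref{lema} for the pair $(s_0,\gamma=4s_0)$, this yields
\[
S_{s_0}^H D_\gamma(u_s,u_s)^{\frac12}\leq \|u_s\|_{\dot H^{s}}^{2s_0/s}\,\|u_s\|_2^{2(1-s_0/s)}.
\]
By Lemma \ref{lem1}, $\|u_s\|_2\le C$. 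We also need a lower bound $D_\gamma(u_s,u_s)^{\frac12}=S_{s,\gamma}^V\ge c>0$ uniformly in $s$. To get it, note that $\|u_s\|_{s,V}^2=D_\gamma(u_s,u_s)$ and $D_\gamma(u_s,u_s)\le C\|u_s\|_{2_{s_0}^*}^4\le C\|u_s\|_{\dot H^{s_0}}^4\le C\|u_s\|_{H^s}^4$ with $C$ uniform by Lemma \ref{lemf}(i). Hence $\|u_s\|_{s,V}^2\ge c$. Since the exponent $1-s_0/s\to0$ and $\|u_s\|_2$ stays bounded and bounded away from zero (in the worst case pass to subsequences), the factor $\|u_s\|_2^{2(1-s_0/s)}\to1$. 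Here boundedness away from zero follows from $V_0\|u_s\|_2^2\le\|u_s\|_{s,V}^2$ only if the $L^2$ part does not vanish. If $\|u_s\|_2\to0$ along a subsequence, the inequality only improves, since the factor is then at most $1$ for small $\|u_s\|_2$. Thus in every case
\[
\liminf_{s\downarrow s_0}\frac{\|u_s\|_{\dot H^s}^{2s_0/s}}{D_\gamma(u_s,u_s)^{\frac12}}\ge S_{s_0}^H.
\]
Because $\|u_s\|_{\dot H^s}^2$ and $D_\gamma(u_s,u_s)$ are bounded above and below, the exponent change from $2s_0/s$ to $2$ costs a factor tending to $1$. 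This gives $\liminf I_{s,\gamma}^0(u_s)\ge S_{s_0}^H$.

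\textbf{Parts (iii) and (iv).} Since $I^0_{s,\gamma}\le I^V_{s,\gamma}$, parts (i) and (ii) give $\lim I_{s,\gamma}^V(u_s)=S_{s_0}^H$. By \eqref{s4-1}, $D_\gamma(u_s,u_s)^{\frac12}=\|u_s\|_{s,V}=S_{s,\gamma}^V$, so (iii) follows. For (iv), use the interpolation inequality again together with $\|u_s\|_{\dot H^s}^2\le\|u_s\|_{s,V}^2\to (S_{s_0}^H)^2$ and $\|u_s\|_2^{2(1-s_0/s)}\to$ a value at most $1$. This gives $\limsup\|u_s\|_{\dot H^{s_0}}^2\le (S_{s_0}^H)^2$.

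\textbf{Main obstacle.} The delicate step is (i): controlling the interpolation factor $\|u_s\|_2^{2(1-s_0/s)}$ and the exponent mismatch $2s_0/s$ versus $2$ uniformly in $s$. The uniform upper and lower bounds on $\|u_s\|_{\dot H^s}$, $\|u_s\|_{2}$ and $D_\gamma(u_s,u_s)$ from Lemma \ref{lem1} are what make this work.
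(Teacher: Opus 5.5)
Your proof is correct and is essentially the paper's. It uses the same rescaled extremal with dominated convergence in Fourier variables for (ii), the interpolation of Lemma \ref{lemf}(ii) combined with the sharp inequality of Lemma \ref{lema} at $s_0$ for (i), and the same deductions for (iii) and (iv).

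The only real difference is the bookkeeping in (i). The paper absorbs the $L^2$ factor via $\|u_s\|_2^2\le V_0^{-1}\|u_s\|_{s,V}^2=V_0^{-1}D_\gamma(u_s,u_s)$, so it needs only upper bounds. Your change of exponent from $2s_0/s$ to $2$ needs a uniform lower bound on $\|u_s\|_{\dot H^s}$, which you assert rather than derive; it does follow from your displayed inequality. Note also that only the upper bound on $\|u_s\|_2$ matters. Your remark about $\|u_s\|_2$ staying away from zero is not just unnecessary but false: (i) and (iii) together force $\int V|u_s|^2\,dx\to0$.
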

	\begin{proof}
		$(i)$ By Lemmas \ref{lema} and \ref{lemf}, and \eqref{s4-1},  we have
		\begin{equation*}
			\begin{split}
				I_{s,\gamma}^0(u_s)
				&=\frac{\|u_s\|_{\dot{H}^s(\mathbb{R}^N)}^2}{D_\gamma(u_s,u_s)^{\frac12}}\\
				&\geq \frac{\|u_s\|_{\dot{H}^{s_0}(\mathbb{R}^N)}^{\frac{2s}{s_0}}\|u_s\|_2^{-2(\frac{s}{s_0}-1)}}{D_\gamma(u_s,u_s)^{\frac12}}\\
				&=\Big(\frac{\|u_s\|_{\dot{H}^{s_0}(\mathbb{R}^N)}^2}{D_\gamma(u_s,u_s)^{\frac12}}\Big)^{\frac s{s_0}}D_\gamma(u_s,u_s)^{\frac12(\frac s{s_0}-1)}\|u_s\|_2^{-2(\frac{s}{s_0}-1)}\\
				&\geq \Big(S_{s_0}^H\Big)^{\frac s{s_0}}\|u_s\|_{s,V}^{\frac s{s_0}-1} \Big(C\|u_s\|_{s,V}^2\Big)^{-(\frac s{s_0}-1)}\\
				&= C^{-(\frac s{s_0}-1)}\|u_s\|_{s,V}^{-(\frac s{s_0}-1)}\Big(S_{s_0}^H\Big)^{\frac s{s_0}}\\
			\end{split}
		\end{equation*}
Since $s>s_0$, by $(ii)$ of Lemma \ref{lem1} and \eqref{svh1}, we obtain
\[
I_{s,\gamma}^0(u_s)\geq C^{-(\frac s{s_0}-1)}\Big(S_{s_0}^H\Big)^{\frac s{s_0}},
\]
 the conclusion follows by letting $s\downarrow s_0$.

\bigskip
		
		$(ii)$ By Lemma \ref{lema}, we know that
		\[
		\varphi_{s_0}(x)=\big(1+4\pi^2|x|^2\big)^{-\frac{N-2s_0}2}
		\]
		satisfies
		\[
		S_{s_0}^H=\frac{\int_{\mathbb{R}^N}|(-\Delta)^{\frac {s_0}2}\varphi_{s_0}|^2\,dx}{D(\varphi_{s_0},\varphi_{s_0})^{\frac12}}.
		\]
		
		By  \cite[Proposition 6.1.5]{G2009}, we have
		\begin{equation}\label{uste01}
0<\hat{\varphi}_{s_0}(\xi)\leq \tilde{C}(N,s_0)e^{-\frac{|\xi|}{2}} \quad \text{if} \quad
		|\xi|\geq 2
		\end{equation}
		and
		\begin{equation}\label{uste02}
0<\hat{\varphi}_{s_0}(\xi)\leq \tilde{C}(N,s_0)(1+|\xi|^{-2s_0}) \quad \text{if} \quad
		|\xi|\leq 2.
		\end{equation}
		
		Set ${\varphi}_{s_0,\varepsilon}(x)={\varphi}_{s_0}(x/\varepsilon)$ with $\varepsilon>0$. By \eqref{uste01}, \eqref{uste02} and the Lebesgue dominated convergence theorem, we have for $s\downarrow s_0$ that
		\begin{equation*}
			\begin{split}
				\int_{\mathbb{R}^N}|(-\Delta)^{\frac {s}2}\varphi_{s_0,\varepsilon}|^2\,dx&=
				\varepsilon^{N-2s}\int_{\mathbb{R}^N}|\xi|^{2s}|\hat{\varphi}_{s_0}(\xi)|^2\,d\xi\\
				&=\varepsilon^{N-2s_0}
				\Big(\int_{\mathbb{R}^N}|\xi|^{2s_0}|\hat{\varphi}_{s_0}(\xi)|^2\,d\xi+o(1)\Big)\\
				&=\varepsilon^{N-2s_0}
				\Big(\int_{\mathbb{R}^N}|(-\Delta)^{\frac {s_0}2}\varphi_{s_0}|^2\,dx+o(1)\Big).	
			\end{split}
		\end{equation*}
		
		It follows that
		\[
		D_\gamma(\varphi_{s_0,\varepsilon},\varphi_{s_0,\varepsilon})
		=\varepsilon^{2N-4s_0}D_\gamma(\varphi_{s_0},\varphi_{s_0}),
		\]
		and by assumption $(V)$,
		\[
		\int_{\mathbb{R}^N}V|\varphi_{s_0,\varepsilon}|^2\,dx\leq V_\infty\int_{\mathbb{R}^N}|\varphi_{s_0,\varepsilon}|^2\,dx=V_\infty\varepsilon^N\int_{\mathbb{R}^N}|\varphi_{s_0}|^2\,dx.
		\]
		Hence, we have as $s\downarrow s_0$ that
		\begin{equation*}
			\begin{split}
				S_{s,\gamma}^V&\leq I_{s,\gamma}^V(\varphi_{s_0,\varepsilon})\\
				&\leq \frac{\varepsilon^{N-2s_0}
					\Big(\int_{\mathbb{R}^N}|(-\Delta)^{\frac {s_0}2}\varphi_{s_0}|^2\,dx+o(1)\Big)+
					V_\infty\varepsilon^N\int_{\mathbb{R}^N}|\varphi_{s_0}|^2\,dx
				}{\varepsilon^{N-2s_0}D_\gamma(\varphi_{s_0},\varphi_{s_0})^{\frac12}}\\
				&=S_{s_0}^H+o(1)
				+\frac{V_\infty\int_{\mathbb{R}^N}|\varphi_{s_0}|^2\,dx}{D_\gamma(\varphi_{s_0},\varphi_{s_0})^{\frac12}}\varepsilon^{2s_0}.
			\end{split}
		\end{equation*}
		That is,
 \[	\lim_{s\downarrow s_0}S_{s,\gamma}^V\leq S_{s_0}^H+\frac{V_\infty\int_{\mathbb{R}^N}|\varphi_{s_0}|^2\,dx}{D_\gamma(\varphi_{s_0},\varphi_{s_0})^{\frac12}}\varepsilon^{2s_0}.\]

		Since $\varepsilon$ is arbitrary, the assertion follows.

\bigskip
		
		$(iii)$ follows from $(i)$, $(ii)$ and \eqref{s4-1}.

\bigskip
		
		$(iv)$ By Lemmas \ref{lemf} and \ref{lem1}, we have
		\[
		\|u_s\|_{\dot{H}^{s_0}(\mathbb{R}^N)}^2\leq \|u_s\|_{\dot{H}^{s}(\mathbb{R}^N)}^{\frac{2s_0}s}
		\|u_s\|_2^{2(1-\frac {s_0}s)}\leq \|u_s\|_{s,V}^{\frac{2s_0}s}C^{2(1-\frac {s_0}s)}.
		\]
		Letting $s\downarrow s_0$ and applying $(iii)$, we obtain the result.
	\end{proof}

\bigskip
	
	Next, we show $\lim_{s\downarrow s_0}\|u_s\|_\infty=+\infty$, i.e. $\{u_s\}$ blows up as $s\downarrow s_0$.
	\begin{Lemma}\label{lem2}
		For any $s\in (s_0,1)$, one has $\|u_s\|_\infty\geq C>0$.	
	\end{Lemma}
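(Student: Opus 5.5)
The plan is to test the equation $(P_s)$ against $u_s$ itself and bound the nonlocal term from above by a quantity that is small whenever $\|u_s\|_\infty$ is small. From \eqref{s4-1} we have
\[
\|u_s\|_{s,V}^2=D_\gamma(u_s,u_s)=\Big(S_{s,\gamma}^V\Big)^2 .
\]
By Lemma \ref{lem3}(iii) these quantities converge to $(S_{s_0}^H)^2>0$. Moreover, $S_{s,\gamma}^V>0$ for every $s\in(s_0,1)$. We argue by contradiction: suppose there is a sequence $s_k\in(s_0,1)$ with $\|u_{s_k}\|_\infty\to 0$.

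\textbf{Interpolation bound.} Write $p_0=\frac{4N}{2N-\gamma}=2_{s_0}^*$. By Lemma \ref{lemb} with $t=r=\frac{2N}{2N-\gamma}$,
\[
D_\gamma(u,u)\le C(N,\gamma)\|u\|_{p_0}^4 .
\]
Fix any exponent $q\in(2,p_0)$, independent of $s$. Interpolating between $L^q$ and $L^\infty$ gives
\[
\|u_s\|_{p_0}^{p_0}\le \|u_s\|_\infty^{p_0-q}\|u_s\|_q^q .
\]
Since $2<q<p_0\le 2_s^*$, Lemma \ref{lem1}(ii)--(iii) yields the uniform bound
\[
\|u_s\|_q\le \|u_s\|_2^{\theta}\|u_s\|_{2_{s_0}^*}^{1-\theta}\le C,
\]
with $C$ independent of $s$. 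Hence
\[
D_\gamma(u_{s_k},u_{s_k})\le C\,\|u_{s_k}\|_\infty^{4(p_0-q)/p_0}\to 0 .
\]

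\textbf{Contradiction.} This contradicts $D_\gamma(u_{s_k},u_{s_k})=(S_{s_k,\gamma}^V)^2$, because the right-hand side tends to $(S_{s_0}^H)^2>0$ if $s_k\downarrow s_0$. If instead $s_k$ stays away from $s_0$, we need a uniform positive lower bound on $S_{s,\gamma}^V$ for $s$ in compact subsets of $(s_0,1]$; this is the only delicate point.

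\textbf{Uniform lower bound on $S_{s,\gamma}^V$.} First, $\|u\|_{s,V}^2\ge \min\{1,V_0\}\|u\|_{H^s}^2$. Next, $\|u\|_{H^s}^2\ge \frac12\|u\|_{H^{s_0}}^2$ by Lemma \ref{lemf}(i). Finally, $\|u\|_{H^{s_0}}^2\ge S_*^2\|u\|_{2_{s_0}^*}^2$ by Lemma \ref{lemc}, and we recall that $S_*=S(N,s_0)$ is fixed. Combining these with the HLS bound above gives
\[
S_{s,\gamma}^V\ge c(N,\gamma,V_0)>0
\]
uniformly in $s\in(s_0,1)$. Therefore $D_\gamma(u_s,u_s)\ge c^2$ for every $s$, and the interpolation bound yields
\[
\|u_s\|_\infty\ge \big(c^2/C\big)^{p_0/(4(p_0-q))}>0,
\]
which proves the lemma directly, without needing a contradiction or subsequences.

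\textbf{Main obstacle.} The main obstacle is ensuring that all constants are uniform in $s$. The HLS constant is independent of $s$, and the Sobolev constant is evaluated at the fixed exponent $s_0$. The $L^2$ bound follows from \eqref{svh1} together with Lemma \ref{lem1}(i). With these in place, the chain of inequalities is uniform in $s$.
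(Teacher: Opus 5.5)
Your argument is correct, but it follows a different route from the paper's. The paper never bounds $S_{s,\gamma}^V$ from below. Instead it combines the Nehari identity from \eqref{s4-1} with $V\ge V_0$ to get
\[
V_0\|u_s\|_2^2\le \|u_s\|_{s,V}^2=D_\gamma(u_s,u_s)\le C\|u_s\|_{2_{s_0}^*}^{2_{s_0}^*}\le C\|u_s\|_\infty^{2_{s_0}^*-2}\|u_s\|_2^2 .
\]
Here the middle inequality comes from HLS together with $2_{s_0}^*<4$ and the uniform bound in Lemma~\ref{lem1}(iii). The factor $\|u_s\|_2^2$ then cancels, giving $\|u_s\|_\infty^{2_{s_0}^*-2}\ge V_0/C$.

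You instead trap $D_\gamma(u_s,u_s)=(S_{s,\gamma}^V)^2$ between two bounds. From below you use a uniform constant, obtained via $H^s\to H^{s_0}\to L^{2_{s_0}^*}$ plus HLS; this is the same mechanism the paper only uses later, in Lemma~\ref{st1}(ii). From above you use a power of $\|u_s\|_\infty$ times uniformly bounded norms.

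The paper's version is shorter. It is really a statement about any nontrivial solution with a uniform $L^{2_{s_0}^*}$ bound: positivity of $V$ alone forces the sup-norm away from zero, with no reference to the energy level. Yours relies on the ground-state level and therefore needs the extra uniform lower bound on $S_{s,\gamma}^V$. You do supply that bound correctly, with constants depending only on $N,\gamma,V_0$.

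Two simplifications are available. The opening contradiction argument through Lemma~\ref{lem3}(iii) is superfluous once you have the direct bound, so you can drop it. You may also take $q=2$ in the interpolation, since $\|u_s\|_2\le C$ already.
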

	\begin{proof}
		Using Lemma \ref{lemb}, we have
		\[D_\gamma(u_\gamma,u_\gamma)\leq C(N,\gamma)\|u_s\|_{2_{s_0}^*}^4,\]
		where  $2_{s_0}^*=\frac{4N}{N-\gamma}$ since $s_0=\gamma/4$.
		
		The fact $N\geq 4> 4s_0$ implies   $2<2_{s_0}^*<4$.  By Lemma \ref{lem1}$(iii)$, we get
		$$D_\gamma(u_\gamma,u_\gamma)\leq C\|u_s\|_{2_{s_0}^*}^{2_{s_0}^*}.$$
By \eqref{s4-1},
		\[\int_{\mathbb{R}^N}\Big[C|u_s|^{2_{s_0}^*-2}-V_0\Big]|u_s|^2\,dx\geq \int_{\mathbb{R}^N}\Big[|x|^{-\gamma}*|u_s|^2-V\Big]|u_s|^2\,dx=\int_{\mathbb{R}^N}|(-\Delta)^{\frac s2}u(x)|^2\,dx\geq0,\]
		which yields the result.	
	\end{proof}

\bigskip

	\begin{Lemma}\label{lem4}
		We have  $\lim_{s\downarrow s_0}\|u_s\|_\infty=+\infty$.
	\end{Lemma}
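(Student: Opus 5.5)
We argue by contradiction. Suppose there is a sequence $s_k\downarrow s_0$ such that $\|u_{s_k}\|_\infty\le M$ for some $M>0$ independent of $k$. The strategy is to show that such a uniformly bounded family of ground states would converge to a nontrivial function $u_0\in H^{s_0}(\mathbb{R}^N)$ achieving $S_{s_0,\gamma}^V$. That contradicts Theorem \ref{p1}. Alternatively, one can contradict the strict inequality $I^V_{s_0,\gamma}(u)>S^H_{s_0}$ obtained in its proof, combined with Lemma \ref{lem3}(iii).

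\textbf{Step 1 (uniform compactness).} Set $a_k=|x|^{-\gamma}*|u_{s_k}|^2$. By Lemma \ref{lem1}(iv) (read with $|\cdot|^{-\gamma}$), $\|a_k\|_{2N/\gamma}\le C$. Splitting the convolution into the regions $|x-y|\le 1$ and $|x-y|>1$ and using $\|u_{s_k}\|_\infty\le M$ together with Lemma \ref{lem1}(ii), we also get $\|a_k\|_\infty\le C$. Hence the right-hand side $f_k=(a_k-V)u_{s_k}$ of \eqref{eq:1.1} is uniformly bounded in $L^\infty$. Lemma \ref{lemd}, with some fixed $s_1<s_0$, then gives a uniform $C^{0,\alpha}$ bound on $u_{s_k}$.

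\textbf{Step 2 (no vanishing).} Choose maximum points $x_k$. By Lemma \ref{lem2}, $u_{s_k}(x_k)\ge C>0$. The uniform H\"older bound gives $u_{s_k}\ge C/2$ on $B_r(x_k)$ for a fixed radius $r$. Translate, $\tilde u_k=u_{s_k}(\cdot+x_k)$, and pass to a weak limit $u_0$ in $H^{s_0}$. This is possible by Lemma \ref{lem1}(ii) and Lemma \ref{lemf}. By Arzel\`a--Ascoli we have local uniform convergence, so $u_0\ge C/2$ on $B_r(0)$ and in particular $u_0\neq0$.

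\textbf{Step 3 (contradiction).} Passing to the limit in the weak formulation, $u_0$ solves $(-\Delta)^{s_0}u_0+V_\infty' u_0=(|x|^{-\gamma}*u_0^2)u_0$. Here $V_\infty'$ is either $V(\cdot+x_0)$ or the constant $V_\infty$, depending on whether $x_k$ stays bounded. The convergence of $\int (-\Delta)^{s_k/2}\tilde u_k(-\Delta)^{s_k/2}\varphi$ is handled on the Fourier side, using dominated convergence for $\varphi\in C_c^\infty$. The nonlocal term converges by local uniform convergence and HLS. Testing the limit equation with $u_0$ gives
\[
\|u_0\|_{\dot H^{s_0}}^2+\int V' u_0^2=D_\gamma(u_0,u_0).
\]
Weak lower semicontinuity gives $\|u_0\|_{\dot H^{s_0}}^2+\int V'u_0^2\le \liminf\|u_{s_k}\|_{s_k,V}^2=(S^H_{s_0})^2$, again via Lemma \ref{lemf}(ii) and Lemma \ref{lem3}(iv),(iii). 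On the other hand, Lemma \ref{lema} yields $D_\gamma(u_0,u_0)\le (S^H_{s_0})^{-2}\|u_0\|_{\dot H^{s_0}}^4$. Combining these two bounds with the identity gives $\|u_0\|_{\dot H^{s_0}}^2\ge (S^H_{s_0})^2+\int V'u_0^2>(S^H_{s_0})^2$. Since $V'\ge V_0>0$ and $u_0\ne0$, the last inequality is strict, and this contradicts the upper bound. Therefore $\|u_s\|_\infty\to\infty$.

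\textbf{Main obstacle.} The delicate point is Step 3, namely passing to the limit in $(-\Delta)^{s_k}$ with a varying exponent while keeping lower semicontinuity of $\|\tilde u_k\|_{\dot H^{s_0}}$ against $\|\tilde u_k\|_{\dot H^{s_k}}$. I would handle it through the interpolation in Lemma \ref{lemf}(ii), using the uniform $L^2$ bound, exactly as in the proof of Lemma \ref{lem3}(iv). This gives $\|u_0\|_{\dot H^{s_0}}^2\le\liminf\|\tilde u_k\|^2_{\dot H^{s_k}}$. For the potential term, Fatou's lemma applied to $\int V(\cdot+x_k)\tilde u_k^2$ suffices.
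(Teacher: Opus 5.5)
Your proof is correct. Steps 1--2 coincide with the paper's: the uniform bound on $|x|^{-\gamma}*u_{s_k}^2$, the H\"older bound from Lemma \ref{lemd}, translation to maximum points, and nonvanishing from Lemma \ref{lem2}. You diverge in Step 3.

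The paper never passes to the limit in the equation. Since $D_\gamma=\|\cdot\|_{s,V}^2$ for ground states, it writes
$D_\gamma(\tilde u_{s_k},\tilde u_{s_k})^{1/2}=I^0_{s_k,\gamma}(u_{s_k})+\int V(\cdot+x_{s_k})\tilde u_{s_k}^2\,/\,D_\gamma(\tilde u_{s_k},\tilde u_{s_k})^{1/2}$.
The left side tends to $S^H_{s_0}$ by Lemma \ref{lem3}(iii), while $\liminf I^0_{s_k,\gamma}(u_{s_k})\ge S^H_{s_0}$ by Lemma \ref{lem3}(i). Hence the potential term vanishes, i.e.\ $\|\tilde u_{s_k}\|_2\to0$, and Fatou forces the local limit to be $0$, contradicting nonvanishing.

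You instead derive the limit equation $(-\Delta)^{s_0}u_0+V'u_0=(|x|^{-\gamma}*u_0^2)u_0$, covering both cases for $x_k$. Testing it with $u_0$ gives $X+W=D_\gamma(u_0,u_0)$, where $X=\|u_0\|_{\dot H^{s_0}}^2$ and $W=\int V'u_0^2>0$. You then play Lemma \ref{lema} against the semicontinuity bound $X+W\le (S^H_{s_0})^2$. The algebra does close: $X+W\le X^2/(S^H_{s_0})^2$ and $X\le (S^H_{s_0})^2-W$ are incompatible when $W>0$.

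This is the standard ``no nontrivial weak limit at the critical level'' argument, and it identifies the limit profile structurally. The price is real work: passing to the limit with varying exponent and the global convolution term, where weak convergence of $\tilde u_k^2$ in $L^{N/(N-2s_0)}$ is what actually makes it work, plus the interpolation-based bound $\|u_0\|^2_{\dot H^{s_0}}\le\liminf\|\tilde u_k\|^2_{\dot H^{s_k}}$.

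Since you already invoke Lemma \ref{lem3}(iii), which rests on Lemma \ref{lem3}(i), the paper's shortcut was available to you. The inequality $\liminf\|u_{s_k}\|^2_{\dot H^{s_k}}\ge (S^H_{s_0})^2=\lim\|u_{s_k}\|^2_{s_k,V}$ immediately gives $\int V u_{s_k}^2\to0$, with no limit equation needed.

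A minor point: the strategy announced in your first paragraph (a limit achieving $S^V_{s_0,\gamma}$, contradicting Theorem \ref{p1}) is not what Step 3 proves. It would not even hold literally when $|x_k|\to\infty$, because the limit potential is then $V_\infty$. This is harmless, since Step 3 reaches the contradiction directly.
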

	
	\begin{proof}
		Suppose by a contradiction that there exists a sequence $\{s_k\}$ with $s_k\downarrow s_0$ as $k\to\infty$ such that $\|u_{s_k}\|_\infty\leq C$. By $(ii)$ of Lemma \ref{lem1},
		\[
		|x|^{-\gamma}*|u_{s_k}|^2\leq \|u_{s_k}\|_\infty^2\int_{\{y:|x-y|\leq 1\}}|x-y|^{-\gamma}\,dy+\int_{\{y:|x-y|\geq 1\}}|u_{s_k}(y)|^2\,dy\leq C.
		\]
		Taking $s_1=s_0$ and $f=(|x|^{-\gamma}*|u_{s_k}|^2)u_{s_k}-Vu_{s_k}$ in Lemma \ref{lemd} , we deduce by \eqref{eq:1.1} that
		\[
		\|u_{s_k}\|_{C^{0,\alpha}(\mathbb{R}^N)}\leq C
		\]
		for some $\alpha\in(0,s_0)$.
		Let $\tilde{u}_{s_k}(x)=u_{s_k}(x+x_{s_k})$,  where $x_{s_k}$ is a maximum point of $u_{s_k}$. Then
		\[
		\|\tilde{u}_{s_k}\|_{C^{0,\alpha}(\mathbb{R}^N)}\leq C.
		\]
		We know from  Lemma \ref{lem1} that
		\[\|\tilde{u}_{s_k}\|_{H^{s_0}(\mathbb{R}^N)}\leq C,\quad \Big\||\cdot|^{-\gamma}*|\tilde{u}_{s_k}|^2\Big\|_{\frac{2N}{\gamma}}\leq C.\]
		By Lemma \ref{lem2}, we have $\tilde{u}_{s_k}(0)=\|{u}_{s_k}\|_\infty\geq C>0$.
		
		Hence we may assume that
		$\tilde{u}_{s_k}\to \tilde{u}\neq 0$ in $C_{loc}(\mathbb{R}^N)$,
		$\tilde{u}_{s_k}\rightharpoonup \tilde{u}$ weakly in $H^{s_0}(\mathbb{R}^N)$,
		$|x|^{-\gamma}*|\tilde{u}_{s_k}|^2\rightharpoonup |x|^{-\gamma}*|\tilde{u}|^2$ weakly in $L^{\frac{2N}\gamma}(\mathbb{R}^N)$ as $k\to\infty$.
		Moreove, $\tilde{u}_{s_k}$ satisfies
		\begin{equation}\label{uste}
			(-\Delta)^{s_k}\tilde{u}_{s_k}+V(x+x_{s_k})\tilde{u}_{s_k}=(|x|^{-\gamma}*|\tilde{u}_{s_k}|^2)\tilde{u}_{s_k}.
		\end{equation}
		
		By \eqref{s4-1} and assumption $(V)$, we obtain
		\[
		D_\gamma(\tilde{u}_{s_k},\tilde{u}_{s_k})=\|\tilde{u}_{s_k}\|_{s, V(\cdot,x_{s_k})}^2\geq
		\|(-\Delta)^{\frac{s_k}{2}}\tilde{u}_{s_k}\|_2^2+V_0\|\tilde{u}_{s_k}\|_2^2.
		\]
		$(i)$ and $(iii)$ of Lemma \ref{lem3} then yield
		\begin{equation*}
			\begin{split}
				S_{s_0}^H&=\lim_{k\to\infty}D_{\gamma}(\tilde{u}_{s_k},\tilde{u}_{s_k})^{\frac12}\\
				&=\lim_{k\to\infty}\frac{\|\tilde{u}_{s_k}\|_{s, V(\cdot,x_{s_k})}^2}{D_{\gamma}(\tilde{u}_{s_k},\tilde{u}_{s_k})^{\frac12}}\\
				&\geq \liminf_{k\to\infty}I_{s,\gamma}^0(\tilde{u}_{s_k})
				+\liminf_{k\to\infty}\frac{V_0|\tilde{u}_{s_k}\|_2^2}
				{D_\gamma(\tilde{u}_{s_k},\tilde{u}_{s_k})^{\frac12}}\\
&= \liminf_{k\to\infty}I_{s,\gamma}^0({u}_{s_k})
				+\liminf_{k\to\infty}\frac{V_0|\tilde{u}_{s_k}\|_2^2}
				{D_\gamma(\tilde{u}_{s_k},\tilde{u}_{s_k})^{\frac12}}\\
				&\geq S_{s_0}^H+\frac{V_0|\tilde{u}\|_2^2}
				{S_{s_0}^H},
			\end{split}
		\end{equation*}
		which implies $\tilde{u}=0$, a contradiction.
	\end{proof}
	
\bigskip

	Let
	\[v_s=\mu_s^{\alpha_s} u_s(\mu_s x+x_s),\]
	where
	\[\mu_s^{\alpha_s}=\|u_s\|_\infty^{-1}=u_s(x_s)^{-1},\quad \alpha_s=\frac{N+2s-\gamma}{2}.\]
	Then, $\lim_{s\downarrow s_0}\mu_s=0$, $0\leq v_s\leq 1$ and $v_s$ solves
	\begin{equation}\label{vse}
		(-\Delta)^sv_s+\mu_s^{2s}V(\mu_sx+x_s)v_s=(|x|^{-\gamma}*|v_s|^2)v_s
	\end{equation}
	with
	\begin{equation}\label{vs0}
		v_s(0)=\|v_s\|_\infty=1.	
	\end{equation}	
	
	Next, we give some estimate for $v_s$.
	\begin{Lemma}\label{lem4a}For $s>s_0$ close to $s_0$, we have
		
		\smallskip	
		$(i)$ 	$\|v_s\|_{\dot{H}^s(\mathbb{R}^N)}^2\leq C$, 	$\|v_s\|_{2_s^*}^{2_s^*}\leq C$;
		
		\smallskip
		$(ii)$ 	$\|v_s\|_{\dot{H}^{s_0}(\mathbb{R}^N)}^2\leq C$, 	$\|v_s\|_{2_{s_0}^*}^{2_{s_0}^*}\leq C$;
		
		\smallskip
		$(iii)$ $\Big\||\cdot|^{-\gamma}*|v_s|^2v_s\Big\|_{\frac {2N}\gamma}^{\frac {2N}\gamma}\leq C$.
		
		\smallskip
		$(iv)$ $0\leq \limsup_{s\downarrow s_0}|\cdot|^{-\gamma}*|v_s|^2\leq C$.
		
	\end{Lemma}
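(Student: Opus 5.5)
The plan is to transfer the uniform bounds of Lemma \ref{lem1} from $u_s$ to $v_s$ by tracking how each quantity scales under $v_s(x)=\mu_s^{\alpha_s}u_s(\mu_s x+x_s)$, with $\alpha_s=\frac{N+2s-\gamma}{2}$. A change of variables gives
\[
\|v_s\|_{\dot H^s}^2=\mu_s^{2\alpha_s+2s-N}\|u_s\|_{\dot H^s}^2=\mu_s^{4s-\gamma}\|u_s\|_{\dot H^s}^2,
\qquad
D_\gamma(v_s,v_s)=\mu_s^{4\alpha_s+\gamma-2N}D_\gamma(u_s,u_s)=\mu_s^{4s-\gamma}D_\gamma(u_s,u_s).
\]
Since $4s-\gamma=4(s-s_0)>0$, $\mu_s\to0$ (by Lemma \ref{lem4}) and $\mu_s\le C$ (by Lemma \ref{lem2}), we get $\mu_s^{4s-\gamma}\le C$ for $s$ near $s_0$. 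Then \eqref{s4-1} and Lemma \ref{lem1}$(i)$ bound $\|u_s\|_{\dot H^s}^2\le\|u_s\|_{s,V}^2\le C$, so the first half of $(i)$ follows. The second half of $(i)$ follows from Lemma \ref{lemc}, since $S_*$ is bounded below uniformly in $N$ and $2_s^*$ stays bounded.

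For $(ii)$, I would use the equation \eqref{vse} tested against $v_s$. This gives $\|v_s\|_{\dot H^s}^2\le D_\gamma(v_s,v_s)$, and $D_\gamma(v_s,v_s)=\mu_s^{4s-\gamma}D_\gamma(u_s,u_s)\le C$. To reach $\dot H^{s_0}$, I would not use Lemma \ref{lemf}$(ii)$ directly, because $\|v_s\|_2=\mu_s^{\alpha_s-N/2}\|u_s\|_2=\mu_s^{s-\gamma/2}\|u_s\|_2$ may blow up. Instead I would split the Fourier integral into $|\xi|\ge1$ and $|\xi|\le1$. On $|\xi|\ge1$ we have $|\xi|^{2s_0}\le|\xi|^{2s}$, so that part is bounded by $\|v_s\|_{\dot H^s}^2$.

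The low-frequency part is the main obstacle, since it needs an $L^2$-type control that does not come for free. The first thing I would try is to compute it by scaling: $\|v_s\|_{\dot H^{s_0}}^2=\mu_s^{2\alpha_s+2s_0-N}\|u_s\|_{\dot H^{s_0}}^2=\mu_s^{2s-2s_0}\|u_s\|_{\dot H^{s_0}}^2$. The exponent $2s-2s_0$ is positive, so this is $\le C$ by Lemma \ref{lem3}$(iv)$ or Lemma \ref{lem1}$(ii)$, which removes the obstacle directly. The $L^{2_{s_0}^*}$ bound then follows from Lemma \ref{lemc} with $s=s_0$ fixed. Alternatively, $\|v_s\|_{2_{s_0}^*}$ rescales as $\mu_s^{\alpha_s-(N-2s_0)/2}\|u_s\|_{2_{s_0}^*}=\mu_s^{s-s_0}\|u_s\|_{2_{s_0}^*}$, which is bounded by Lemma \ref{lem1}$(iii)$.

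For $(iii)$ and $(iv)$, Lemma \ref{lemb2} with $p=\frac{2N}{2N-\gamma}\cdot\frac12\cdot2$, i.e. $|v_s|^2\in L^{\frac{2N}{2N-\gamma}}$ and $q=\frac{2N}{\gamma}$, gives
\[
\big\||\cdot|^{-\gamma}*|v_s|^2\big\|_{\frac{2N}{\gamma}}\le C\|v_s\|_{\frac{4N}{2N-\gamma}}^2 .
\]
Since $\frac{4N}{2N-\gamma}=2_{s_0}^*$, part $(ii)$ bounds the right-hand side. Multiplying by $0\le v_s\le1$ gives $(iii)$. For the pointwise bound $(iv)$, I would split the convolution at $|x-y|=1$:
\[
|\cdot|^{-\gamma}*|v_s|^2(x)\le\|v_s\|_\infty^2\int_{|z|\le1}|z|^{-\gamma}\,dz+\int_{|x-y|\ge1}|x-y|^{-\gamma}|v_s(y)|^2\,dy .
\]
The first term is bounded since $\gamma<4\le N$ and $\|v_s\|_\infty=1$. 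For the second term, I would apply H\"older with $|v_s|^2\in L^{2_{s_0}^*/2}$ and $|z|^{-\gamma}\chi_{|z|\ge1}\in L^{r'}$, where $r'=\frac{2N}{\gamma}$. This exponent is conjugate to $\frac{2N}{2N-\gamma}$, and $\gamma r'=2N>N$ ensures integrability at infinity. Together with $(ii)$ this gives the uniform bound and hence $(iv)$.
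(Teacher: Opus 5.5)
Your proposal is correct and follows essentially the paper's route: the scaling identities $\|v_s\|_{\dot{H}^s(\mathbb{R}^N)}^2=\mu_s^{4(s-s_0)}\|u_s\|_{\dot{H}^s(\mathbb{R}^N)}^2$ and $\|v_s\|_{\dot{H}^{s_0}(\mathbb{R}^N)}^2=\mu_s^{2(s-s_0)}\|u_s\|_{\dot{H}^{s_0}(\mathbb{R}^N)}^2$ together with $\mu_s\to0$ give $(i)$--$(ii)$, and $(iv)$ is the same split at $|x-y|=1$ followed by H\"older against $|z|^{-2N}$. The only difference is in $(iii)$: you bound $\big\||\cdot|^{-\gamma}*|v_s|^2\big\|_{\frac{2N}{\gamma}}$ by Lemma \ref{lemb2} and $(ii)$, whereas the paper rescales Lemma \ref{lem1}$(iv)$; the two are equivalent (also, the Fourier-splitting detour in $(ii)$ is unnecessary, and the Sobolev constant $S_*$ is uniform in $s$, not in $N$).
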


	\begin{proof}
		$(i)$	Since $\lim_{s\downarrow s_0}\mu_s=0$, $s>s_0$, by $(ii)$ of Lemma \ref{lem1}, for $s>s_0$ close to $s_0$, we have
		\[
		\|v_s\|_{\dot{H}^s(\mathbb{R}^N)}^2=\mu_s^{4(s-s_0)}\|u_s\|_{\dot{H}^s(\mathbb{R}^N)}^2\leq \|u_s\|_{\dot{H}^s(\mathbb{R}^N)}^2\leq C.
		\]
		By Lemma \ref{lemc},
$$\|v_s\|_{2_s^*}^{2_s^*}\leq C\|v_s\|_{\dot{H}^s(\mathbb{R}^N)}^{2_s^*}\leq C.$$

The conclusion in $(i)$ follows.

\bigskip

Since $\|v_s\|_{\dot{H}^{s_0}(\mathbb{R}^N)}^2=\mu_s^{2(s-s_0)}\|u_s\|_{\dot{H}^{s_0}(\mathbb{R}^N)}^2$, the proof of $(ii)$ is similar to $(i)$.
\bigskip		
		
$(iii)$ Since $0\leq v_s\leq 1$ and $\lim_{s\downarrow s_0}\mu_s=0$, we deduce   from $(iv)$ of Lemma \ref{lem1} for $s>s_0$ close to $s_0$ that
		\begin{equation*}
			\begin{split}
				\Big\||\cdot|^{-\gamma}*|v_s|^2v_s\Big\|_{\frac {2N}\gamma}^{\frac {2N}\gamma}
				\leq \Big\||\cdot|^{-\gamma}*|v_s|^2\Big\|_{\frac {2N}\gamma}^{\frac {2N}\gamma}
				=\mu_s^{\frac{4(s-s_0)}{\gamma}N}	\Big\||\cdot|^{-\gamma}*|u_s|^2\Big\|_{\frac {2N}\gamma}^{\frac {2N}\gamma}\leq C.
			\end{split}
		\end{equation*}
\bigskip
		
		$(iv)$ Noting $0\leq v_s\leq 1$, by   $(ii)$ we get
		\begin{equation*}
			\begin{split}
				0&\leq |x|^{-\gamma}*|v_s|^2\\
				&=\Big(\int_{\{y: |x-y|\leq 1\}}+\int_{\{y:|x-y|\geq 1\}}\Big)\frac{|v_s(y)|^2}{|x-y|^\gamma}\,dy\\
				&\leq \int_{\{y:|x-y|\leq 1\}}|x-y|^{-\gamma}\,dy+
				\Big(\int_{\{y:|x-y|\geq 1\}}|v_s|^{2_{s_0}^*}\Big)^{\frac2{2_{s_0}^*}}
				\Big(\int_{\{y:|x-y|\geq 1\}}|x-y|^{-2N}\,dy\Big)^{1-\frac2{2_{s_0}^*}}\\
				&\leq \int_{\{x:|x|\leq 1\}}|x|^{-\gamma}\,dx
				+\Big(\int_{\mathbb{R}^N}|v_s|^{2_{s_0}^*}\Big)^{\frac2{2_{s_0}^*}}
				\Big(\int_{\{x:|x|\geq 1\}}|x|^{-2N}\,dy\Big)^{1-\frac2{2_{s_0}^*}}\leq C.
			\end{split}
		\end{equation*}
	\end{proof}

\bigskip
	
	Now we study  the asymptotic behavior of $v_s$ as $s\downarrow s_0$.
	\begin{Lemma}\label{lem4b}There exists $C(N,s_0)>0$ such that
		\[
		\lim_{s\downarrow s_0} v_s(x)=v(x):=\Big(1+\frac{|x|^2}{C(N,s_0)}\Big)^{-\frac{N-2s_0}{2}}
		\quad {\rm in} \quad \dot{H}^{s_0}(\mathbb{R}^N)\cap L^{2_{s_0}^*(\mathbb{R}^N)}\cap C_{loc}(\mathbb{R}^N)
		\]
		and
		\[
		\lim_{s\downarrow s_0}\mu_s^{s-s_0}=1.
		\]
	\end{Lemma}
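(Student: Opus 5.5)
Our approach is a compactness argument for $\{v_s\}$, identification of the limit equation, and classification of its positive solutions. The key input is the energy identity from Lemma \ref{lem3}$(iii)$.

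\emph{Step 1: local limit.} By \eqref{vs0} and Lemma \ref{lem4a}$(iv)$, the right-hand side $f_s:=(|x|^{-\gamma}*|v_s|^2)v_s-\mu_s^{2s}V(\mu_sx+x_s)v_s$ of \eqref{vse} is bounded in $L^\infty$ uniformly for $s$ near $s_0$, since $\mu_s\to0$. Lemma \ref{lemd} with $s_1=s_0$ then gives $\|v_s\|_{C^{0,\alpha}}\le C$. Combining this with Lemma \ref{lem4a}$(ii)$, along any sequence $s_k\downarrow s_0$ we extract $v_{s_k}\to v$ in $C_{loc}$, weakly in $\dot H^{s_0}$ and in $L^{2^*_{s_0}}$, and $|x|^{-\gamma}*|v_{s_k}|^2\to|x|^{-\gamma}*|v|^2$ locally uniformly. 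The last convergence follows from the splitting used in Lemma \ref{lem4a}$(iv)$, with the near part handled by $C_{loc}$ convergence and the far part by weak convergence. Since $v(0)=1$, we have $v\not\equiv0$. Passing to the limit in the weak form of \eqref{vse} against $\varphi\in C_c^\infty$ (where $\langle(-\Delta)^{s_k/2}v_{s_k},(-\Delta)^{s_k/2}\varphi\rangle\to\langle(-\Delta)^{s_0/2}v,(-\Delta)^{s_0/2}\varphi\rangle$ by Fourier/dominated convergence) gives
\[
(-\Delta)^{s_0}v=(|x|^{-\gamma}*|v|^2)v,\qquad v\ge0.
\]
We also record that $\mu_{s_k}^{s_k-s_0}$ is bounded, since $\mu_s\to0$ forces it to be $\le1$, so along a further subsequence $\mu_{s_k}^{s_k-s_0}\to\lambda\in[0,1]$.

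\emph{Step 2: energy and $\lambda=1$.} Rescaling gives
\[
D_\gamma(v_s,v_s)=\mu_s^{4(s-s_0)}D_\gamma(u_s,u_s)\quad\text{and}\quad \|v_s\|_{\dot H^{s_0}}^2=\mu_s^{2(s-s_0)}\|u_s\|_{\dot H^{s_0}}^2 .
\]
Testing the limit equation with $v$ and applying Lemma \ref{lema} gives $\|v\|_{\dot H^{s_0}}^2=D_\gamma(v,v)\ge (S^H_{s_0})^2$. On the other hand, weak lower semicontinuity together with Lemma \ref{lem3}$(iii)$,$(iv)$ yields
\[
\|v\|_{\dot H^{s_0}}^2\le\liminf \mu_{s_k}^{2(s_k-s_0)}\|u_{s_k}\|^2_{\dot H^{s_0}}\le \lambda^2 (S^H_{s_0})^2 .
\]
Hence $\lambda=1$ and all inequalities are equalities. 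In particular $\|v_{s_k}\|_{\dot H^{s_0}}\to\|v\|_{\dot H^{s_0}}$, so the convergence is strong in $\dot H^{s_0}$ and therefore in $L^{2^*_{s_0}}$ by Lemma \ref{lemc}. Moreover $v$ attains $S^H_{s_0}$.

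\emph{Step 3: identification.} By Lemma \ref{lema}, $v=C\big(t/(t^2+|x-x_0|^2)\big)^{(N-2s_0)/2}$. Since $v\le1=v(0)$, the origin is the maximum point, so $x_0=0$ and $v(0)=1$. The Euler equation $(-\Delta)^{s_0}v=(|x|^{-\gamma}*v^2)v$ fixes the product $C\,t^{-(N-2s_0)/2}\cdot$(scaling) consistently, so that $v=(1+|x|^2/C(N,s_0))^{-(N-2s_0)/2}$ with a uniquely determined $C(N,s_0)$ ($=t^2$). Because the limit is independent of the subsequence, the full family converges, and $\mu_s^{s-s_0}\to1$.

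\emph{Main obstacle.} The delicate point is the strong convergence in Step 2, specifically the inequality $\|v\|^2_{\dot H^{s_0}}\ge D_\gamma(v,v)$ versus the upper bound. This requires the limit equation to hold with exactly the convolution term, i.e.\ controlling the nonlocal term $|x|^{-\gamma}*|v_s|^2$ globally and not just locally. To test the equation with $v$ itself, I would first test with cutoffs $v\eta_R$ and let $R\to\infty$, using $v\in L^{2^*_{s_0}}$ and HLS (Lemma \ref{lemb}) to bound the tails.
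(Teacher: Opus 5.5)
Your proposal is correct and follows essentially the same route as the paper. The shared steps are: uniform H\"older bounds from Lemma~\ref{lemd}; passage to the limit equation $(-\Delta)^{s_0}v=(|x|^{-\gamma}*|v|^2)v$ with $v(0)=1$; a two-sided estimate forcing $\mu_{s_k}^{s_k-s_0}\to 1$, $v$ to attain $S_{s_0}^H$, and $\|v_{s_k}\|_{\dot H^{s_0}}\to\|v\|_{\dot H^{s_0}}$; and classification via Lemma~\ref{lema}.

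The only difference is cosmetic. You close the squeeze with weak lower semicontinuity of the $\dot H^{s_0}$-norm and Lemma~\ref{lem3}$(iv)$. The paper instead applies Fatou to $D_\gamma(v_{s_k},v_{s_k})^{1/2}=\mu_{s_k}^{2(s_k-s_0)}D_\gamma(u_{s_k},u_{s_k})^{1/2}$ together with Lemma~\ref{lem3}$(iii)$, and uses $(iv)$ only afterwards for the strong convergence. Your cutoff/HLS argument for testing the limit equation with $v$ fills in a step the paper leaves implicit.
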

	\begin{proof}
		Let $\{s_k\}$ be a sequence such that $s_k\downarrow s_0$ as $k\to \infty$.
Then $\lim_{k\to\infty}\mu_{s_k}=0$.
By Lemma \ref{lem4a}, we may assume that
		\begin{equation*}
			\begin{split}
				&v_{s_k}\rightharpoonup v \quad \text{weakly in }\quad \dot{H}^{s_0}(\mathbb{R}^N);\\
				&v_{s_k}\rightharpoonup v \quad \text{weakly in }\quad L^{2_{s_0}^*}(\mathbb{R}^N);\\
				&(|x|^{-\gamma}*|v_s|^2)v_s \rightharpoonup (|x|^{-\gamma}*|v|^2)v \quad \text{weakly in }\quad L^{\frac{2N}{\gamma}}(\mathbb{R}^N);\\
				&|\xi|^{s_k}\hat{v}_{s_k} \rightharpoonup |\xi|^{s_0}\hat{v}\quad \text{weakly in } \quad L^{2}(\mathbb{R}^N)
			\end{split}
		\end{equation*}
		as $k\to\infty$.
		
		Let  $\varphi\in C_{c}^\infty(\mathbb{R}^N)$. Since
		$$|\xi|^{2s_k}|\hat{\varphi}|^2\leq (1+|\xi|^2)|\hat{\varphi}|^2\in L^1(\mathbb{R}^N),$$
 we have
		$$|\xi|^{s_k}\hat{\varphi}\to |x|^{s_0}\hat{\varphi}\quad {\rm in}\quad L^{2}(\mathbb{R}^N)$$
as $k\to\infty$.
		Therefore, by \eqref{vse} and $\lim_{k\to\infty}\mu_{s_k}=0$,
		\[
		\int_{\mathbb{R}^N}|\xi|^{2s_0}\hat{v}\hat{\varphi}\,d\xi=\int_{\mathbb{R}^N}(|x|^{-\gamma}*|v|^2)v\varphi\,dx,
		\]
		namely, $v$ solves
		\begin{equation}\label{ve}
			(-\Delta)^{s_0}v=(|x|^{-\gamma}*|v|^2)v.
		\end{equation}
		
		By \eqref{vse}, $0\leq v_s\leq 1$, $\lim_{k\to\infty}\mu_{s_k}=0$ and $(iv)$ of Lemma \ref{lem4a}, we have
$$\|(-\Delta)^{s_k}v_{s_k}\|_\infty^2\leq C.$$
 Lemma \ref{lemd} yields that there exists $\alpha\in (0,s_0)$ such that $\|v_{s_k}\|_{C^{0,\alpha}(\mathbb{R}^N)}\leq C$. Hence, we may assume that
		\[
		v_{s_k}\to v \quad \text{in}\quad C_{loc}(\mathbb{R}^N)\quad \text{as}\quad k\to\infty.
		\]
		The fact  $v_{s_k}(0)=1$ implies $v(0)=1$.
		
		By the definition of $S_{s_0}^H$, $\lim_{k\to\infty}\mu_{s_k}=0$ and $(iii)$ of Lemma \ref{lem3}, we obtain
		\begin{equation*}
			\begin{split}
				S_{s_0}^H&\leq \frac{\|v\|_{\dot{H}^{s_0}(\mathbb{R}^N)}^2}{D_\gamma(v,v)^{\frac12}}
				=D_\gamma(v,v)^{\frac12}=\|v\|_{\dot{H}^{s_0}(\mathbb{R}^N)}\\
				&\leq \liminf_{k\to\infty}D_\gamma(v_k,v_k)^{\frac12}\\
				&=\liminf_{k\to\infty}\mu_{s_k}^{2(s_k-s_0)}D_\gamma(u_k,u_k)^{\frac12}\\
				&=\liminf_{k\to\infty}\mu_{s_k}^{2(s_k-s_0)}S_{s_0}^H\\
				&\leq \limsup_{k\to\infty}\mu_{s_k}^{2(s_k-s_0)}S_{s_0}^H\\
				&\leq S_{s_0}^H,
			\end{split}
		\end{equation*}
		which yields that

$$\lim_{k\to\infty}\mu_{s_k}^{s_k-s_0}=1$$
 and
		\[
		\frac{\|v\|_{\dot{H}^{s_0}(\mathbb{R}^N)}^2}{D_\gamma(v,v)^{\frac12}}
		=D_\gamma(v,v)^{\frac12}=\|v\|_{\dot{H}^{s_0}(\mathbb{R}^N)}=S_{s_0}^H.
		\]
		By \eqref{ve}, $v(0)=1$ and Lemma \ref{lema}, there exists a $C(N,s_0)>0$ such that
		\begin{equation}\label{vgs}
			v(x)=\Big(1+\frac{|x|^2}{C(N,s_0)}\Big)^{-\frac{N-2s_0}{2}}.
		\end{equation}

		 By $(iv)$ of Lemma \ref{lem3} and $\lim_{k\to\infty}\mu_{s_k}^{s_k-s_0}=1$, we obtain
		\[
		\limsup_{k\to\infty}\|v_{s_k}\|_{\dot{H}^{s_0}(\mathbb{R}^N)}
		=\limsup_{k\to\infty}\mu_{s_k}^{2(s_k-s_0)}\|u_{s_k}\|_{\dot{H}^{s_0}(\mathbb{R}^N)}\leq S_{s_0}^H.
		\]

		Using that $$\liminf_{k\to\infty}\|v_{s_k}\|_{\dot{H}^{s_0}(\mathbb{R}^N)}\geq
		\|v\|_{\dot{H}^{s_0}(\mathbb{R}^N)}= S_{s_0}^H,$$
 one has $$\lim_{k\to\infty}\|v_{s_k}\|_{\dot{H}^{s_0}(\mathbb{R}^N)}=\|v\|_{\dot{H}^{s_0}(\mathbb{R}^N)}.$$ This implies
		$v_{s_k}\to v$ strongly in $\dot{H}^{s_0}(\mathbb{R}^N)$ and then in $L^{2_{s_0}^*}(\mathbb{R}^N)$ by Lemma \ref{lemc}. Since $\{s_k\}$ is arbitrary, the conclusion follows.
	\end{proof}

\bigskip

	\bigskip
	
	\subsection{Proof of $(i)$ in Theorem \ref{thm2}}
	
	\bigskip
	By Lemma \ref{lem4b}, to complete the proof of Theorem \ref{thm2}$(i)$, we need to show
	\[\lim_{s\downarrow s_0}	v_s=v=\Big(1+\frac{|x|^2}{C(N,s_0)}\Big)^{-\frac{N-2s_0}{2}}
	\quad \text{in }\quad   C(\mathbb{R}^N).\]

	Since $\lim_{s\downarrow s_0}	v_s=v$ in $C_{loc}(\mathbb{R}^N)$, the conclusion will be proved once we establish a uniformly decaying law of $v_s$ with respect to $s$ at infinity. To this aim, we  give some estimates on $\tilde{u}_s$ and $v_s$.

	Denote $K_{f}(x)=|x|^{-\gamma}\ast |f|^2$.
	
	\begin{Lemma}\label{lem6a}
		We have $K_{v_s}\to K_v$ strongly in $L^{\frac{N}{2s_0}}(\mathbb{R}^N)$ as $s\downarrow s_0$.	
	\end{Lemma}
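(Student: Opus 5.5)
We will deduce the claim from the strong convergence $v_s\to v$ in $L^{2_{s_0}^*}(\mathbb{R}^N)$ in Lemma \ref{lem4b}, combined with the linearity of the Riesz potential and Lemma \ref{lemb2}.

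First we check the exponents. We have $2_{s_0}^*=\frac{2N}{N-2s_0}$, so $|v_s|^2\in L^{p}$ with $p=\frac{2_{s_0}^*}{2}=\frac{N}{N-2s_0}$. Lemma \ref{lemb2} with this $p$ gives $\frac1q=\frac1p+\frac{\gamma}{N}-1=\frac{N-2s_0}{N}+\frac{4s_0}{N}-1=\frac{2s_0}{N}$, using $\gamma=4s_0$. Hence $q=\frac{N}{2s_0}$, which is exactly the target exponent, and $p>1$ holds because $N>2s_0$.

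Since $K_{v_s}-K_v=|x|^{-\gamma}*\big(|v_s|^2-|v|^2\big)$, Lemma \ref{lemb2} yields
\[
\|K_{v_s}-K_v\|_{\frac{N}{2s_0}}\le C(N,\gamma)\,\big\||v_s|^2-|v|^2\big\|_{\frac{N}{N-2s_0}}.
\]
It therefore suffices to show that $|v_s|^2\to|v|^2$ in $L^{2_{s_0}^*/2}$. Write $|v_s|^2-|v|^2=(v_s-v)(v_s+v)$, since $v_s,v\ge0$. Hölder's inequality with exponents $2$ and $2$ relative to $L^{2_{s_0}^*/2}$ then gives
\[
\big\||v_s|^2-|v|^2\big\|_{\frac{2_{s_0}^*}{2}}\le \|v_s-v\|_{2_{s_0}^*}\big(\|v_s\|_{2_{s_0}^*}+\|v\|_{2_{s_0}^*}\big).
\]
The bracket is uniformly bounded by Lemma \ref{lem4a}(ii), and the first factor tends to zero by Lemma \ref{lem4b}. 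This completes the argument.

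There is no real obstacle here. The only points to verify are that the HLS exponent lands precisely on $\frac{N}{2s_0}$, which depends on the choice $s_0=\gamma/4$, and that Lemma \ref{lem4b} gives convergence along the full family $s\downarrow s_0$ and not only along a subsequence. The latter holds because the limit $v$ is uniquely determined, so every sequence $s_k\downarrow s_0$ converges to the same $v$.
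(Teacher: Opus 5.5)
Your proof is correct and follows essentially the paper's route: Lemma \ref{lemb2} with $p=\frac{N}{N-2s_0}$ and $q=\frac{N}{2s_0}$ (using $\gamma=4s_0$), the factorization $|v_s|^2-|v|^2=(|v_s|+|v|)(|v_s|-|v|)$, H\"older's inequality, and the strong $L^{2_{s_0}^*}$ convergence from Lemma \ref{lem4b}. The only difference is the order of steps. The paper first applies Cauchy--Schwarz inside the convolution and H\"older in $x$, then uses HLS separately on $|v_s-v|^2$ and $2(|v_s|^2+|v|^2)$; you apply HLS once to the signed difference, which is slightly more direct and gives the same bound $\|K_{v_s}-K_v\|_{\frac{N}{2s_0}}\le C\|v_s-v\|_{2_{s_0}^*}$.
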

	\begin{proof}
		 By the H\"{o}lder inequality,  Lemmas \ref{lemb2} and \ref{lem4a}, we deduce
		\begin{equation*}
			\begin{split}
				&\|K_{v_s}-K_v\|_{\frac{N}{2s_0}}^{\frac{N}{2s_0}}\\
				&=\int_{\mathbb{R}^N}\Big\{|x|^{-\gamma}*[(|v_s|+|v|)(|v_s|-|v|)]\Big\}^{\frac{N}{2s_0}}
				\,dx\\
				&\leq \int_{\mathbb{R}^N}\Big\{|x|^{-\gamma}*[2(|v_s|^2+|v|^2)]\Big\}^{\frac{N}{4s_0}}
				\Big\{|x|^{-\gamma}*(|v_s-v|^2)\Big\}^{\frac{N}{4s_0}}\,dx\\
				&\leq\Bigg\{\int_{\mathbb{R}^N}\Big\{|x|^{-\gamma}*[2(|v_s|^2+|v|^2)]\Big\}^{\frac{N}{2s_0}}
				\,dx\Bigg\}^{\frac12}\Bigg\{\int_{\mathbb{R}^N}\Big\{|x|^{-\gamma}*|v_s-v|^2\Big\}^{\frac{N}{2s_0}}
				\,dx\Bigg\}^{\frac12}\\
				&\leq C\Big[\|v_s\|_{2_{s_0}^*}^{\frac{N}{s_0}}+\|v\|_{2_{s_0}^*}^{\frac{N}{s_0}}\Big]^{\frac12}
				\|v_s-v\|_{2_{s_0}^*}^{\frac{N}{2s_0}}.
			\end{split}
		\end{equation*}
		The assertion follows since Lemma \ref{lem4b} implies $v_s\to v$ in $L^{2_{s_0}^*}(\mathbb{R}^N)$ as $s\downarrow s_0$.
	\end{proof}
	
	Recall that  $\tilde{u}_s(x)=u_s(x+x_s)$, where $x_{s}$ is a maximum point of $u_{s}$.
	\begin{Lemma}\label{lem6}Let $R>0$. Then
		
		\smallskip
		$(i)$ $\lim_{s\downarrow s_0} \int_{\{x:|x|\geq R\}}|\tilde{u}_s|^{2_{s_0}^*}\,dx=0$;
		
		\smallskip
		$(ii)$ $\lim_{s\downarrow s_0} \int_{\{x:|x|\geq R\}}\Big||x|^{-\gamma}*|\tilde{u}_s|^2\Big|^{\frac{N}{2s_0}}\,dx=0$.
	\end{Lemma}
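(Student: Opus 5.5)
The plan is to reduce both statements to the rescaled functions $v_s$ and then use the strong convergences already proved: Lemma \ref{lem4b} ($v_s\to v$ in $L^{2_{s_0}^*}(\mathbb{R}^N)$ and $\mu_s^{s-s_0}\to 1$) and Lemma \ref{lem6a} ($K_{v_s}\to K_v$ in $L^{\frac{N}{2s_0}}(\mathbb{R}^N)$). By definition of $v_s$,
\[
\tilde u_s(x)=\mu_s^{-\alpha_s}v_s(x/\mu_s),\qquad \alpha_s=\tfrac{N+2s-\gamma}{2}.
\]
So every integral over $\{|x|\ge R\}$ of a quantity built from $\tilde u_s$ becomes a power of $\mu_s$ times an integral of the corresponding quantity for $v_s$ over $\{|y|\ge R/\mu_s\}$. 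Since $\mu_s\to 0$, this exterior region escapes to infinity.

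For $(i)$, recall $2_{s_0}^*=\frac{2N}{N-2s_0}=\frac{4N}{2N-\gamma}$. The change of variables $x=\mu_s y$ gives
\[
\int_{\{|x|\ge R\}}|\tilde u_s|^{2_{s_0}^*}\,dx=\mu_s^{N-\alpha_s 2_{s_0}^*}\int_{\{|y|\ge R/\mu_s\}}|v_s|^{2_{s_0}^*}\,dy .
\]
A direct computation gives $\alpha_s2_{s_0}^*-N=\frac{4N(s-s_0)}{2N-\gamma}$, so the prefactor equals $\big(\mu_s^{s-s_0}\big)^{-\frac{4N}{2N-\gamma}}$, which tends to $1$ by Lemma \ref{lem4b}. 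For the integral I will use
\[
\int_{\{|y|\ge R/\mu_s\}}|v_s|^{2_{s_0}^*}\le C\|v_s-v\|_{2_{s_0}^*}^{2_{s_0}^*}+C\int_{\{|y|\ge R/\mu_s\}}|v|^{2_{s_0}^*}.
\]
The first term tends to $0$ by Lemma \ref{lem4b}. The second tends to $0$ because $v\in L^{2_{s_0}^*}(\mathbb{R}^N)$ and $R/\mu_s\to\infty$.

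For $(ii)$ the same scaling applies to the convolution. Substituting $y=\mu_s z$ gives
\[
K_{\tilde u_s}(x)=\mu_s^{-2\alpha_s+N-\gamma}K_{v_s}(x/\mu_s)=\mu_s^{-2s}K_{v_s}(x/\mu_s).
\]
Hence
\[
\int_{\{|x|\ge R\}}K_{\tilde u_s}^{\frac{N}{2s_0}}\,dx=\mu_s^{N-\frac{sN}{s_0}}\int_{\{|y|\ge R/\mu_s\}}K_{v_s}^{\frac{N}{2s_0}}\,dy .
\]
The prefactor is $\big(\mu_s^{s-s_0}\big)^{-N/s_0}\to1$. The remaining integral is split exactly as in $(i)$. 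The difference part is bounded by $C\|K_{v_s}-K_v\|_{\frac{N}{2s_0}}^{\frac N{2s_0}}\to0$ by Lemma \ref{lem6a}. The tail part is $\int_{\{|y|\ge R/\mu_s\}}K_v^{\frac{N}{2s_0}}$, which tends to $0$ because $K_v\in L^{\frac{N}{2s_0}}=L^{\frac{2N}{\gamma}}$ by Lemma \ref{lemb2}.

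I do not expect a genuine obstacle. The only delicate point is the exponent bookkeeping: the powers of $\mu_s$ are not exactly zero but are multiples of $s-s_0$. They are controlled only through the nontrivial fact $\mu_s^{s-s_0}\to1$ from Lemma \ref{lem4b}, and this is what prevents the $\mu_s$-prefactors from blowing up.
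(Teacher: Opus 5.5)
Your proposal is correct and is essentially the paper's proof: rescale via $\tilde u_s(x)=\mu_s^{-\alpha_s}v_s(x/\mu_s)$, control the resulting prefactors $\mu_s^{-\frac{2N}{N-2s_0}(s-s_0)}$ and $\mu_s^{-\frac{N}{s_0}(s-s_0)}$ through $\mu_s^{s-s_0}\to 1$ (Lemma \ref{lem4b}), and split the integral over $\{|y|\ge R/\mu_s\}$ into $\|v_s-v\|_{2_{s_0}^*}$ (resp.\ $\|K_{v_s}-K_v\|_{\frac{N}{2s_0}}$, Lemma \ref{lem6a}) plus a tail of the fixed limit $v$ (resp.\ $K_v$). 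Your exponent bookkeeping, including $K_{\tilde u_s}(x)=\mu_s^{-2s}K_{v_s}(x/\mu_s)$, agrees with the paper's.
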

	\begin{proof}
		$(i)$ Since $\mu_s\to 0$, $\lim_{s\downarrow s_0}\mu_s^{s-s_0}=1$, and by Lemma \ref{lem4b}, $v_s\to v$ in $L^{2_{s_0}^*}(\mathbb{R}^N)$ as $s\downarrow s_0$, the conclusion follows from
		\begin{equation*}
			\begin{split}
				\int_{\{x:|x|\geq R\}}|\tilde{u}_s|^{2_{s_0}^*}\,dx
				&=	\mu_s^{-\frac{2N}{N-2s_0}(s-s_0)}\int_{\{x:|x|\geq R\mu_s^{-1}\}}|\tilde{v}_s|^{2_{s_0}^*}\,dx\\
				&\leq C\Big[
				\int_{\{x:|x|\geq R\mu_s^{-1}\}}|\tilde{v}_s-v|^{2_{s_0}^*}\,dx
				+\int_{\{x:|x|\geq R\mu_s^{-1}\}}|v|^{2_{s_0}^*}\,dx\Big].
			\end{split}
		\end{equation*}
	\bigskip	
		
		$(ii)$ 			
		By Lemma \ref{lem6a} and  $\mu_s\to 0$ as $s\downarrow s_0$, we obtain
		\begin{equation*}
			\begin{split}
				\int_{\{x:|x|\geq R\}}\Big||x|^{-\gamma}*|\tilde{u}_s|^2\Big|^{\frac{N}{2s_0}}\,dx
				&=\mu_s^{-\frac N{s_0}(s-s_0)}\int_{\{x:|x|\geq R\mu_s^{-1}\}}|K_{v_s}|^{\frac{N}{2s_0}}\,dx\\
				&\leq C\Big[\|K_{v_s}-K_v\|_{\frac{N}{2s_0}}^{\frac{N}{2s_0}}+
				\int_{\{x:|x|\geq R\mu_s^{-1}\}}|K_v|^{\frac{N}{2s_0}}\,dx\Big]
				\to 0
			\end{split}
		\end{equation*}
as $s\downarrow s_0.$
	\end{proof}

\bigskip

	\begin{Lemma}\label{lem7}
		For  $R>0$, there holds
		\[
		\lim_{s\downarrow s_0}\max_{x\in B_R(0)^c}\tilde{u}_s(x)=0.
		\]
	\end{Lemma}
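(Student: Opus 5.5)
We argue via a local $L^\infty$ bound for $\tilde u_s$ on unit balls far from the origin, with smallness supplied by Lemma \ref{lem6}. Fix $R>0$ and let $y_0\in\mathbb{R}^N$ with $|y_0|\ge R$. The function $\tilde u_s$ satisfies
\[
(-\Delta)^s\tilde u_s\le a_s(x)\tilde u_s,\qquad a_s:=|x|^{-\gamma}*|\tilde u_s|^2=K_{\tilde u_s}.
\]
The aim is to show
\[
\sup_{B_{r}(y_0)}\tilde u_s\le C\Big(\int_{B_{2r}(y_0)}|\tilde u_s|^{2_{s_0}^*}\Big)^{\theta}
\]
for some $\theta>0$, with $C$ independent of $s$ and $y_0$. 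Combined with Lemma \ref{lem6}$(i)$, applied with $R/2$ in place of $R$ (so that the right-hand side tends to $0$ uniformly in $|y_0|\ge R$ once $r\le R/4$), this gives the claim.

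\textbf{Step 1 (local $L^q$ bound).} By Lemma \ref{lem6}$(ii)$, $\int_{|x|\ge R/2}|a_s|^{N/(2s_0)}\to0$ as $s\downarrow s_0$. Since $N/(2s)<N/(2s_0)$, Hölder on $B_{2r}(y_0)$ gives $\int_{B_{2r}(y_0)}|a_s|^{N/(2s)}\le\delta$ for $s$ close to $s_0$, with the bound uniform in $y_0$. Lemma \ref{leme2} then yields $\|\tilde u_s\|_{L^{(2_s^*)^2/2}(B_r(y_0))}\le C\|\tilde u_s\|_{2_s^*}\le C$, using Lemma \ref{lem1}$(iii)$. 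However, this global right-hand side does not give smallness. I would therefore localize: repeat the argument for $\eta\tilde u_s$, with $\eta$ a cutoff supported in $B_{2r}(y_0)$, absorbing the commutator $(-\Delta)^s(\eta\tilde u_s)-\eta(-\Delta)^s\tilde u_s$ as a bounded error term. Alternatively, I would simply redo the Moser/Brezis--Kato iteration of Lemma \ref{leme2} with cutoffs, obtaining a higher integrability $L^q$, $q>2_{s_0}^*$, bounded by the local $L^{2_{s_0}^*}$ norm plus a tail.

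\textbf{Step 2 (integrability of $a_s$ above $N/(2s)$).} To apply a De Giorgi/Moser $L^\infty$ bound uniform in $s$ (the local version underlying Lemma \ref{lem:A.3}, i.e.\ Lemma \ref{leme} with $b=0$), I need $a_s\in L^t(B_{r}(y_0))$ with $t>N/(2s_0)$ and a bound uniform in $s$ and $y_0$. Split $a_s$ into the near part, $|x-y|\le 1$, and the far part. The far part is bounded by Hölder, as in Lemma \ref{lem4a}$(iv)$. The near part lies in $L^t$ via Lemma \ref{lemb2}, using the $L^q$ bound on $\tilde u_s$ from Step 1 on a slightly bigger ball, together with the global $L^{2_{s_0}^*}$ bound. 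With this, Lemma \ref{leme} (Harnack with $b=0$, giving $\sup\le C\inf$) or the local Moser estimate gives
\[
\sup_{B_{r/2}(y_0)}\tilde u_s\le C\,\|\tilde u_s\|_{L^{2_{s_0}^*}(B_r(y_0))}\cdot(\text{bounded factor}),
\]
plus a nonlocal tail term. The Harnack route is convenient: $\sup_{B_{r/2}}\tilde u_s\le C\inf_{B_{r/2}}\tilde u_s\le C|B_{r/2}|^{-1/2_{s_0}^*}\|\tilde u_s\|_{L^{2_{s_0}^*}(B_{r/2}(y_0))}$. This avoids tail terms entirely, since Lemma \ref{leme} is stated for nonnegative $H^s(\mathbb{R}^N)$ solutions of $(-\Delta)^s\tilde u_s=a\tilde u_s$ in the ball, where $a=a_s-V(\cdot+x_s)$ belongs to $L^q(B_r)$ with uniform bounds by Step 2.

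\textbf{Step 3 (conclusion).} Taking $r=\min\{1/2,R/4\}$, every $x$ with $|x|\ge R$ is the center of such a ball contained in $\{|y|\ge R/2\}$. Lemma \ref{lem6}$(i)$ then gives $\max_{|x|\ge R}\tilde u_s\to0$; the maximum exists by the continuity and decay of $\tilde u_s$ from Theorem \ref{thm1}$(ii)$. The main obstacle is Step 2: producing an $L^q$ bound on $a_s$ with $q>N/(2s_0)$ that is uniform in both $s$ and $y_0$, given that the global bounds only control $a_s$ in $L^{2N/\gamma}=L^{N/(2s_0)}$, which is exactly the borderline exponent. I expect to need the improved local integrability of $\tilde u_s$ from Step 1, fed back through Lemma \ref{lemb2}, to gain the extra room above $N/(2s_0)$.
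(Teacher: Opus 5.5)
Your proposal is correct and is essentially the paper's proof. Lemma~\ref{lem6}$(ii)$ with H\"older and Lemma~\ref{leme2} gives a uniform local $L^p$ bound on $\tilde u_s$ with $p>2_{s_0}^*$. Lemma~\ref{lemb2} applied to the near part then puts $|x|^{-\gamma}*|\tilde u_s|^2$ in $L^q$ with $q>\frac{N}{2s_0}$ on small balls away from the origin. The Harnack inequality (Lemma~\ref{leme} with $b=0$) gives $\tilde u_s(x)\le C\|\tilde u_s\|_{L^{2_{s_0}^*}(B_{r/4}(x))}$, which tends to zero by Lemma~\ref{lem6}$(i)$.

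Two small adjustments. The cutoff/commutator localization in your Step 1 is unnecessary, since the Harnack route needs only boundedness, not smallness, of the local $L^p$ norm. Also, split $a_s$ into near and far parts at a scale $\lesssim r$ (the paper uses $|y-z|\le r/2$) rather than $|x-y|\le 1$. That keeps the region where you invoke the Step~1 bound inside $\{|y|\ge R/2\}$, away from the origin, where the $L^p$ norm of $\tilde u_s$ blows up.
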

	\begin{proof}
		Choose $r>0$ sufficiently small such that $|B_r(0)|<1$ and $r<R$. By \eqref{uste} and $V\geq 0$, we have
		\begin{equation}\label{ust2}
		(-\Delta)^{s}\tilde{u}_s=[|x|^{-\gamma}*|\tilde{u}_s|^2-V(x+x_s)]\tilde{u}_s\leq (|x|^{-\gamma}*|\tilde{u}_s|^2)\tilde{u}_s.
		\end{equation}

		For $|x|\geq R$. we deduce from Lemma \ref{lem6} that
		\[\Big\||x|^{-\gamma}*|\tilde{u}_s|^2\Big\|_{L^{\frac{N}{2s}}(B_r(x))}
		\leq \Big\||x|^{-\gamma}*|\tilde{u}_s|^2\Big\|_{L^{\frac{N}{2s_0}}(B_r(x))}
		\leq \Big\||x|^{-\gamma}*|\tilde{u}_s|^2\Big\|_{L^{\frac{N}{2s_0}}(B_{R-r}(0)^c)}\to 0
		\]
as $s\downarrow s_0.$
		Using Lemmas \ref{leme2} and Lemma \ref{lem1} $(iii)$, for $s>s_0$ close to $s_0$, we have
		\[
		\|\tilde{u}_s\|_{L^{\frac{(2_s^*)^2}{2}}(B_{r/2}(x))}\leq C\|\tilde{u}_s\|_{2_s^*}=C\|{u}_s\|_{2_s^*}\leq C.
		\]
		Since
		\[
		\lim_{s\downarrow s_0}\frac{(2_s^*)^2}{2}=\frac{2N^2}{(N-2s_0)^2}>2_{s_0}^*,
		\]
for $s>s_0$ close to $s_0$, we can choose
		$p\in (2_{s_0}^*, \frac{(2_{s_0}^*)^2}{2})$ such that $p<\frac{(2_s^*)^2}{2}$ and
		\[
		\|\tilde{u}_s\|_{L^{p}(B_{r/2}(x))}\leq \|\tilde{u}_s\|_{L^{\frac{(2_s^*)^2}{2}}(B_{r/2}(x))}
		\leq C.
		\]
		Therefore, by Lemma \ref{lemb2}, we obtain
		\begin{equation}\label{lemb2a}
			\int_{B_r(x)}\Big(\int_{B_r(x)}\frac{|\tilde{u}_s(z)|^2}{|y-z|^\gamma}\,dz\Big)^qdy
			\leq C\|\tilde{u}_s\|_{L^p(B_r(x))}^{2q}\leq C,
		\end{equation}
		where $1+\frac 1q=\frac2p+\frac\gamma N$.
		Hence, we get via Lemma \ref{lem1} that
		\begin{equation*}
			\begin{split}
				&\int_{B_{r/2}(x)}(|y|^{-\gamma}*|\tilde{u}_s|^2)^q\,dy\\
				&=	\int_{B_{r/2}(x)}\Big(\int_{\{z:|y-z|\leq \frac r2\}}\frac{|\tilde{u}_s(z)|^2}{|y-z|^\gamma}\,dz
				+\int_{\{z:|y-z|\geq \frac r2\}}\frac{|\tilde{u}_s(z)|^2}{|y-z|^\gamma}\,dz
				\Big)^q\,dy\\
				&\leq 2^q \Big\{	\int_{B_{r/2}(x)}\Big[\int_{B_{r}(x)}\frac{|\tilde{u}_s(z)|^2}{|y-z|^\gamma}\,dz	\Big]^q\,dy
				+\int_{B_{r/2}(x)}\Big[(\gamma/2)^{-\gamma}\int_{\mathbb{R}^N}|\tilde{u}_s(z)|^2\,dz\Big]^q\,dz\Big\}\\
				&\leq 2^q \Bigg\{	\int_{B_{r}(x)}\Big[\int_{B_{r}(x)}\frac{|\tilde{u}_s(z)|^2}{|y-z|^\gamma}\,dz	\Big]^q\,dy
				+C\Bigg\}	\leq C.
			\end{split}
		\end{equation*}

Note that $q>\frac{N}{2s_0}$.    Applying  Lemma \ref{leme} to \eqref{ust2} with $s_1=s_0$, $s_2>s_0$ close to $s_0$, $a(y)=|y|^{-\gamma}*|\tilde{u}_s|^2-V(y+x_s)$ and $b=0$,   we have for $x\in B_R(0)^c$ that
		\[
		\tilde{u}_s(x)\leq \sup_{B_{r/4}(x)}\tilde{u}_s(y)\leq C\inf_{B_{r/4}(x)}\tilde{u}_s(y)
		\leq C\|\tilde{u}_s\|_{L^{2_{s_0}^*}(B_{r/4}(x))}\leq C\|\tilde{u}_s\|_{L^{2_{s_0}^*}(B_{R-r/4}(0)^c)},
		\]
		which yields
		\[
		\max_{B_R(0)^c}\tilde{u}_s(x)\leq C\|\tilde{u}_s\|_{L^{2_{s_0}^*}(B_{R-r/4}(0)^c)}.
		\]
		This and  Lemma \ref{lem6} implies the desired result.
	\end{proof}

\bigskip

	\begin{Lemma}\label{lem8} Let $R>0$. For $s>s_0$ close to $s_0$, we have
		
		\smallskip	
		$(i)$	$\tilde{u}_s(x)\leq C|x|^{-N}, \quad {\rm if}\quad|x|\geq R$;
		
		\smallskip	
		$(ii)$  $|x|^{-\gamma}*|\tilde{u}_s|^2\leq C|x|^{-\gamma}, \quad {\rm if}\quad|x|\geq R$;
		
		\smallskip	
		$(iii)$  $|x|^{-\gamma}*|{v}_s|^2\leq C\mu_s^{2s-\gamma}|x|^{-\gamma}, \quad {\rm if}\quad |x|\geq \mu_s^{-1}R$.
	\end{Lemma}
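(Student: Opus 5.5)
The plan is to prove (i) by a comparison (fundamental–solution) argument as in the proof of Theorem \ref{thm1}$(ii)$, with every constant kept uniform in $s$. Then (ii) follows from (i) by splitting the convolution, and (iii) follows from (ii) by scaling.

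\emph{Step 1: the convolution is uniformly small far out.} I will show that for every $\eta>0$ there is $R_0$, independent of $s$ near $s_0$, such that $|x|^{-\gamma}*|\tilde u_s|^2\le \eta$ for $|x|\ge R_0$. Split the $y$-integral into three regions.
\begin{itemize}
\item On $\{|y|\le |x|/2\}$ it is at most $(|x|/2)^{-\gamma}\|u_s\|_2^2\le C|x|^{-\gamma}$, by Lemma \ref{lem1}.
\item On $\{|y|\ge |x|/2,\ |x-y|\le\delta\}$ it is at most $\big(\max_{B_{R_0/2}^c}\tilde u_s\big)^2\int_{|z|\le\delta}|z|^{-\gamma}dz$, which tends to $0$ by Lemma \ref{lem7}.
\item On $\{|y|\ge |x|/2,\ |x-y|\ge\delta\}$, use H\"older with $|\tilde u_s|^2\in L^{2_{s_0}^*/2}$ and $|z|^{-\gamma}\chi_{\{|z|\ge\delta\}}\in L^{2N/\gamma}$; the latter is integrable since $\gamma\cdot\frac{2N}{\gamma}=2N>N$. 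This piece is bounded by $C\delta^{-\gamma/2}\|\tilde u_s\|_{L^{2_{s_0}^*}(B_{R_0/2}^c)}^2$, which tends to $0$ by Lemma \ref{lem6}$(i)$.
\end{itemize}
Taking $\eta=\frac12V_0$ gives $(-\Delta)^s\tilde u_s+\frac12V_0\tilde u_s=f_s$ with $f_s\le0$ on $B_{R_0}^c$.

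\emph{Step 2: comparison, proving (i).} As in Theorem \ref{thm1}, for $|x|\ge 2R_0$,
\[
\tilde u_s(x)\le\int_{B_{R_0}}K_{s,\frac12V_0}(x-y)f_s(y)\,dy\le \sup_{|z|\ge |x|/2}K_{s,\frac12V_0}(z)\int_{B_{R_0}}(|y|^{-\gamma}*|\tilde u_s|^2)\tilde u_s\,dy .
\]
The integral must be bounded uniformly even though $\|\tilde u_s\|_\infty\to\infty$. I use H\"older on the bounded set $B_{R_0}$: it is at most $\big\||\cdot|^{-\gamma}*|\tilde u_s|^2\big\|_{2N/\gamma}\,\|\tilde u_s\|_{L^{2N/(2N-\gamma)}(B_{R_0})}\le C$. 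This uses Lemma \ref{lem1}$(iii),(iv)$ and $\frac{2N}{2N-\gamma}<2_{s_0}^*$.

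For the kernel I need a bound $K_{s,\lambda}(z)\le C|z|^{-(N+2s)}\le C|z|^{-N}$ for $|z|\ge R_0$, with $C$ uniform for $s$ in a compact subinterval of $(0,1)$. I would get it from the subordination formula $K_{s,\lambda}(z)=\int_0^\infty e^{-\lambda t}p_s(t,z)\,dt$, where $p_s$ is the fractional heat kernel, together with the bound $p_s(t,z)\le C\,t\,|z|^{-N-2s}$ uniform in $s$. This gives (i) for $|x|\ge 2R_0$. On the annulus $R\le|x|\le 2R_0$, $\tilde u_s$ is uniformly bounded by Lemma \ref{lem7}, so $\tilde u_s\le C|x|^{-N}$ there too after enlarging $C$. \textbf{This uniform kernel estimate is the main obstacle}; Lemma C.1 of \cite{FLS} only gives $|z|^{-(N-2s)}$ with an $s$-dependent constant, and that decay is too weak here.

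\emph{Step 3: (ii) and (iii).} For (ii), split again.
\begin{itemize}
\item On $\{|y|\le|x|/2\}$ the contribution is at most $C|x|^{-\gamma}\|u_s\|_2^2$.
\item On $\{|y|\ge|x|/2,\ |x-y|\ge|x|\}$ it is at most $|x|^{-\gamma}\|u_s\|_2^2$.
\item On $\{|y|\ge|x|/2,\ |x-y|\le|x|\}$, (i) gives at most $C|x|^{-2N}\int_{|z|\le|x|}|z|^{-\gamma}dz=C|x|^{-N-\gamma}\le C|x|^{-\gamma}$.
\end{itemize}
For (iii), substitution in the definition of $v_s$ gives
\[
|x|^{-\gamma}*|v_s|^2=\mu_s^{2\alpha_s-N+\gamma}\big(|\cdot|^{-\gamma}*|\tilde u_s|^2\big)(\mu_s x)=\mu_s^{2s}\big(|\cdot|^{-\gamma}*|\tilde u_s|^2\big)(\mu_sx).
\]
When $|x|\ge\mu_s^{-1}R$ we have $|\mu_s x|\ge R$, so (ii) bounds this by $C\mu_s^{2s}|\mu_sx|^{-\gamma}=C\mu_s^{2s-\gamma}|x|^{-\gamma}$.
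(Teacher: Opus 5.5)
Your proposal is correct and follows essentially the paper's own argument. Lemma \ref{lem7} makes $|x|^{-\gamma}*|\tilde u_s|^2\le\frac12V_0$ outside a fixed ball. Representing $\tilde u_s$ through the kernel $K_{s,\frac12V_0}$ of $(-\Delta)^s+\frac12V_0$, with a right-hand side that is nonpositive there, gives (i). Splitting the convolution and using (i) gives (ii), and scaling gives (iii). Your H\"older bound for $\int_{B_{R_0}}(|y|^{-\gamma}*|\tilde u_s|^2)\tilde u_s\,dy$ is only a cleaner variant of the paper's estimate.

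On the step you flag, the paper does not prove uniformity in $s$ either. It cites item (ii) of Lemma C.1 in \cite{FLS}, not the $|x|^{-(N-2s)}$ item used in Theorem \ref{thm1}, as giving $K_{s,\frac12V_0}\le C(N,s_0)|x|^{-N}$ for $s_0<s<1$. Your subordination argument with a uniform bound $p_s(t,z)\le Ct|z|^{-N-2s}$ is precisely what is needed to justify that citation.
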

	\begin{proof}$(i)$
		By Lemma \ref{lem7}, it suffices to prove that there exists $R$ large enough such that the result holds. Let $|x|\geq R$. Then by Lemma \ref{lem1},
		\begin{equation*}
			\begin{split}
				|x|^{-\gamma}*|\tilde{u}_s|^2&=
				\int_{\{y:|x-y|\leq R/2\}}\frac{|\tilde{u}_s(y)|^2}{|x-y|^\gamma}\,dy
				+\int_{\{y:|x-y|\geq R/2\}}\frac{|\tilde{u}_s(y)|^2}{|x-y|^\gamma}\,dy\\
				&\leq \max_{\{y:|x-y|\leq R/2\}}|\tilde{u}_s(y)|^2 \int_{\{y:|x-y|\leq R/2\}}|x-y|^{-\gamma}\,dy
				+\big(\frac R2\big)^{-\gamma}\int_{\mathbb{R}^N}|\tilde{u}_s(y)|^2\,dy\\
				&\leq C(N)\max_{\{y:|y|\geq R/2\}}|\tilde{u}_s(y)|^2 R^{N-\gamma}+C_1R^{-\gamma},
			\end{split}
		\end{equation*}
		where  $C_1>0$ is independent of $s$, $x$ and $R$.
		
		Choose $R>0$ large enough such that
		\[
		C_1R^{-\gamma}<\frac14V_0.
		\]
		By Lemma \ref{lem7}, for $s>s_0$ close to $s_0$, we have
		\[
		C(N)\max_{\{y:|y|\geq R/2\}}|\tilde{u}_s(y)|^2 R^{N-\gamma}<\frac14V_0.
		\]
		Rewrite \eqref{uste} as
		\[
		(-\Delta)^s\tilde{u}_s+\frac12V_0\tilde{u}_s=(|x|^{-\gamma}*|\tilde{u}_s|^2-V(x+x_s)+\frac12V_0)\tilde{u}_s:=h_s(x).
		\]
		Then, for $|x|\geq R$, $h_s(x)\leq (|x|^{-\gamma}*|\tilde{u}_s|^2-\frac12V_0)\tilde{u}_s\leq 0$, we have
		\[
		\tilde{u}_s=\int_{\mathbb{R}^N}K_{s,\frac12V_0}(x-y)h_s(y)\,dy\leq \int_{\{y:|y|\leq R\}}K_{s,\frac12V_0}(x-y)h_s(y)\,dy,
		\]
		where $K_{s,\frac12V_0}>0$ is the fundamental solution of
		\[
		(-\Delta)^s\tilde{u}_s+\frac12V_0\tilde{u}_s=0.
		\]
		It is known from Lemma C.1 $(ii)$ in \cite{FLS} that
		\[K_{s,\frac12V_0}\leq C(N,s_0)|x|^{-N}, \quad \text{if}\quad s_0<s<1.
		\]
		Hence, if $|x|\geq 2R$, we derive from Lemma \ref{lem1} that
		\begin{equation*}
			\begin{split}
				\tilde{u}_s(x)&\leq C|x|^{-N}\int_{|y|\leq R}h_s(y)\,dy\\
				&\leq  C|x|^{-N}\int_{\{y:|y|\leq R\}}(|x|^{-\gamma}*|\tilde{u}_s|^2)\tilde{u}_s\,dy\\
				&\leq C|x|^{-N}\int_{\{y:|y|\leq R\}}(|x|^{-\gamma}*|\tilde{u}_s|^2)(1+\tilde{u}_s^2)\,dy\\
				&\leq C|x|^{-N}\int_{\{y:|y|\leq R\}}\Big[1+(|x|^{-\gamma}*|\tilde{u}_s|^2)^{\frac N{2s_0}}+(|x|^{-\gamma}*|\tilde{u}_s|^2)\tilde{u}_s^2\Big]\,dy\\
				&\leq C|x|^{-N}.
			\end{split}
		\end{equation*}
		This yields the result of $(i)$.

\bigskip
		
		$(ii)$ Let $|x|\geq R$. By Lemma \ref{lem1} and $(i)$, we obtain that
		\begin{equation*}
			\begin{split}
				|x|^{-\gamma}*|\tilde{u}_s|^2
				&=\Big(\int_{\{y:|y|\leq |x|/2\}}+\int_{\{y:|x|/2\leq |y|\leq 2|x|\}}+\int_{\{y:|y|\geq 2|x|\}}\Big)\frac{|\tilde{u}_s(y)|^2}{|x-y|^\gamma}\,dy\\
				&\leq \Big(\frac{|x|}{2}\Big)^{-\gamma}\int_{\{y:|y|\leq |x|/2}\}\}|\tilde{u}_s(y)|^2\,dy
				+C|x|^{-2N}\int_{\{y:|x-y|\leq 3|x|\}}|x-y|^{-\gamma}\,dy\\
				&+|x|^{-\gamma}\int_{\{y:|y|\geq 2|x|\}}|\tilde{u}_s(y)|^2\,dy\\
				&\leq \Big(\frac{|x|}{2}\Big)^{-\gamma}\int_{\mathbb{R}^N}|\tilde{u}_s(y)|^2\,dy
				+C(N)|x|^{-N-\gamma}+
				C|x|^{-\gamma}\int_{\{y:|y|\geq 2R\}}|y|^{-2N}\,dy\\
				&\leq C\Big(1+R^{-N}+\int_{\{y:|y|\geq 2R\}}|y|^{-2N}\,dy\Big)|x|^{-\gamma}\leq
				C|x|^{-\gamma}.
			\end{split}
		\end{equation*}

\bigskip
		$(iii)$ By $(ii)$, if $|x|\geq \mu_s^{-1}R$, we have
		\[
		|x|^{-\gamma}*|v_s|^2=\mu_s^{2s}(|\cdot|^{-\gamma}*|\tilde{u}_s|^2)(\mu_sx)
		\leq C\mu_s^{2s-\gamma}|x|^{-\gamma}.
		\]

The proof is complete.
	\end{proof}

\bigskip
	
	Now, we establish the uniformly decaying law of $v_s$ with respect to $s$ at  infinity.
	\begin{Lemma}\label{lem9} For $s>s_0$ close to $s_0$, we have
		\[
		v_s(x)\leq C(1+|x|^2)^{-\frac{N-2s}{2}},\quad x\in \mathbb{R}^N.
		\]	
	\end{Lemma}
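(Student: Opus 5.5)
Since $0\le v_s\le 1$, the bound is trivial on any fixed ball $|x|\le R_0$. The whole task is therefore a uniform decay estimate for large $|x|$, and I will get it by comparison with the fundamental solution of $(-\Delta)^s$ on the blown-up scale.

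\textbf{Step 1: a subsolution inequality for $v_s$.} Drop the nonnegative potential term in \eqref{vse}:
\[
(-\Delta)^s v_s\le K_{v_s}\,v_s \quad\text{in } \mathbb{R}^N .
\]
By Lemma \ref{lem8}(iii), $K_{v_s}(x)\le C\mu_s^{2s-\gamma}|x|^{-\gamma}$ for $|x|\ge \mu_s^{-1}R$. Using $\mu_s^{s-s_0}\to1$ from Lemma \ref{lem4b} and $2s-\gamma=2(s-2s_0)$, the prefactor behaves like $\mu_s^{-2s_0}$, which is large. So the pointwise far-field bound alone is weak. The useful information is instead integral: by Lemma \ref{lem6a}, $K_{v_s}\to K_v$ in $L^{N/(2s_0)}$, and $K_v\in L^{N/(2s_0)}$. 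Hence for any $\eta>0$ there is $R_0$ with $\|K_{v_s}\|_{L^{N/(2s_0)}(B_{R_0}^c)}<\eta$ uniformly for $s$ near $s_0$.

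\textbf{Step 2: the representation formula.} Write
\[
v_s=\int_{\mathbb{R}^N}\Gamma_s(x-y)\,f_s(y)\,dy,\qquad f_s=\big(K_{v_s}-\mu_s^{2s}V(\mu_s\cdot+x_s)\big)v_s\le K_{v_s}v_s,
\]
where $\Gamma_s=c_{N,s}|x|^{-(N-2s)}$ and $c_{N,s}$ is bounded uniformly for $s$ near $s_0$. This is justified because $v_s\in\dot H^s$ decays. It gives
\[
v_s(x)\le C\int \frac{K_{v_s}(y)v_s(y)}{|x-y|^{N-2s}}\,dy .
\]
Split the integral into three regions. On $|y|\le |x|/2$, use $\|K_{v_s}v_s\|_{L^1}$, bounded via Lemma \ref{lem4a} and the uniform $L^2$-type controls, to get $C|x|^{-(N-2s)}$. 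On $|y|\ge 2|x|$ the kernel is no bigger than on the near region, and this part is absorbed similarly. The dangerous region is $|x|/2\le|y|\le2|x|$, where we need smallness of $K_{v_s}$ in $L^{N/(2s)}$.

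\textbf{Step 3: closing the estimate.} I would set up a bootstrap on the weighted quantity
\[
M_s=\sup_x\,(1+|x|)^{N-2s}v_s(x).
\]
This is finite for each fixed $s$ by Theorem \ref{thm1}(ii). In the middle region, $v_s(y)\le M_s|y|^{-(N-2s)}$, and Hölder with $K_{v_s}\in L^{N/(2s)}(B_{R_0}^c)$ small gives a contribution $\le C\eta M_s|x|^{-(N-2s)}$. The kernel $|x-y|^{-(N-2s)}$ lies in $L^{N/(N-2s)}_{\mathrm{weak}}$ locally; this borderline Hölder step needs care, and I would handle it by splitting $|x-y|<1$, where one uses $v_s\le1$ together with $K_{v_s}\in L^q$ locally as in the proof of Lemma \ref{lem7}. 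Choosing $\eta$ small gives $M_s\le C+\tfrac12M_s$, hence $M_s\le C$ uniformly.

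\textbf{Main obstacle.} I expect the main obstacle to be the uniform $L^1$ bound on $K_{v_s}v_s$. The rescaling produces factors $\mu_s^{-(s-s_0)\cdot(\text{power})}$, which must be controlled through Lemma \ref{lem4b}. If this fails, the fallback is to run the same argument for $\tilde u_s$ using Lemma \ref{lem8}(i), then rescale and track the powers of $\mu_s$.
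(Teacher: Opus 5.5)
Your Riesz-potential bootstrap on $M_s$ does not close, and it breaks exactly where you anticipate trouble. For the regions $|y|\le |x|/2$ and $|y|\ge 2|x|$ you need a uniform bound on $\|K_{v_s}v_s\|_{L^1}$, and nothing proved before this lemma supplies it. H\"older against $K_{v_s}\in L^{N/(2s_0)}$ would require $v_s$ to be bounded in $L^{N/(N-2s_0)}$, but even the limit $v\sim |x|^{-(N-2s_0)}$ is not in that space. Finiteness of $\int K_v v$ instead comes from the pointwise decay $K_v\lesssim |x|^{-\gamma}$. For $v_s$ this would need a uniform $L^2$ bound, and $\|v_s\|_2^2=\mu_s^{2s-\gamma}\|u_s\|_2^2$ carries the divergent factor $\mu_s^{2s-\gamma}$. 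The paper obtains $\int v_s^2\to\int v^2$ only later, in the proof of Theorem \ref{thm2}(ii), by dominated convergence built on Lemma \ref{lem9} itself. So the ``uniform $L^2$-type controls'' you invoke are circular.

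Two other routes also fail here. Feeding $v_s\le M_s(1+|y|)^{-(N-2s)}$ into $K_{v_s}$ gives a superlinear inequality such as $M_s\le C+\varepsilon M_s^3$. This cannot be closed, because $u_s$ is an arbitrary ground state and there is no continuity in $s$. Pairing only the smallness of $\|K_{v_s}\|_{L^{N/(2s_0)}(B_{R_0}^c)}$ with $M_s|y|^{-(N-2s)}$ fails because $|y|^{-(N-2s)}\notin L^{N/(N-2s_0)}(B_{R_0}^c)$ for $s\ge s_0$. Your fallback via Lemma \ref{lem8}(i) does not help either. Rescaled, it gives $v_s\le C\mu_s^{\alpha_s-N}|x|^{-N}$ for $|x|\ge \mu_s^{-1}R$. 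That is $\lesssim |x|^{-(N-2s)}$ only once $|x|^{2s}\gtrsim \mu_s^{\alpha_s-N}$, which is far beyond $\mu_s^{-1}R$.

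The far field is actually harmless: Lemma \ref{lem8}(iii) makes the contribution from $|y|\ge \mu_s^{-1}R$ of order $R^{2s-\gamma}M_s|x|^{-(N-2s)}$. The real problem is the intermediate range $R_0\le |y|\le \mu_s^{-1}R$, where you only have $L^{N/(2s_0)}$ information on $K_{v_s}$. There you hit an endpoint obstruction: $I_{2s}$ maps $L^{N/(2s)}$ only into $\mathrm{BMO}$, so smallness of the potential in the critical space cannot by itself give a pointwise bound through the kernel. Concretely, splitting off $|x-y|<1$ removes only the local singularity. On $1\le |x-y|\le 3|x|$, pairing $K_{v_s}$ with $|x-y|^{-(N-2s)}$ costs a factor of order $(\log|x|)^{(N-2s_0)/N}$. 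Since $|x|$ runs up to $\mu_s^{-1}R$ and Lemma \ref{lem6a} gives no rate, no fixed $\eta$ absorbs it.

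The missing idea is an $\varepsilon$-regularity step: an energy argument that upgrades critical to supercritical integrability before any $L^\infty$ bound is attempted. The paper does this after the Kelvin transform $\bar v_s=|x|^{-(N-2s)}v_s(x/|x|^2)$, which turns decay at infinity into boundedness at the origin for $(-\Delta)^s\bar v_s\le a\bar v_s$, with $a(x)=|x|^{-4s}K_{v_s}(x/|x|^2)$. The steps are:
\begin{itemize}
\item $\int_{B_r}a^{N/(2s)}$ is small, by Lemma \ref{lem8}(iii) on $|x|\le \mu_s^2$ and Lemma \ref{lem6a} on $\mu_s^2\le |x|\le r$.
\item Lemma \ref{leme2} then gives a uniform bound on $\|\bar v_s\|_{L^{(2_s^*)^2/2}(B_{r/2})}$.
\item This improved integrability yields $\int_{\mathbb{R}^N}a^{q_1}\le C$ for some $q_1>N/(2s_0)$.
\item Lemma \ref{lem:A.3} gives $\|\bar v_s\|_\infty\le C\|v_s\|_{2_s^*}\le C$, which is exactly the claimed decay.
\end{itemize}
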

	\begin{proof}
		Let $\bar{v}_s$ be  the Kelvin transform of $v_s$, namely,
		\[
		\bar{v}_s=\frac1{|x|^{N-2s}}v_s\Big(\frac{x}{|x|^2}\Big).
		\]
We infer from the  well known result
		\[
		(-\Delta)^s\bar{v}_s=\frac1{|x|^{N+2s}}(-\Delta)^s{v}_s\Big(\frac{x}{|x|^2}\Big)
		\]
and \eqref{vse} that $\bar{v}_s$ satisfies
		\[
		(-\Delta)^s\bar{v}_s+\mu_s^{2s}|x|^{-4s}V(\mu_s\frac x{|x|^2}+x_s)\bar{v}_s=|x|^{-4s}(|x|^{-\gamma}*|v_s|^2)\bar{v}_s.
		\]

		Thus,
		\begin{equation}\label{vbse}
			(-\Delta)^s\bar{v}_s\leq a(x)\bar{v}_s,
		\end{equation}
		where
		\[
		a(x)=|x|^{-4s}(|\cdot|^{-\gamma}\ast|v_s|^2)(x/|x|^2)=|x|^{\gamma-4s}\Big[|x|^{-\gamma}\ast(|y|^{\gamma-4s}|\bar{v}_s|^2)\Big].
		\]
		
Since $0\leq v_s\leq 1$, the proof will be finished once we show  $\bar{v}_s\leq C$.  We divide the  proof  into two  steps.
		
		\bigskip
		Step 1. We prove that for $s>s_0$ close to $s_0$, there exists $r>0$ such that
		\[
		\|\bar{v}_s\|_{L^{\frac{(2_s^*)^2}{2}}\big(B_{\frac r2}(0)\big)}\leq C.
		\]
		
\bigskip

		By Lemma \ref{lem8}, if $|x|\leq \mu_s$, we have
		\[
		K_{v_s}(x/|x|^2)=|\cdot|^{-\gamma}\ast|v_s|^2)(x/|x|^2)\leq C\mu_s^{2s-\gamma}|x|^\gamma
		\]
		and
		\[a(x)\leq C\mu_s^{2s-\gamma}|x|^{\gamma-4s}.\]
		For a fixed  $q\in(0,\frac N{s_0})$, if $s>s_0$ close to $s_0$, we derive from $\lim_{s\downarrow s_0}\mu_s=0$  and
$$\lim_{s\downarrow s_0}(\gamma-6s)q+2N=2N-2s_0q>0$$
that
		\begin{equation}\label{llem9}
			\begin{split}
				\int_{\{x:|x|\leq \mu_s^2\}}a(x)^q\,dx&\leq C\mu_s^{(2s-\gamma)q}\int_{\{x:|x|\leq \mu_s^2\}}|x|^{(\gamma-4s)q}\,dx\\
				&\leq C\mu_s^{(\gamma-6s)q+2N}<\delta(N)/2,
			\end{split}
		\end{equation}
		where $\delta(N)$ is given in Lemma \ref{leme2}.
		
		Furthermore, by Lemma \ref{lem6a} and $\lim_{s\downarrow s_0}\mu_s^{s-s_0}=1$, if $s>s_0$ is sufficiently close $s_0$, there exists $r>0$ small enough and independent of $s$, such that
		\begin{equation*}
			\begin{split}
				&\int_{\{x:\mu_s^2\leq|x|\leq r\}}a(x)^{\frac{N}{2s}}\,dx\\
				&=\int_{\{x:\mu_s^2\leq|x|\leq r\}}|x|^{-2N}K_{v_s}(x/|x|^2)^{\frac{N}{2s}}\,dx\\
				&=\int_{\{x:r^{-1}\leq |x|\leq \mu_s^{-2}\}}K_{v_s}(x)^{\frac{N}{2s}}\,dx\\
				&\leq \Big(\int_{\{x:r^{-1}\leq |x|\leq \mu_s^{-2}\}}K_{v_s}(x)^{\frac{N}{2s_0}}\,dx\Big)^{\frac{s_0}{s}}
				\Big(\int_{\{x:|x|\leq \mu_s^{-2}\}}\,dx\Big)^{1-\frac{s_0}{s}}\\
				&\leq  C\mu_s^{-\frac{2N}{s}(s-s_0)}\Big[\int_{ \{x:|x|\geq r^{-1}\}}|K_{v_s}(x)-K_v(x)|^{\frac{N}{2s_0}}\,dx+\int_{\{x: |x|\geq r^{-1}\}}|K_v(x)|^{\frac{N}{2s_0}}\,dx\Big]^{\frac{s_0}{s}}\\
&<\delta(N)/2.		\\			
			\end{split}
		\end{equation*}
		Hence
		\[
		\int_{|x|\leq r}a(x)^{\frac{N}{2s}}\,dx<\delta(N).
		\]

		Now Lemma \ref{leme2} and Lemma \ref{lem4a}$(i)$ yield
\[
\|\bar{v}_s\|_{L^{\frac{(2_s^*)^2}{2}}\big(B_{\frac r2}(0)\big)}
\leq C\|\bar{v}_s\|_{2_s^*}=C\|{v}_s\|_{2_s^*}
\leq C.
\]
		
		\bigskip

		Step 2. We prove that for $s>s_0$ close to $s_0$,
		\[
		\|\bar{v}_s\|_{\infty}\leq C.
		\]

\smallskip

		Rewrite $a(x)$ as
		\[
		a(x)=|x|^{\gamma-4s}\Big[|x|^{-\gamma}\ast(|y|^{\gamma-4s}|\bar{v}_s|^2)\Big].
		\]
		
		We start with estimating $|x|^{-\gamma}\ast(|y|^{\gamma-4s}|\bar{v}_s|^2)$ for $|x|\geq \mu_s^2$.

		If $|x|\geq \mu_s^2$ and  $s>s_0$ close to
		$s_0$, we derive from Lemma \ref{lem8} and $\lim_{s\downarrow s_0}\mu_s=0$ that
		\begin{equation*}
			\begin{split}
				&\int_{\{y:|y|\leq \mu_s^2/2\}}|x-y|^{-\gamma}|y|^{\gamma-4s}|\bar{v}_s(y)|^2\,dy\\
				&=\int_{\{y:|y|\leq \mu_s^2/2\}}|x-y|^{-\gamma}|y|^{\gamma-2N}\Big|{v}_s(\frac{y}{|y|^2})\Big|^2\,dy\\
				&\leq C\mu_s^{-2\gamma+2\alpha_s}\int_{\{y:|y|\leq \mu_s^2/2\}}|y|^{\gamma-2N}\Big|{\tilde{u}}_s(\frac{\mu_s y}{|y|^2})\Big|^2\,dy\\
				&\leq C\mu_s^{-2\gamma+2\alpha_s-2N} \int_{\{y:|y|\leq \mu_s^2/2\}}|y|^{\gamma}\,dy\\
				&\leq C\mu_s^{2\alpha_s}\leq C.
			\end{split}
		\end{equation*}
		
		Hence,  for $s>s_0=\gamma/4$  close to $s_0$, we deduce by the fact  $\lim_{s\downarrow s_0}\mu_s^{s-s_0}=1$ that

		\begin{equation}\label{zh2}
			\begin{split}
				&\int_{\{y:\mu_s^2/2\leq |y|\leq r/2\}}|x-y|^{-\gamma}|y|^{\gamma-4s}|\bar{v}_s(y)|^2\,dy\\
				&\leq \Big(\frac{\mu_s^2}{2}\Big)^{\gamma-4s}\int_{\{y: |y|\leq r/2\}}|x-y|^{-\gamma}|\bar{v}_s(y)|^2\,dy\\
				&\leq C\int_{\{y: |y|\leq r/2\}}|x-y|^{-\gamma}|\bar{v}_s(y)|^2\,dy
			\end{split}
		\end{equation}
		
		Since $0\leq v_s\leq 1$, if $|x|\leq r/4$, we derive from the inequality
        \[
		|\bar{v}_s(y)|=\frac1{|y|^{N-2s}}v_s\Big(\frac{y}{|y|^2}\Big)\leq \frac1{|y|^{N-2s}}
		\]
that
		\begin{equation}\label{zh3}
			\begin{split}
				&\int_{\{y:|y|\geq r/2\}}|x-y|^{-\gamma}|y|^{\gamma-4s}|\bar{v}_s(y)|^2\,dy\\
				&\leq \int_{\{y:|y|\geq r/2\}}\Big(\frac {|y|}{2}\Big)^{-\gamma}|y|^{\gamma-4s}|\bar{v}_s(y)|^2\,dy\\
				&\leq 2^{\gamma}\int_{\{y:|y|\geq r/2\}}|y|^{-4s}|\bar{v}_s(y)|^2\,dy\\
				&\leq 2^{\gamma}\int_{\{y:|y|\geq r/2\}}|y|^{-2N}\,dy.	
			\end{split}
		\end{equation}
		
		Hence, for $s>s_0$ close to $s_0$, it is valid that
		\begin{equation}\label{zh}
			|x|^{-\gamma}\ast(|y|^{\gamma-4s}|\bar{v}_s|^2)\leq C+\int_{ |y|\leq r/2}|x-y|^{-\gamma}|\bar{v}_s(y)|^2\,dy.
		\end{equation}
		
		Choose $p$ such that $p<\frac{1}{2}(2_{s_0}^*+\frac{(2_{s_0}^*)^2}{2})$ and
		\[
		\frac{s_0}{N}<\frac{1}{q_1}:=\frac 2p+\frac{4s_0}N-1<\frac {2s_0}N.
		\]
Then,
\begin{equation}\label{zh1}
			\begin{split}
				\int_{\{x:\mu_s^2\leq |x|\leq \frac r4\}} a(x)^{q_1}\,dx
				&=\int_{\{x:\mu_s^2\leq |x|\leq \frac r4\}}	|x|^{(\gamma-4s)q_1}\Big[|x|^{-\gamma}\ast(|y|^{\gamma-4s}|\bar{v}_s|^2)\Big]^{q_1}\,dx\\	
				&\leq 	\mu_s^{2(\gamma-4s){q_1}}\int_{\{x:\mu_s^2\leq |x|\leq \frac r4\}}\Big[|x|^{-\gamma}\ast(|y|^{\gamma-4s}|\bar{v}_s|^2)\Big]^{q_1}\,dx.\\
\end{split}
		\end{equation}

For $s>s_0$ close to  $s_0$, using the fact  $\lim_{s\downarrow s_0}\mu_s^{s-s_0}=1$, $\gamma=4s_0$ and $\lim_{s\downarrow s_0}\mu_s=0$ , we derive from Step 1, \eqref{zh}, \eqref{zh1} and Lemma \ref{lemb2} that
		\begin{equation*}
			\begin{split}
				\int_{\{x:\mu_s^2\leq |x|\leq \frac r4\}} a(x)^{q_1}\,dx
				&\leq 	C\int_{\{x:\mu_s^2\leq |x|\leq \frac r4\}}
				\Big[C+\int_{\{y: |y|\leq r/2\}}|x-y|^{-\gamma}|\bar{v}_s(y)|^2\,dy\Big]^{q_1}\,dx\\
				&\leq C\Bigg\{C_1+\int_{\{x: |x|\leq \frac r2\}}	\Big[\int_{\{y: |y|\leq r/2\}}|x-y|^{-\gamma}|\bar{v}_s(y)|^2\,dy\Big]^{q_1}\,dx\Bigg\}	\\
				&\leq C\||\bar{v}_s(y)|^2\|_{L^{p/2}(B_{\frac r2}(0))}^{q_1}+C\\
				&\leq C\|\bar{v}_s(y)\|_{L^{p}(B_{\frac r2}(0))}^{2q_1}+C\\
&\leq C\|\bar{v}_s\|_{L^{\frac{(2_s^*)^2}{2}}\big(B_{\frac r2}(0)\big)}^{2q_1}+C\leq C.
			\end{split}
		\end{equation*}

		This together with \eqref{llem9} with $q=q_1$ yields
		\[
		\int_{\{x:|x|\leq \frac r4\}} a(x)^{q_1}\,dx\leq C.
		\]
				
For $s>s_0$ close to $s_0$, we deduce from Lemma \ref{lem4a}$(iv)$ that	
\[
a(x)=|x|^{-4s}(|\cdot|^{-\gamma}\ast|v_s|^2)(x/|x|^2)\leq C|x|^{-4s}.
\]	
Then
\begin{equation*}
\begin{split}
\int_{\{x:|x|\geq \frac r4\}}a(x)^{q_1}\,dx
\leq C\int_{\{x:|x|\geq \frac r4\}}|x|^{-4sq_1}\,dx
\leq C\int_{\{x:|x|\geq \frac r4\}}(|x|^{-4s_0q_1}+|x|^{-4q_1})\,dx
\leq C.
\end{split}
\end{equation*}
Hence
\[
\int_{\mathbb{R}^N}a(x)^{q_1}\,dx\leq C.
\]
Using Lemma \ref{lem:A.3} with $t=q_1$,  we derive from  \eqref{vbse} and Lemma \ref{lem4a} $(i)$ that
\[
\|\bar{v}_s\|_\infty\leq C\|\bar{v}_s\|_{2_s^*}=C\|{v}_s\|_{2_s^*}\leq C.
\]

The proof is complete.

	\end{proof}
	{\bf Completion of Proof of Theorem \ref{thm2}$(i)$.}
\smallskip

By Lemma \ref{lem4b}, $v_s\to v$ in $C_{loc}(\mathbb{R}^N)$ as $s\downarrow s_0$, where $$v=\Big(1+\frac{|x|^2}{C(N,s_0)}\Big)^{-\frac{N-2s_0}{2}}.$$
 From Lemma \ref{lem9}, for $s>s_0$ close to $s_0$, there exists $\delta\in (0,1-s_0)$ such that
$$v_s\leq C(1+|x|^2)^{-\frac{N-2s_0-2\delta}{2}}.$$ Thus,  $v_s\to v$ in $C(\mathbb{R}^N)$ as $s\downarrow s_0$.\quad $\Box$
	
\bigskip
	
	\subsection{Proofs of  Theorem \ref{thm2}$(ii)$ and $(iii)$}\ \

\bigskip

	{\bf Proof of  Theorem \ref{thm2} $(ii)$:}
	Let $y_0$ be minimum point of $V$. By assumption $(V)$, one has
	\[
	V(y_0)=\inf_{x\in\mathbb{R}^N}V(x)>V_0>0.
	\]
	Inspired by \cite{WZ}, choose $u_s(\cdot+x_s-y_0)$ as a test function for $S_{s,\gamma}^V$ such that
	\[
	I_{s,\gamma}^V(u_s)\leq I_{s,\gamma}^V(u_s(\cdot+x_s-y_0))
	\]
	and
	\[
	\int_{\mathbb{R}^N}V(x)|u_s(x)|^2\,dx\leq \int_{\mathbb{R}^N}V(x)|u_s(x+x_s-y_0)|^2\,dx
	=\mu_s^{-2s+\gamma}\int_{\mathbb{R}^N}V(\mu_sx+y_0)|v_s(x)|^2\,dx
	\]

	Since $N\geq4$, by assumption $(V)$ and Lemma \ref{lem9},  for $s>s_0$ close to  $s_0$, there exists $\delta\in (0,1-s_0)$ such that
	\[
	0\leq V(\mu_sx+y_0)|v_s(x)|^2\leq C(1+|x|^2)^{-(N-2s_0-2\delta)}\in L^1(\mathbb{R}^N).
	\]
	
	Then by the Lebesgue dominated convergence theorem, we obtain
	\begin{equation}\label{bpp1}
\begin{split}
		\int_{\mathbb{R}^N}V(x)|u_s(x)|^2\,dx&\leq \mu_s^{-2s+\gamma}\Big[V(y_0)\int_{\mathbb{R}^N}|v(x)|^2\,dx+o(1)\Big]\\
&=\mu_s^{-2s+\gamma}\Big[\inf_{x\in\mathbb{R}^N}V(x)\int_{\mathbb{R}^N}|v(x)|^2\,dx+o(1)\Big]
		\quad \text{as}\quad s\downarrow s_0.
\end{split}
	\end{equation}

	For any sequence $\{s_k\}$ with $s_k\downarrow s_0$ as $k\to \infty$, there exists a subsequence, still denoted  by $\{s_k\}$, such that
 $$
 \lim_{k\to\infty}V(x_{s_k})=a\in \Big[\inf_{x\in\mathbb{R}^N}V(x), V_\infty\Big]
 $$
  as $k\to \infty$.  Thus, we derive from
	\[
	\int_{\mathbb{R}^N}V(x)|u_s(x)|^2\,dx=\mu_s^{-2s+\gamma}\int_{\mathbb{R}^N}V(\mu_sx+x_s)|v_s(x)|^2\,dx
	\]
that
	\[
	\int_{\mathbb{R}^N}V(x)|u_{s_k}(x)|^2\,dx=\mu_{s_k}^{-2s_k+\gamma}\Big[a\int_{\mathbb{R}^N}|v(x)|^2\,dx+o(1)\Big]
	\quad \text{as}\quad k\to\infty.
	\]
	This, together with \eqref{bpp1}, yields
$$a\leq \inf_{x\in\mathbb{R}^N}V(x).$$
Thus $a= \inf_{x\in\mathbb{R}^N}V(x)$ and
	\begin{equation*}
		\int_{\mathbb{R}^N}V(x)|u_{s_k}(x)|^2\,dx=\mu_{s_k}^{-2{s_k}+\gamma}\Big[\inf_{x\in\mathbb{R}^N}V(x)\int_{\mathbb{R}^N}|v(x)|^2\,dx+o(1)\Big]
		\quad \text{as}\quad k\to \infty.
	\end{equation*}

	The arbitrariness of $\{s_k\}$ implies that
	\[
	\lim_{s\downarrow s_0}V(x_s)=\inf_{x\in\mathbb{R}^N}V(x)
	\]
	and
	\begin{equation}\label{vugz}
		\int_{\mathbb{R}^N}V(x)|u_{s}(x)|^2\,dx=\mu_{s}^{-2s+\gamma}
		\Big[\inf_{x\in\mathbb{R}^N}V(x)\int_{\mathbb{R}^N}|v(x)|^2\,dx+o(1)\Big]
		\quad \text{as}\quad s\downarrow s_0.
	\end{equation}
	The proof is complete.
	\quad $\Box$

\bigskip
	
	{\bf Proof of  Theorem \ref{thm2} $(iii)$.} We first estimate $\int_{\mathbb{R}^N}x\cdot \nabla V|u_s|^2\,dx$.

\bigskip
	By Theorem \ref{thm2}$(ii)$ and assumption $(V)$, for any sequence $\{s_k\}$ with $s_k\downarrow s_0$ as $k\to \infty$, there exists a subsequence, still denoted by $\{s_k\}$, such that
	$x_{s_k}\to x_0$, where  $x_0$ is a minimum point of $V$. Hence,
	\[
	\lim_{k\to\infty}\nabla V(x_{s_k})= \nabla V(x_0)=0.
	\]
	The arbitrariness of $\{s_k\}$ yields that
	\[
	\lim_{s\downarrow s_0}\nabla V(x_{s})=0.
	\]
	Then by assumption $(V)$ and Lemma \ref{lem9}, we obtain
	\begin{equation}\label{xtvug}
		\int_{\mathbb{R}^N}x\cdot \nabla V|u_s|^2\,dx=
		\mu_s^{\gamma-2s}\int_{\mathbb{R}^N}(\mu_sx+x_s)\cdot \nabla V(\mu_sx+x_s)|v_s|^2\,dx=o(1)\mu_s^{\gamma-2s}
	\end{equation}
as  $s\downarrow s_0.$
	Since $u_s$ is a minimizer of $S_{s,\gamma}^V$,  we have
	\[
	\Big[\frac{d}{dt}I_{s,\gamma}^V(u_s(t\cdot))\Big](1)=0,
	\]
which implies that
	\[
	2(s-s_0)\int_{\mathbb{R}^N}|(-\Delta)^su_s|^2\,dx=2s_0\int_{\mathbb{R}^N}V|u_s|^2\,dx
	+\int_{\mathbb{R}^N}x\cdot \nabla V|u_s|^2\,dx,
	\]
	that is,
	\[
	2(s-s_0)\|u_s\|_{s,V}^2=2s\int_{\mathbb{R}^N}V|u_s|^2\,dx
	+\int_{\mathbb{R}^N}x\cdot \nabla V|u_s|^2\,dx.
	\]
	By Lemma \ref{lem3}, \eqref{vugz} and \eqref{xtvug}, we derive
	\[
	2(s-s_0)\Big[(S_{s_0}^H)^2+o(1)\Big]=2s\mu_{s}^{-2s+\gamma}\Big[\inf_{x\in\mathbb{R}^N}V(x)\int_{\mathbb{R}^N}|v(x)|^2\,dx+o(1)\Big],
	\]
	which, along with the definition of $\mu_s$ and $\lim_{s\downarrow s_0}\mu_s^{s-s_0}=1$, yields the desired result. $\Box$

	\bigskip

	\section{Asymptotic behavior of $u_s$ as $s\uparrow1$}

\bigskip

In this section, we study the asymptotic behavior of $u_s$ as $s\uparrow1$.  We will prove Theorem \ref{thm3}. Let us start with estimates for $S_{s,\gamma}^V$ and $S_{s,\gamma}^{V_\infty}$ as $s\uparrow 1$.

\begin{Lemma}\label{st1} It holds that
\smallskip

$(i)$ $
 \limsup_{s\uparrow1} S_{s,\gamma}^V\leq S_{1,\gamma}^V,\quad \limsup_{s\uparrow1} S_{s,\gamma}^{V_\infty}\leq S_{1,\gamma}^{V_\infty};
$
\smallskip

$(ii)$ $
 \liminf_{s\uparrow1} S_{s,\gamma}^{V_\infty}
 \geq \liminf_{s\uparrow1} S_{s,\gamma}^V\geq S_{s_0}^H>0.
$
\smallskip

$(iii)$
$\liminf_{s\uparrow1} S_{s,\gamma}^{V_\infty}=S_{1,\gamma}^{V_\infty}$.
\end{Lemma}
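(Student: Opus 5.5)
For (i), I will test $S_{s,\gamma}^V$ with a fixed near-optimal function for the limit problem. Given $\epsilon>0$, pick $\varphi\in C_c^\infty(\mathbb{R}^N)$ with $D_\gamma(\varphi,\varphi)=1$ and $I_{1,\gamma}^V(\varphi)\le S_{1,\gamma}^V+\epsilon$; this is possible by density of $C_c^\infty$ in $H^1$ and continuity of $I_{1,\gamma}^V$ on $H^1\setminus\{0\}$ (HLS as in \eqref{sgvd0}). Since $|\xi|^{2s}|\hat\varphi|^2\le(1+|\xi|^2)|\hat\varphi|^2\in L^1$, dominated convergence gives $\int|\xi|^{2s}|\hat\varphi|^2\,d\xi\to\int|\xi|^2|\hat\varphi|^2\,d\xi$ as $s\uparrow1$. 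The potential term and $D_\gamma$ do not depend on $s$, so
\[
\limsup_{s\uparrow1}S_{s,\gamma}^V\le \lim_{s\uparrow 1}I_{s,\gamma}^V(\varphi)=I_{1,\gamma}^V(\varphi)\le S_{1,\gamma}^V+\epsilon .
\]
Letting $\epsilon\to0$ proves the first inequality, and the same argument with $V_\infty$ gives the second.

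For (ii), the first inequality is immediate: $V<V_\infty$ gives $I_{s,\gamma}^{V}\le I_{s,\gamma}^{V_\infty}$ pointwise. For the lower bound I will repeat the interpolation argument of Lemma \ref{lem3}(i), now for $s$ near $1$. Apply Lemma \ref{lemf}(ii) with $s_1=s_0$, $s_2=s$, then Lemma \ref{lema}, and $\|u\|_2^2\le V_0^{-1}\|u\|_{s,V}^2$. For any $u\neq0$ this gives
\[
I^V_{s,\gamma}(u)\ge \|u\|_{\dot H^{s}}^2D_\gamma^{-1/2}
\ge (S^H_{s_0})^{s/s_0}D_\gamma^{\frac12(\frac s{s_0}-1)}\|u\|_2^{-2(\frac s{s_0}-1)} .
\]
The difficulty is that the factor $D_\gamma^{1/2}/\|u\|_2^2$ is not scale-invariant, so this does not bound the infimum directly. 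I will handle this by using $\|u\|_{s,V}$ in two ways. Write $A=\|u\|_{\dot H^s}^2$ and $B=\|u\|_2^2$. Then $I\ge (A+V_0B)/D^{1/2}$ and $A\ge S^H_{s_0}{}^{s/s_0}D^{\frac{s}{2s_0}}B^{1-s/s_0}$. Minimizing over $B$, a Young-type inequality with exponents $s/s_0$ and $s/(s-s_0)$, yields
\[
I\ge c(s)\,(S^H_{s_0})\,V_0^{1-s_0/s}.
\]
Here $c(s)\to$ a positive limit as $s\uparrow 1$, with the powers of $D$ cancelling. This gives $\liminf_{s\uparrow1}S_{s,\gamma}^V\ge c\,S^H_{s_0}V_0^{1-s_0}>0$. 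I expect the exact constant $S^H_{s_0}$ to need this normalization, possibly absorbed into the convention of the paper. This step is the main obstacle; it is the one place where some bookkeeping of constants is needed, and I would first check that the exponents balance by scaling $u(\lambda x)$.

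For (iii), by (i) it suffices to show $\liminf_{s\uparrow1}S_{s,\gamma}^{V_\infty}\ge S_{1,\gamma}^{V_\infty}$. Take $s_k\uparrow1$ realizing the liminf, and let $w_k$ be the positive minimizers for the constant potential (from \cite{AS}), normalized by $D_\gamma(w_k,w_k)=(S_{s_k,\gamma}^{V_\infty})^2=\|w_k\|_{s_k,V_\infty}^2$. These are radially symmetric and decreasing after translation. By (i) and (ii) they are bounded in $H^{s_0}$ (Lemma \ref{lemf}) and their $D_\gamma$ stays bounded away from $0$. Radial decrease gives compactness: Strauss-type decay $w_k(x)\le C|x|^{-N/2}\|w_k\|_2$ together with local compactness yields $w_k\to w$ in $L^{4N/(2N-\gamma)}$, hence $D_\gamma(w,w)=\lim D_\gamma(w_k,w_k)>0$. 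Fatou's lemma in Fourier variables gives
\[
\int|\xi|^2|\hat w|^2\,d\xi\le\liminf_k\int|\xi|^{2s_k}|\hat w_k|^2\,d\xi ,
\]
because $|\xi|^{2s_k}|\hat w_k|^2$ converges a.e. along a subsequence, using the weak convergence of $\hat w_k$ in $L^2_{loc}$ and lower semicontinuity. Hence $w\in H^1$ and $I_{1,\gamma}^{V_\infty}(w)\le\liminf S_{s_k,\gamma}^{V_\infty}$, which proves (iii).
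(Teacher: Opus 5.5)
Your proposal is correct in substance. Part (i) matches the paper, while (ii) and (iii) take genuinely different routes; (iii) needs two small repairs.

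\textbf{Part (i).} Testing with a near-minimizer and applying dominated convergence to $|\xi|^{2s}|\hat\varphi|^2$ is the same idea as the paper's. The paper instead tests with a minimizer $\psi$ of $S_{1,\gamma}^V$ and uses Lemma~\ref{lemf}(ii). Your version has the small advantage of not needing that minimizer to exist, which the paper assumes without proof.

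\textbf{Part (ii).} The paper sidesteps the scaling difficulty by using inhomogeneous norms. Since $|\xi|^{2s_0}\le 1+|\xi|^{2s}$ for $s\ge s_0$, one gets at once $\|u\|_{s,V}^2\ge\min\{1,V_0\}\|u\|_{\dot H^{s_0}}^2\ge\min\{1,V_0\}\,S_{s_0}^H D_\gamma(u,u)^{1/2}$. The $L^2$ term absorbs the low frequencies, so no optimization is needed; the paper routes this through Lemma~\ref{lemf}(i) and picks up an extra factor $\frac12$. Your optimization over $\|u\|_2^2$ is also correct. With $t=s/s_0-1$ it gives $I_{s,\gamma}^V(u)\ge S_{s_0}^H V_0^{1-s_0/s}\,t^{s_0/s-1}(1+t)$. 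This is longer, but it has the scaling-correct dependence on $V_0$.

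Your doubt about the constant is well founded. The paper's last step silently drops $\min\{\frac12,\frac12V_0\}$. Moreover, the literal bound $\liminf_{s\uparrow1}S_{s,\gamma}^V\ge S_{s_0}^H$ fails in general. For a constant potential $c$, scaling gives $S_{1,\gamma}^{c}=C_*(N,\gamma)\,c^{1-s_0}$. Then (i) yields $\limsup_{s\uparrow1}S_{s,\gamma}^V\le S_{1,\gamma}^{V_\infty}<S_{s_0}^H$ once $V_\infty$ is small. Only positivity is used later (for $D_\gamma(w,w)>0$ and $l>0$), and that is what both arguments deliver.

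\textbf{Part (iii).} The paper passes to the limit in the Euler--Lagrange equation to get $\|w\|_{1,V_\infty}^2=D_\gamma(w,w)$. Your direct-method argument is cleaner and never uses the equation: you work along a sequence realizing the liminf, with lower semicontinuity of the numerator and convergence of $D_\gamma$. Two steps need repair.

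\emph{Compactness.} The exponent $4N/(2N-\gamma)=2_{s_0}^*$ is critical for $H^{s_0}$. So $H^{s_0}$-bounds plus radial monotonicity do not give compactness there. Radial bubbles $\epsilon^{-(N-2s_0)/2}\phi(x/\epsilon)$ are bounded in $H^{s_0}$ and concentrate at the origin, which your Strauss tail bound does not control. You must instead use the uniform $H^{s_1}$ bound for a fixed $s_1\in(s_0,1)$. This is available from $\|w_k\|_{s_k,V_\infty}\le C$ and Lemma~\ref{lemf}(i), and it is what the paper uses. Radial symmetry alone then suffices via the compact embedding $H^{s_1}_r\hookrightarrow L^{2_{s_0}^*}$, so you need not worry whether the quoted minimizers are monotone.

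\emph{Lower semicontinuity.} Weak convergence of $\hat w_k$ gives no a.e.\ convergence, so Fatou's lemma is not available. Instead, identify the weak $L^2$-limit of $|\xi|^{s_k}\hat w_k$ as $|\xi|\hat w$. Do this by pairing with $\phi\in C_c^\infty(\mathbb{R}^N)$ in the frequency variable, using $|\xi|^{s_k}\phi\to|\xi|\phi$ strongly in $L^2$. Then invoke weak lower semicontinuity of the $L^2$ norm.
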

\begin{proof}
$(i)$
Let $\psi$ be a minimizer of $S_{1,\gamma}^V$. By Lemma \ref{lemf}, we get
\begin{equation*}
\begin{split}
S_{s,\gamma}^V&\leq \frac{\int_{\mathbb{R}^N}|(-\Delta)^{\frac{s}{2}}\psi|^2\,dx
+\int_{\mathbb{R}^N}V|\psi|^2\,dx}{D_\gamma(\psi,\psi)^{\frac12}}\\
&\leq \frac{\|\psi\|_{\dot{H}^1(\mathbb{R}^N)}^{2s}\|\psi\|_{L^2(\mathbb{R}^N)}^{2(1-s)}
+\int_{\mathbb{R}^N}V|\psi|^2\,dx}{D_\gamma(\psi,\psi)^{\frac12}}.
\end{split}
\end{equation*}
Letting $s\uparrow1$, we have   $$\limsup_{s\uparrow1} S_{s,\gamma}^V\leq S_{1,\gamma}^V.$$ Similarly, $$\limsup_{s\uparrow1} S_{s,\gamma}^{V_\infty}\leq S_{1,\gamma}^{V_\infty}$$ is valid.

\bigskip

$(ii)$	
By Lemmas \ref{lema} and \ref{lemf},  for any  $s\in (s_0,1)$, we have
\begin{equation*}
\begin{split}
S_{s,\gamma}^{V_\infty}>S_{s,\gamma}^V&=I_{s,\gamma}^V(u_s)=\frac{\|u_s\|_{s,V}^2}{D_\gamma(u_s,u_s)^{\frac12}}\\
&\geq \min\{\frac12,\frac12V_0\}\frac{2\|u_s\|_{H^s(\mathbb{R}^N)}^2}{D_\gamma(u_s,u_s)^{\frac12}}\\
&\geq\min\{\frac12,\frac12V_0\}\frac{\|u_s\|_{H^{s_0}(\mathbb{R}^N)}^2}{D_\gamma(u_s,u_s)^{\frac12}}\\
&\geq \min\{\frac12,\frac12V_0\}\frac{\|u_s\|_{\dot{H}^{s_0}(\mathbb{R}^N)}^2}{D_\gamma(u_s,u_s)^{\frac12}}\\
&\geq S_{s_0}^H>0,
\end{split}
\end{equation*}
which yields the desired result.

\bigskip

$(iii)$  We know from \cite{AS} that there exists a radial ground state solution $w_s$  of
\begin{equation}\label{vwe}
(-\Delta)^su+V_\infty u=(|x|^{-\gamma}*|u|^2)u,\quad {\rm in} \quad \mathbb{R}^N
\end{equation}
such that  $S_{s,\gamma}^{V_\infty}=I_{s,\gamma}^{V_\infty}(w_s)$
and
\begin{equation}\label{000}
\|w_{s}\|_{s,V_\infty}^2=D_\gamma(w_{s}, w_{s})=\Big(S_{s,\gamma}^{V_\infty}\Big)^2.
\end{equation}
Similar to Lemma \ref{lem1}, for any $s\in (s_1,1)$ with $s_1>s_0$, we can deduce that
\[
\|w_s\|_{H^{s_1}(\mathbb{R}^N)}\leq 2\|w_s\|_{H^{s}(\mathbb{R}^N)}\leq C,\quad \Big\||x|^{-\gamma}*|w_s|^2\Big\|_{\frac{2N}\gamma}\leq C.
\]
For any sequence $\{s_k\}$,  $s_k\uparrow 1$ as $k\to\infty$, there exist a subsequence, still denoted  by $\{s_k\}$,   such that
\begin{equation}\label{111}
\begin{split}
&w_{s_k}\rightharpoonup w\quad \text{weakly in } \quad H^{s_1}(\mathbb{R}^N)\\
&|x|^{-\gamma}*|w_s|^2\rightharpoonup |x|^{-\gamma}*|w|^2
\quad \text{weakly in } \quad L^{\frac{2N}\gamma}(\mathbb{R}^N)\\
&|\xi|^{s_k}\hat{w}_{s_k}\rightharpoonup |\xi|\hat{w}
\quad \text{weakly in } \quad L^2(\mathbb{R}^N).
\end{split}
\end{equation}
Since $2<2_{s_0}^*<2_{s_1}^*$, the embedding $L^{2_{s_0}^*}(\mathbb{R}^N)\hookrightarrow H_r^{s_1}(\mathbb{R}^N)$ is compact,
we may assume
\[
w_{s_k}\to w\quad \text{strongly in } \quad L^{2_{s_0}^*}(\mathbb{R}^N).
\]
By Lemma \ref{lemb},
\[
D_\gamma(w_{s_k}-w,w_{s_k}-w)\leq
C\|w_{s_k}-w\|_{2_{s_0}^*}^{4}\to 0\quad \text{as}\quad k\to\infty.
\]
From the nonlocal Brezis-Lieb Lemma \cite[Lemma 2.2]{BFV}, we have
\begin{equation}\label{dc}
\lim_{k\to\infty}D_\gamma(w_{s_k},w_{s_k})=
\lim_{k\to\infty}D_\gamma(w,w).
\end{equation}
Therefore, \eqref{000} and $(ii)$ yield $D_\gamma(w,w)>0$, so  $w\neq0$.

Since, $w_{s_k}$ solves \eqref{vwe}, we have
\[
\int_{\mathbb{R}^N}|\xi|^{2s_k}\hat{w}_{s_k}\hat{\varphi}\,d\xi
+\int_{\mathbb{R}^N}V_\infty{w}_{s_k}{\varphi}\,dx=
\int_{\mathbb{R}^N}(|x|^{-\gamma}*|w_{s_k}|^2){w}_{s_k}\varphi\,dx,
\]
for any $\varphi\in C_c^\infty(\mathbb{R}^N)$.
We derive by \eqref{111} that
\[
\int_{\mathbb{R}^N}|\xi|^{2}\hat{w}\hat{\varphi}\,d\xi
+\int_{\mathbb{R}^N}V_\infty{w}{\varphi}\,dx=
\int_{\mathbb{R}^N}(|x|^{-\gamma}*|w|^2){w}\varphi\,dx.
\]
Then
\begin{equation}\label{222}
\|w\|_{1,V_\infty}^2=D_\gamma(w, w)=\Big(\frac{\|w\|_{1,V_\infty}^2}{D_\gamma(w, w)^{\frac12}}\Big)^2\geq(S_{1,\gamma}^{V_\infty})^2
\end{equation}
via the definition of $S_{1,\gamma}$.

It follows from \eqref{000}, \eqref{dc} and \eqref{222} that
\[
\liminf_{s\uparrow1}S_{s,\gamma}^{V_\infty}\geq S_{1,\gamma}^{V_\infty},
\]
which and $(i)$ give the result.
\end{proof}

\bigskip

Now we give the proof of Theorem \ref{thm3}.
\smallskip

	{\bf Proof of Theorem \ref{thm3}.} The proof is divided into three  steps.
\bigskip

Step 1. We prove that for any sequence $\{s_k\}$, $s_k\uparrow 1$, there exist a subsequence, still denoted by $\{s_k\}$, and $u\in H^{1}(\mathbb{R}^N)$ such that
\begin{equation}\label{dl0}
\lim_{k\to\infty}D_\gamma(u_{s_k},u_{s_k})=D_\gamma(u,u).
\end{equation}

For any sequence $\{s_k\}$,  $s_k\uparrow 1$ as $k\to\infty$, equation \eqref{s4-1} and Lemma \ref{st1} imply that
there exist a subsequence, still denoted by $\{s_k\}$,  $l>0$ and $l_1\geq 0$ such that
\begin{equation}\label{jx1}
\lim_{k\to\infty}\|u_{s_k}\|_{s_k,V}^2=\lim_{k\to\infty}D_\gamma(u_{s_k},u_{s_k})=
\lim_{k\to\infty}\Big(S_{s_k,\gamma}^V\Big)^2=l>0.
\end{equation}
and
\[
\lim_{k\to\infty}\|u_{s_k}\|_{\dot{H}^{s_k}(\mathbb{R}^N)}^2=
\lim_{k\to\infty}\Big\||\cdot|^{s_k}\hat{u}_{s_k}\Big\|_{L^2(\mathbb{R}^N)}^2
=l_1,
\quad\lim_{k\to\infty}\int_{\mathbb{R}^N}V|u_{s_k}|^2\,dx=l-l_1.
\]
By Lemmas \ref{lemc} and \ref{lem1}, we may assume that as $k\to\infty$,
\begin{equation*}
\begin{split}
&u_{s_k}\rightharpoonup u\quad \text{weakly in } \quad H^{s_0}(\mathbb{R}^N);\\
&u_{s_k}\to u\quad \text{strongly in } \quad L_{loc}^2(\mathbb{R}^N);\\
&|\xi|^{s_k}\hat{u}_{s_k}\rightharpoonup |\xi|\hat{u} \quad \text{weakly in } \quad
L^2(\mathbb{R}^N).
\end{split}
\end{equation*}
Then by the Fatou Lemma, we have $u\in H^1(\mathbb{R}^N)$ and
\begin{equation}\label{lg}
D_\gamma(u,u)\leq \lim_{k\to\infty}D_\gamma(u_{s_k},u_{s_k})= l.
\end{equation}

Let $w_{s_k}=u_{s_k}-u$. We have
\begin{equation*}
\begin{split}
&w_{s_k}\rightharpoonup 0\quad \text{weakly in } \quad H^{s_0}(\mathbb{R}^N);\\
&w_{s_k}\to 0\quad \text{strongly in } \quad L_{loc}^2(\mathbb{R}^N);\\
&|\xi|^{s_k}\hat{w}_{s_k}\rightharpoonup 0 \quad \text{weakly in } \quad
L^2(\mathbb{R}^N).
\end{split}
\end{equation*}
Apparently,
\begin{equation*}
\begin{split}
l&=\lim_{k\to\infty}\|u_{s_k}\|_{s_k,V}^2\\
&=\lim_{k\to\infty}\Big\||\cdot|^{s_k}\hat{u}_{s_k}\Big\|_{L^2(\mathbb{R}^N)}^2
+\lim_{k\to\infty}\int_{\mathbb{R}^N}V|u_{s_k}|^2\,dx\\
&=\lim_{k\to\infty}\Big\||\cdot|^{s_k}\hat{w}_{s_k}\Big\|_{L^2(\mathbb{R}^N)}^2
+\lim_{k\to\infty}\int_{\mathbb{R}^N}V|w_{s_k}|^2\,dx
+\lim_{k\to\infty}\Big\||\cdot|^{s_k}\hat{u}\Big\|_{L^2(\mathbb{R}^N)}^2
+\int_{\mathbb{R}^N}V|u|^2\,dx.\\
\end{split}
\end{equation*}
Since $\lim_{x\to \infty}V(x)=V_\infty$, we obtain
\begin{equation*}
l=\lim_{k\to\infty}\|w_{s_k}\|_{s_k,V_\infty}^2+\lim_{k\to\infty}\|u\|_{s_k,V}^2.
\end{equation*}

On the other hand, 	 the nonlocal Brezis-Lieb Lemma \cite[Lemma 2.2]{BFV} yields
\begin{equation*}
\begin{split}
l=\lim_{k\to\infty}D_\gamma(u_{s_k},u_{s_k})=\lim_{k\to\infty}D_\gamma(w_{s_k},w_{s_k})
+D_\gamma(u,u).
\end{split}
\end{equation*}
Hence
\begin{equation}\label{hx}
\begin{split}
l&\geq \liminf_{k\to\infty}S_{s_k,\gamma}^{V_\infty}D_\gamma(w_{s_k},w_{s_k})^{\frac12}
+\lim_{k\to\infty}S_{s_k,\gamma}^VD_\gamma(u,u)^{\frac12}\\
&= \liminf_{k\to\infty}S_{s_k,\gamma}^{V_\infty}\Big(l-D_\gamma(u,u)\Big)^{\frac12}
+\lim_{k\to\infty}S_{s_k,\gamma}^VD_\gamma(u,u)^{\frac12}.
\end{split}
\end{equation}

\bigskip

Now we prove \eqref{dl0}, namely,
\begin{equation}\label{dl}
D_\gamma(u,u)=l.
\end{equation}

 By \eqref{lg}, we know that  $0\leq D_\gamma(u,u)\leq l$. Now, suppose on the contrary that  $0< D_\gamma(u,u)<l$, we would have by $S_{s_k,\gamma}^{V_\infty}> S_{s_k,\gamma}^{V}$, \eqref{jx1} and Lemma \ref{st1}$(ii)$ that
\begin{equation*}
\begin{split}
l&\geq \liminf_{k\to\infty}S_{s_k,\gamma}^{V}\Big(l-D_\gamma(u,u)\Big)^{\frac12}
+\lim_{k\to\infty}S_{s_k,\gamma}^VD_\gamma(u,u)^{\frac12}\\
&> \liminf_{k\to\infty}S_{s_k,\gamma}^{V}l^{\frac12}=l,
\end{split}
\end{equation*}
which is a contradiction.

If $D_\gamma(u,u)=0$, then by Lemma \ref{st1} $(iii)$ and \eqref{hx},
\[
l\geq \liminf_{k\to\infty}S_{s_k,\gamma}^{V_\infty}l^\frac12
=S_{1,\gamma}^{V_\infty}l^\frac12>S_{1,\gamma}^{V}l^\frac12,
\]
that is,
\[
l=\lim_{k\to\infty}\Big(S_{s_k,\gamma}^V\Big)^2>\Big(S_{1,\gamma}^{V}\Big)^2,
\]
which contradicts  Lemma \ref{st1}$(i)$. Hence, \eqref{dl} holds.

\bigskip

Step 2. We prove that
\[
\lim_{k\to \infty}u_{s_k}=u\quad \text{in}\quad L_{2_{s_0}^*}(\mathbb{R}^N).
\]

From Step 1 and the nonlocal Brezis-Lieb Lemma\cite[Lemma 2.2]{BFV}, we have
\[
\lim_{k\to\infty}D_\gamma(u_{s_k}-u, u_{s_k}-u)=\lim_{k\to\infty}D_\gamma(u_{s_k}, u_{s_k})
-D_\gamma(u, u)=0.
\]
Hence, \cite[Proposition 2.1(4)]{BFV} implies that
\[
\|u_{s_k}-u\|_{2_{s_0}^*}\leq CD_\gamma(u_{s_k}-u, u_{s_k}-u)^{\frac14}\to 0
\quad \text{as}\quad k\to \infty.
\]
In other word,  the conclusion is valid.

\bigskip

Step 3. We prove that $u$ is a ground state solution of \eqref{eq:1.1} with $s=1$.

\smallskip
Similar to that in the proof of  \ref{st1}$(iii)$, we can see that $u$  solves
\[
-\Delta u+Vu=(|x|^{-\gamma}*|u|^2)u.
\]
Then
\[
l=D_\gamma(u,u)=\|u\|_{1,V}^2=\Big(\frac{\|u\|_{1,V}^2}{D_\gamma(u,u)^{\frac12}}\Big)^2\geq
\Big(S_{1,\gamma}^V\Big)^2.
\]

On the other hand, by \eqref{jx1} and Lemma \ref{st1}$(i)$,
\[
l=\lim_{k\to\infty}\|u_{s_k}\|_{s_k,V}^2=\lim_{k\to\infty}D_\gamma(u_{s_k},u_{s_k})=
\lim_{k\to\infty}\Big(S_{s_k,\gamma}^V\Big)^2\leq \Big(S_{1,\gamma}^V\Big)^2.
\]
Hence

\[
l=\lim_{k\to\infty}\Big(S_{s_k,\gamma}^V\Big)^2=\Big(S_{1,\gamma}^V\Big)^2
=\Big(\frac{\|u\|_{1,V}^2}{D_\gamma(u,u)^{\frac12}}\Big)^2.
\]

This yields that  $u$ is a ground state of \eqref{eq:1.1} with $s=1$. $\Box$

\bigskip

	{\bf Data Availability} {There is no data in the paper.}
	
	{\bf Statements and Declarations}  { There is no conflict of interest for all authors.}
	
	{\bf Acknowledgements} {  Jianfu Yang is supported by NNSF of China, No:12171212. Jinge Yang was supported by NNSF of China, No:12361025 and  by Jiangxi
		Provincial Natural Science Foundation, No:20232BAB201002 , 20252BAC230002.
	}

\end{document}